\newtheorem{theorem}{Theorem}[section]
\newtheorem{lemma}{Lemma}[section]
\newtheorem{corollary}{Corollary}[section]
\newtheorem{definition}{Definition}[section]
\newtheorem{remark}{Remark}[section]
\newcommand{\be}{\begin{equation}}
\newcommand{\ee}{\end{equation}}
\newcommand{\ba}{\begin{align}}
\newcommand{\ea}{\end{align}}
\numberwithin{equation}{section}
\begin{document}

\baselineskip =1.4\baselineskip
\thanks{*Corresponding author}
\subjclass[2010]{35Q55; 35Q40; 35Q51}
 \keywords{Schr\"{o}dinger-Bopp-Podolsky system; Ground state solutions; Variational method; Radial symmetry.}
\thanks{The project is supported by the National Natural Science Foundation of China
(Grant no.12171343 and no.12571183) and Sichuan Science and Technology Program(no.2022ZYD0009 and no.2022JDTD0019).}

\author{Sheng Wang} \address[Sheng Wang]
{School of Mathematical Science \\  Sichuan Normal University   \\
 Chengdu, Sichuan 610066, P.R. China}
   \email[]{wangsmath@163.com}

\author{Juan Huang$^{\ast}$} \address[Juan Huang]
{School of Mathematical Science \\  Sichuan Normal University   \\
 Chengdu, Sichuan 610066, P.R. China}
   \email[]{hjmath@163.com}

\title[Existence and qualitative properties of ground state solutions for the Schr\"{o}dinger-Bopp-Podolsky system] {Existence and qualitative properties of ground state solutions for the Schr\"{o}dinger-Bopp-Podolsky system}

\pagestyle{plain}

\begin{abstract}
{This paper concerns the existence and related properties of solutions to the Schr\"{o}dinger-Bopp-Podolsky system, which reduces to a nonlinear and nonlocal partial differential equation describing a Schr\"{o}dinger field coupled with its electromagnetic field in Bopp-Podolsky theory under purely electrostatic conditions. Firstly, by applying the mountain-pass lemma, we obtain the existence of nontrivial solutions. Then, through some estimates of the ground state energy, we prove the existence of ground state solutions. By exploring the relationship between solutions and paths associated with critical points, we further demonstrate that the obtained solutions are ground states of mountain-pass type. Additionally, the positivity, radial symmetry, rotational invariance, and exponential decay of the ground state solutions are considered. Finally, in the radial case, we explore the asymptotic behavior of the obtained solutions with respect to $a$.}
\end{abstract}

\maketitle
\tableofcontents

\renewcommand{\theequation}
{\thesection.\arabic{equation}}
\setcounter{equation}{0}
\section{Introduction} \noindent

In this paper, we are interested in the following Schr\"{o}dinger-Bopp-Podolsky system:
\begin{equation}\label{main}
\left\{
\begin{aligned}
&-\Delta u+\omega u+\lambda q^2\phi u=|u|^{p-2}u \quad \hbox{in}~~\mathbb{R}^3,\\
&-\Delta\phi+a^2\Delta^2\phi=4\pi |u|^2 \quad \hbox{in}~~\mathbb{R}^3,
\end{aligned}
\right.
\end{equation}
where $u: \mathbb{R}^3\rightarrow\mathbb{C}$, $\phi: \mathbb{R}^3\rightarrow\mathbb{R}$, $\omega\in\mathbb{R}$, $\lambda\in\mathbb{R}$, $q\neq 0$, $a>0$ and $p\in(2, 6]$. In system (\ref{main}), from the perspective of physics, $|u|$ represents the modulus of the wave function, and $\phi$ denotes the electrostatic potential. As we all know, the Bopp-Podolsky theory, a second-order gauge theory of the electromagnetic field, was developed independently by Bopp \cite{Bopp1940} and then by Podolsky \cite{Podolsky1942}. According to the Mie theory \cite{Mie1913} and its generalizations proposed by Born and Infeld \cite{Born1933, Born1934, BornI1933, BornI1934}, the Bopp-Podolsky theory was introduced to solve the so-called infinity problem in classical Maxwell theory. In addition, the Bopp-Podolsky theory can be regarded as an effective theory for short distances, while at large distances it is experimentally indistinguishable from the Maxwell theory, see \cite{Aberqi2022, Farid2023, Frenkel1996}.

In recent years, variational methods have become a pivotal tool for probing the existence of nontrivial solutions to system (\ref{main}) under the condition $\lambda>0$. When examining $\omega\in\mathbb{R}$ in system (\ref{main}), two distinct stances emerge: $\omega$ can either be regarded as a pre-defined constant or remain an unknown quantity.
For the first case, d'Avenia and Siciliano \cite{dAvenia2019} showed that system (\ref{main}) possesses nontrivial positive solutions by using the splitting lemma and the monotonicity trick, where $p\in(2, 6)$ with $|q|$ sufficiently small, or $p\in(3, 6)$ and $q\neq0$. Alongside this, they investigated the asymptotic behavior of solutions as $a\rightarrow0$. Moreover, by applying the Pohozaev identity, they were able to conclude that the system does not admit any nontrivial solution for $p\geq6$. Subsequently, Silva and Siciliano \cite{Silva2020} utilized the so-called ``fibering approach" and found that system (\ref{main}) admits no solution for sufficiently large $q$ yet possesses two radial solutions for sufficiently small $q$. Furthermore, they gave some qualitative properties of the energy level of the solutions and a variational characterization of the extremal values of $q$. In the latter case, it is natural to prescribe the value of the mass so that $\omega$ can be interpreted as a Lagrange multiplier, and the solutions to system (\ref{main}) are then called normalized solutions. To the best of our knowledge, the existence and multiplicity of normalized solutions for the Schr\"{o}dinger-Bopp-Podolsky system were first studied by Afonso and Siciliano \cite{Afonso2023} in a bounded smooth domain. For the whole space $\mathbb{R}^3$, Liu \cite{LiuC2022} proved the existence of normalized solutions by the minimization method for system (\ref{main}) with $p\in(2, \frac83]$. And then, Ramos and Siciliano \cite{Ramos2023} applied standard scaling arguments and some basic properties of the energy functional to extend the range of $p$ to $(2, 3)\cup(3, \frac{10}3)$. For the $L^2$-supercritical case, i.e. $p\in(\frac{10}3, 6)$, the present authors \cite{Huang2025} proved the existence of normalized solutions to system (\ref{main}) via the mountain-pass lemma. In addition, recently, some papers have considered the existence of nontrivial solutions or ground state solutions for system (\ref{main}) when the potential $V(x)$ is allowed to be not constant or the nonlinearity $|u|^{p-2}u$ is of more general form, namely,
\begin{equation}\nonumber
\left\{
\begin{aligned}
&-\Delta u+V(x)u+\lambda\phi u=f(u) \quad \hbox{in}~~\mathbb{R}^3,\\
&-\Delta\phi+a^2\Delta^2\phi=4\pi |u|^2 \quad \hbox{in}~~\mathbb{R}^3,
\end{aligned}
\right.
\end{equation}
for $\lambda>0$. For more details, we refer the reader to \cite{Silva2020JDE, Teng2021, Yang2020} for the subcritical case and \cite{Chen2020, Damian2024, Li2020, Liu2022, Zhu2021} for the critical nonlinearity.

It should be noted that all the above-mentioned results were obtained under the condition of $\lambda>0$. For system (\ref{main}) with $\lambda<0$, by applying the truncation argument \cite{Azorero1991} and the abstract minimax theorem \cite{Jeanjean2019}, Peng \cite{Peng2024} proved that when $\lambda<0$ and $p=6$, system (\ref{main}) admits at least $n$ pairs of radial normalized solutions with negative energy. However, when $\omega$ is treated as a known quantity and $\lambda<0$, the existence of solutions to system (\ref{main}) remains unknown. In view of this, the present paper aims to address this problem. Specifically speaking, we study the existence of nontrivial solutions and ground state solutions for the following Schr\"{o}dinger-Bopp-Podolsky system:
\begin{equation}\label{main6}
\left\{
\begin{aligned}
&-\Delta u+\omega u-\mu\phi u=|u|^{p-2}u \quad \hbox{in}~~\mathbb{R}^3,\\
&-\Delta\phi+a^2\Delta^2\phi=4\pi |u|^2 \quad \hbox{in}~~\mathbb{R}^3,
\end{aligned}
\right.
\end{equation}
where $\omega>0$, $\mu:=-\lambda q^2>0$, $a>0$ and $p\in(2, 6]$. Furthermore, we are concerned with some qualitative properties of the ground state solutions, such as positivity, radial symmetry, rotational invariance, and exponential decay. Finally, we also investigate the asymptotic behavior of the solutions.

Before proceeding with our research, a few preliminaries are in order. We introduce here the space $\mathcal{D}$ as the completion of $C^{\infty}_c(\mathbb{R}^3)$ with respect to the norm ${(\|\nabla\phi\|^2_{L^2(\mathbb{R}^3)}+a^2\|\Delta\phi\|^2_{L^2(\mathbb{R}^3)})}^{1\over2}$; for more details on this space, see Section 2. For fixed $a>0$, we say that a pair $(u, \phi)\in H^1(\mathbb{R}^3)\times\mathcal{D}$ is a solution of system (\ref{main6}) if
\begin{align*}
\int_{\mathbb{R}^3}\nabla u\cdot\nabla vdx+\omega\int_{\mathbb{R}^3}uvdx-\mu\int_{\mathbb{R}^3}\phi uvdx=\int_{\mathbb{R}^3}|u|^{p-2}uvdx, ~~~\forall~ v\in H^1(\mathbb{R}^3),
\end{align*}
\begin{align*}
\int_{\mathbb{R}^3}\nabla\phi\cdot\nabla\xi dx+a^2\int_{\mathbb{R}^3}\Delta\phi\Delta\xi dx=4\pi\int_{\mathbb{R}^3}\xi|u|^2dx, ~~\forall~ \xi\in\mathcal{D}.
\end{align*}
Furthermore, we call that a solution $(u, \phi)$ is nontrivial if $u\not\equiv 0$. As described in \cite{dAvenia2019}, by the Riesz Theorem, for every fixed $u\in H^1(\mathbb{R}^3)$, there exists a unique solution $\phi_u\in\mathcal{D}$ of the second equation in system (\ref{main6}), which is given by
\begin{align}\label{main-phi}
\phi_u:=\mathcal{K}\ast |u|^2,
\end{align}
where $\ast$ represents the convolution in $\mathbb{R}^3$ and $\mathcal{K}(x)=\frac{1-e^{-\frac{|x|}{a}}}{|x|}$. Inserting (\ref{main-phi}) into the first equation of system (\ref{main6}), we get
\begin{equation}\label{main2}
-\Delta u+\omega u-\mu\phi_u u=|u|^{p-2}u \quad\hbox{in}~ \mathbb{R}^3.
\end{equation}
Then, the following statements are equivalent:
\begin{itemize}
 \item[(i)] the pair $(u, \phi_u)\in H^1(\mathbb{R}^3)\times\mathcal{D}$ is a solution of system (\ref{main6});
 \item[(ii)] $u\in H^1(\mathbb{R}^3)$ is a solution of Eq.(\ref{main2}).
\end{itemize}
Clearly, the solution $u$ corresponds to the critical point of the following energy functional:
\begin{align}\label{energy}
J(u):=\frac12\|\nabla u\|^2_{L^2(\mathbb{R}^3)}+\frac{\omega}2\|u\|^2_{L^2(\mathbb{R}^3)}-\frac{\mu}4\int_{\mathbb{R}^3}\phi_u|u|^2dx-\frac1{p}\|u\|^p_{L^p(\mathbb{R}^3)}.
\end{align}
Moreover, in view of \cite{dAvenia2019}, the functional $J$ is $C^1(H^1(\mathbb{R}^3), \mathbb{R})$ and, for all $u, v\in H^1(\mathbb{R}^3)$
\begin{align}\label{Nehari-functional}
\langle J'(u), v\rangle=\int_{\mathbb{R}^3}\nabla u\cdot\nabla vdx+\omega\int_{\mathbb{R}^3}uvdx-\mu\int_{\mathbb{R}^3}\phi_u uvdx-\int_{\mathbb{R}^3}|u|^{p-2}uvdx.
\end{align}

To find a nontrivial solution of Eq.(\ref{main2}), we first note that owing to the different spatial scaling properties of each term in Eq.(\ref{main2}), the usual constrained minimization and Lagrange multiplier methods are not applicable here. Following the strategy in \cite{Jeanjean1997, Li2011}, we aim to construct a solution by virtue of the mountain-pass lemma. For this purpose, we define the mountain-pass level $c$ as
\begin{align}\label{mountain-pass}
c:=\inf\limits_{\gamma\in\Gamma}\max\limits_{t\in[0, 1]}J(\gamma(t)),
\end{align}
where
\begin{align}\label{mountain-pass1}
\Gamma:=\{\gamma\in C([0, 1], H^1(\mathbb{R}^3)):~ \gamma(0)=0, J(\gamma(1))<0\}.
\end{align}
In view of the interaction between local and nonlocal terms, we construct a Cerami sequence $\{u_n\}^{\infty}_{n=1}$ at the level $c$ and then show that, up to translations, this sequence converges to a nontrivial solution $u$ of Eq.(\ref{main2}). By analyzing the properties of the Cerami sequence constructed above, we can establish the existence of nontrivial solutions for Eq.(\ref{main2}) as follows.
\begin{theorem}\label{theorem1}
Eq.(\ref{main2}) admits a nontrivial solution $u\in H^1(\mathbb{R}^3)$ if one of the following conditions is satisfied:
	\begin{itemize}
		\item [(1)] $p\in(2, 6)$ and $\mu>0$;
		\item [(2)] $p=6$ and $\mu>0$ large enough.
	\end{itemize}
In addition, $u\in W^{2, s}(\mathbb{R}^3)$ for every $s>1$.
\end{theorem}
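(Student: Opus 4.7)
My plan is to produce the nontrivial solution as a critical point of $J$ via a Cerami version of the mountain pass theorem, and then bootstrap to obtain the claimed regularity. First I verify the mountain pass geometry. Near the origin, the estimate $\int_{\mathbb{R}^3}\phi_u|u|^2\,dx\leq C\|u\|_{H^1}^4$ (which follows from Young's inequality together with the fact that the kernel $\mathcal{K}(x)=(1-e^{-|x|/a})/|x|$ is bounded near $0$ and lies in $L^q(\mathbb{R}^3)$ for every $q>3$) and the Sobolev embedding $H^1(\mathbb{R}^3)\hookrightarrow L^p(\mathbb{R}^3)$ show that the quadratic part of $J$ dominates, so there exist $\rho,\alpha>0$ with $J(u)\geq\alpha$ on $\|u\|_{H^1}=\rho$. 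For the downhill direction, the scaling $\phi_{tu}=t^2\phi_u$ yields $J(tu)=O(t^2)-\frac{\mu t^4}{4}\int\phi_u|u|^2\,dx-\frac{t^p}{p}\|u\|_{L^p}^p\to-\infty$ as $t\to\infty$ for any $u\not\equiv 0$, so $\Gamma\neq\emptyset$ and the level $c$ in (1.6) is strictly positive and finite. A standard deformation argument then yields a Cerami sequence $\{u_n\}$ with $J(u_n)\to c$ and $(1+\|u_n\|_{H^1})\|J'(u_n)\|_{(H^1)^*}\to 0$.

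To pass to the limit I first show $\{u_n\}$ is bounded in $H^1$ by combining $J(u_n)$ and $\langle J'(u_n),u_n\rangle$ so as to kill the sign-indefinite nonlocal term. For $p\geq 4$, the identity $J(u_n)-\tfrac14\langle J'(u_n),u_n\rangle=\tfrac14\|\nabla u_n\|_{L^2}^2+\tfrac{\omega}{4}\|u_n\|_{L^2}^2+(\tfrac14-\tfrac1p)\|u_n\|_{L^p}^p$ is a sum of nonnegative quantities and immediately gives boundedness. For $2<p<4$, the alternative combination $J(u_n)-\tfrac1p\langle J'(u_n),u_n\rangle$ leaves positive multiples of $\|\nabla u_n\|_{L^2}^2$, $\|u_n\|_{L^2}^2$, and $\mu\int\phi_{u_n}|u_n|^2\,dx$ (the last with coefficient $\tfrac1p-\tfrac14>0$), again yielding boundedness. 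Vanishing in the sense of Lions is ruled out by $c>0$: it would force $\|u_n\|_{L^r}\to 0$ for every $r\in(2,6)$, hence $\int\phi_{u_n}|u_n|^2\,dx\to 0$, and then $J(u_n)\to 0$, a contradiction. So there exist translations $y_n\in\mathbb{R}^3$ for which $\tilde u_n:=u_n(\cdot+y_n)$ has a nontrivial weak limit $u\in H^1(\mathbb{R}^3)$; translation invariance of $J$ and the weak continuity of $u\mapsto\phi_u u$ on bounded sets then ensure that $u$ is a nontrivial solution of Eq.(1.3).

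The main obstacle is the Sobolev-critical case $p=6$, where the compactness argument above fails unless the Cerami level satisfies $c<\tfrac13 S^{3/2}$ ($S$ the best Sobolev constant). Here I would exploit the attractive sign of the nonlocal contribution: plugging a cut-off, rescaled Aubin-Talenti extremizer $U_\varepsilon$ into $J$ and controlling $\int\phi_{U_\varepsilon}|U_\varepsilon|^2\,dx$ from below shows that $\max_{t\geq 0}J(tU_\varepsilon)<\tfrac13 S^{3/2}$ whenever $\mu$ is taken sufficiently large, which is precisely the hypothesis in case (2); below this threshold a bubble-analysis argument rules out concentration and restores compactness, after which the subcritical argument applies verbatim. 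Finally, the regularity assertion $u\in W^{2,s}(\mathbb{R}^3)$ for every $s>1$ follows by a standard bootstrap: since $\mathcal{K}\in L^q(\mathbb{R}^3)$ for all $q>3$ and $|u|^2\in L^1(\mathbb{R}^3)\cap L^3(\mathbb{R}^3)$, Young's inequality gives $\phi_u\in L^\infty(\mathbb{R}^3)$, so the right-hand side $(\mu\phi_u-\omega)u+|u|^{p-2}u$ lies in successively better $L^s$ spaces, and Calder\'on-Zygmund estimates applied to $-\Delta u+\omega u$ upgrade $u$ to $W^{2,s}(\mathbb{R}^3)$ for every $s\in(1,\infty)$.
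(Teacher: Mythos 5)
Your overall strategy coincides with the paper's: a Cerami sequence at the mountain-pass level, boundedness via linear combinations of $J(u_n)$ and $\langle J'(u_n),u_n\rangle$ chosen so that the sign $\phi_{u_n}\geq 0$ makes the nonlocal term harmless (your combinations $J-\tfrac14\langle J',\cdot\rangle$ for $p\geq 4$ and $J-\tfrac1p\langle J',\cdot\rangle$ for $p<4$ are, up to a constant factor, exactly the ones used there), Lions vanishing plus translation in the subcritical case, the threshold $c<\tfrac13 K^{3/2}$ for large $\mu$ via truncated Aubin--Talenti bubbles together with a Brezis--Lieb splitting when $p=6$, and a bootstrap for the $W^{2,s}$ regularity. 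If anything, your route is slightly leaner: the paper manufactures its Cerami sequence through Jeanjean's auxiliary functional $\Psi(s,v)=J(e^{s}v(e^{s}\cdot))$ so as to also get $I(u_n)\to 0$ along a Pohozaev-type functional, but that extra information is never used in the boundedness or compactness steps, so invoking the standard Cerami mountain-pass theorem as you do is sufficient.

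One step as written does not follow, however. In ruling out vanishing you argue: $\|u_n\|_{L^r}\to 0$ for $r\in(2,6)$, hence $\int\phi_{u_n}|u_n|^2\,dx\to 0$, ``and then $J(u_n)\to 0$.'' The last implication is false from those facts alone, because $J$ still contains the positive quadratic part $\tfrac12\|\nabla u_n\|_{L^2}^2+\tfrac{\omega}{2}\|u_n\|_{L^2}^2$, which is not forced to vanish by the decay of the subcritical norms. You must first invoke the derivative information: since $|\langle J'(u_n),u_n\rangle|\leq(1+\|u_n\|_{H^1})\|J'(u_n)\|_{H^{-1}}\to 0$, one has
\begin{equation*}
\min\{1,\omega\}\,\|u_n\|_{H^1}^2\leq \langle J'(u_n),u_n\rangle+\mu\int\phi_{u_n}|u_n|^2\,dx+\|u_n\|_{L^p}^p\to 0,
\end{equation*}
which forces $\|u_n\|_{H^1}\to 0$ and only then $J(u_n)\to 0$, contradicting $c>0$. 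This is exactly how the paper closes the argument; with that correction your proof goes through. (The same caveat applies at $p=6$, where $\|u_n\|_{L^6}$ does not vanish under Lions' lemma and one must instead run the dichotomy $l=0$ or $l\geq K^{3/2}$ against the threshold $c<\tfrac13K^{3/2}$, as your sketch implicitly intends.)
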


\begin{remark}\label{remark1}
Compared with the related results in \cite{dAvenia2019}, our proofs exhibit several differences: (a) In order to obtain the existence of nontrivial solutions, the authors in \cite{dAvenia2019} constructed a Pohozaev-Palais-Smale sequence. However, due to the fact that Eq.(\ref{main2}) has a negative nonlocal term, the general Palais-Smale sequence is not suitable since verifying boundedness and compactness is challenging. For this reason, we adopted the strategy in \cite{Jeanjean1997, Li2011} to construct a Cerami sequence at the mountain-pass level $c$. (b) The authors of \cite{dAvenia2019} further employed the Pohozaev identity to prove the nonexistence of nontrivial solutions when $p\geq 6$, which contrasts sharply with our conclusion in Theorem \ref{theorem1}. Conversely, in our setting, by demonstrating that the mountain-pass level $c$ satisfies $0<c<\frac13 K^{\frac32}$ (see Lemma \ref{proof-theorem1-lemma1}), and combining the splitting lemma with some new analytical techniques, we established the existence of nontrivial solutions to Eq.(\ref{main2}) at the Sobolev critical exponent (i.e., $p=6$).
\end{remark}

In general, we cannot determine whether the nontrivial solution $u$ in Theorem \ref{theorem1} is a ground state. Here, a ground state refers to a solution $u\in H^1(\mathbb{R}^3)$ of Eq.(\ref{main2}) satisfying
\begin{align}\label{ground-state}
c_g:=J(u)=\inf\limits_{v\in\mathcal{N}}J(v).
\end{align}
Here,
\begin{align}\label{Nehari-manifold}
\mathcal{N}:=\{u\in H^1(\mathbb{R}^3)\setminus\{0\}:~ \langle J'(u), u\rangle=0\}.
\end{align}
Inspired by Jeanjean and Tanaka \cite{Jeanjean2003}, any solution of Eq.(\ref{main2}) can be lifted to a path associated with critical points. This approach allows us to estimate the critical level $J(u)$, thereby proving that $u$ is a ground state of mountain-pass type. Notably, in proving the existence of ground state solutions, we establish a key inequality involving $J(u)$, $J(\tau u)$ and $\langle J'(u), u\rangle$ (see Lemma \ref{preliminary-lemma-1}) to address the lack of compactness caused by the unbounded domain, this inequality is crucial to our arguments. With it, we can then recover the compactness of the minimizing sequence $\{u_n\}^{\infty}_{n=1}$ and demonstrate its weak convergence to a nontrivial limit point $u\in H^1(\mathbb{R}^3)$. Finally, by applying the deformation lemma and intermediate value theorem, we prove that $u$ is a critical point of $J$, and hence is a ground state solution of Eq.(\ref{main2}).

\begin{theorem}\label{theorem2}
Let $p\in[4, 6]$ and satisfy the conditions in Theorem \ref{theorem1}, then Eq.(\ref{main2}) has a ground state solution $u\in H^1(\mathbb{R}^3)$ such that
$$c_g=J(u)=\inf\limits_{v\in\mathcal{N}}J(v)=\inf\limits_{v\in H^1(\mathbb{R}^3)\setminus\{0\}}\max\limits_{\tau>0}J(\tau v)>0,$$
and $c_g=c$.
\end{theorem}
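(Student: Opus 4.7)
My strategy is to identify the Nehari infimum $c_g$ with the mountain-pass level $c$, produce a minimizer of $J$ on the Nehari manifold $\mathcal{N}$, and then upgrade it to a critical point of $J$ via a deformation argument.

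\textbf{Step 1 (Fiber analysis and $c=c_g$).} For $v\in H^1(\R^3)\setminus\{0\}$ I would analyze the fiber map
\[
g_v(\tau) := J(\tau v) = \frac{\tau^{2}}{2}\bigl(\|\nabla v\|_{L^{2}}^{2}+\omega\|v\|_{L^{2}}^{2}\bigr) - \frac{\mu\tau^{4}}{4}\int_{\R^{3}}\phi_v|v|^{2}\,dx - \frac{\tau^{p}}{p}\|v\|_{L^{p}}^{p},
\]
which satisfies $g_v(0)=0$, $g_v(\tau)>0$ for small $\tau>0$, and $g_v(\tau)\to-\infty$ as $\tau\to\infty$. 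The equation $g_v'(\tau)=0$ reduces to $\|\nabla v\|_{L^{2}}^{2}+\omega\|v\|_{L^{2}}^{2}=\mu\tau^{2}\!\int\phi_v|v|^{2}\,dx+\tau^{p-2}\|v\|_{L^{p}}^{p}$; since $p\in[4,6]$ the right-hand side is strictly increasing in $\tau$, so $g_v$ admits a unique maximizer $\tau_v>0$, and $\tau_v v\in\mathcal{N}$. This yields $c_g=\inf_{v\ne 0}\max_{\tau>0}J(\tau v)$. For any $u\in\mathcal{N}$, picking $T>0$ large enough that $J(Tu)<0$, the straight path $\gamma(t):=tTu$ lies in $\Gamma$ and attains its maximum at $t=1/T$ with value $J(u)$, giving $c\le c_g$. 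Conversely, for any $\gamma\in\Gamma$, the function $F(t):=\langle J'(\gamma(t)),\gamma(t)\rangle$ is positive for small $t>0$ (the quadratic terms dominate) and satisfies $F(1)<0$ (because $J(\gamma(1))<0$ forces $\tau_{\gamma(1)}<1$, hence $g'_{\gamma(1)}(1)<0$); the intermediate value theorem then makes $\gamma$ meet $\mathcal{N}$, so $\max_t J(\gamma)\ge c_g$ and $c\ge c_g$.

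\textbf{Step 2 (Existence of a minimizer on $\mathcal{N}$).} The identity $J(u)-\tfrac{1}{4}\langle J'(u),u\rangle=\tfrac{1}{4}\|\nabla u\|_{L^{2}}^{2}+\tfrac{\omega}{4}\|u\|_{L^{2}}^{2}+\bigl(\tfrac{1}{4}-\tfrac{1}{p}\bigr)\|u\|_{L^{p}}^{p}$ has all three terms non-negative for $p\ge 4$, so on $\mathcal{N}$ one has $J\ge\tfrac{1}{4}\min\{1,\omega\}\|\cdot\|_{H^{1}}^{2}$. Combining this with the Nehari constraint and the Hardy--Littlewood--Sobolev bound $\int\phi_u|u|^{2}\,dx\le C\|u\|_{L^{12/5}}^{4}$ produces a uniform lower bound $\|u\|_{H^{1}}\ge\delta>0$ on $\mathcal{N}$, whence $c_g>0$ and every minimizing sequence $\{u_n\}\subset\mathcal{N}$ is bounded in $H^{1}$. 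Non-vanishing is ruled out through Lions' lemma — using, in the critical case $p=6$, the strict threshold from Lemma~\ref{proof-theorem1-lemma1} to prevent concentration on Talenti bubbles — so that after suitable translations $u_n(\cdot+y_n)\rightharpoonup u\not\equiv 0$ in $H^{1}(\R^{3})$.

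\textbf{Step 3 (Passing to the limit and the critical point).} The main obstacle is that $J$ fails to be weakly lower semicontinuous, because of the negative sign on the nonlocal term. This is exactly what the key inequality of Lemma~\ref{preliminary-lemma-1} is designed for; I would verify by direct computation that it amounts to
\[
J(u)-J(\tau u)-\frac{1-\tau^{2}}{2}\langle J'(u),u\rangle = \frac{\mu(1-\tau^{2})^{2}}{4}\int\phi_u|u|^{2}\,dx + \Bigl(\frac{1-\tau^{2}}{2}-\frac{1-\tau^{p}}{p}\Bigr)\|u\|_{L^{p}}^{p}\ge 0
\]
for every $\tau>0$. Setting $w_n:=u_n-u$, a Brezis--Lieb splitting for $\|\cdot\|_{L^{p}}^{p}$ together with its nonlocal counterpart gives $J(u_n)=J(u)+J(w_n)+o(1)$ and $\langle J'(u_n),u_n\rangle=\langle J'(u),u\rangle+\langle J'(w_n),w_n\rangle+o(1)$; combining with the non-negativity identity of Step~2 forces $\langle J'(u),u\rangle=0$, so $u\in\mathcal{N}$ and $J(u)\ge c_g$, and simultaneously $J(w_n)\to 0$ and $\|w_n\|_{H^{1}}\to 0$, giving $J(u)=c_g$. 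Finally, if $J'(u)\ne 0$, the deformation lemma would produce a continuous $\eta$ lowering $J$ near $u$; composing $\eta$ with the fiber $\tau\mapsto\tau u$ and applying the intermediate value theorem to $\tau\mapsto\langle J'(\eta(\tau u)),\eta(\tau u)\rangle$ would yield an element of $\mathcal{N}$ with energy strictly below $c_g$, a contradiction. Hence $u$ is a critical point and the desired ground state of mountain-pass type.
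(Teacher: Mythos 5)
Your proposal is correct in substance, but it takes a genuinely different route from the paper's actual proof. The paper never minimizes $J$ over $\mathcal{N}$: it takes the mountain-pass solution $u$ already produced by the Cerami sequence in Theorem \ref{theorem1}, shows $J(u)\le c$ by applying Fatou's lemma to the nonnegative quantity $J(u_n)-\tfrac12\langle J'(u_n),u_n\rangle=\tfrac{\mu}{4}\int\phi_{u_n}|u_n|^2dx+(\tfrac12-\tfrac1p)\|u_n\|^p_{L^p}$, and shows $J(v)\ge c$ for \emph{every} nontrivial solution $v$ via the path-lifting Lemma \ref{preliminary-lemma-5}; this gives $c_g=c=J(u)$ at once, and the minimax characterization follows from Lemmas \ref{preliminary-lemma-2}--\ref{preliminary-lemma-4}. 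No deformation lemma and no Brezis--Lieb splitting are used (despite what the introduction advertises). Your route --- identify $c=c_g$ through the fiber maps and the crossing of $\mathcal{N}$ by every admissible path, minimize directly on $\mathcal{N}$, recover compactness by splitting, and finish with a deformation argument --- is the classical Nehari approach and also works here; the identity you display in Step~3 is a correct variant of Lemma \ref{preliminary-lemma-1} (the paper's version attaches the $(1-\tau^2)^2$ coefficient to the $H^1$ terms rather than to the nonlocal term, but both come from the same algebra and both yield Corollary \ref{preliminary-corollary}). Two points in your write-up need to be expanded to be watertight: (i) the claim that the splitting ``forces'' $\langle J'(u),u\rangle=0$ does not follow from the non-negativity of $Q(v):=J(v)-\tfrac14\langle J'(v),v\rangle$ alone; you must treat the cases $\langle J'(u),u\rangle>0$ and $\langle J'(u),u\rangle<0$ separately, project $w_n$ (respectively $u$) onto $\mathcal{N}$ by a fiber factor $\tau<1$, and use $Q(\tau v)<Q(v)$ together with $Q(u)\ge\tfrac14\min\{1,\omega\}\|u\|_{H^1}^2>0$ to derive $c_g<c_g$; (ii) in the path-crossing argument, $F(t)>0$ should be asserted at the first time $\|\gamma(t)\|_{H^1}$ reaches a fixed small $\rho>0$ rather than ``for small $t$'', since $\gamma(0)=0$. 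Both are routine. What the paper's approach buys is brevity, since it reuses Theorem \ref{theorem1} wholesale; what yours buys is independence from the mountain-pass construction and, as a by-product, strong convergence of every (translated) minimizing sequence on $\mathcal{N}$, which the paper does not establish.
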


Building on Theorem \ref{theorem2}, we further investigate several qualitative properties of these solutions, including positivity, radial symmetry, rotational invariance, and exponential decay. 

\begin{theorem}\label{theorem4}
	Assume $p\in[4, 6]$ and satisfy the conditions in Theorem \ref{theorem1}. Let $u\in H^1(\mathbb{R}^3)$ be a ground state solution to Eq.(\ref{main2}), then
	\begin{itemize}
		\item [(1)] $|u|>0$ is a ground state to Eq.(\ref{main2}) for all $x\in\mathbb{R}^3$.
		\item [(2)] There exist $x_0\in\mathbb{R}^3$ and a nonincreasing positive function $v: (0, +\infty)\rightarrow\mathbb{R}$ such that $|u(x)|=v(|x-x_0|)$ for almost every $x\in\mathbb{R}^3$.
		\item [(3)] $u=e^{i\theta}|u|$ for some $\theta\in\mathbb{R}$.
		\item [(4)] There exists a constant $M>0$ such that
		$$|u(x)|\leq Me^{-\frac{\sqrt{\omega}}2|x|}, \quad\hbox{for all}~ x\in\mathbb{R}^3.$$
	\end{itemize}
\end{theorem}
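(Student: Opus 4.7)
My plan is to prove the four properties in the order $(3) \Rightarrow (1) \Rightarrow (2) \Rightarrow (4)$, so that reducing to a real, nonnegative, radially decreasing solution makes the decay estimate transparent. I will exploit the identity $\phi_u = \phi_{|u|}$ (the nonlocal term depends only on $|u|^2$), the minimax characterization $c_g = \inf_{v \neq 0} \max_{\tau > 0} J(\tau v)$ from Theorem \ref{theorem2}, and the fact that for $p \in [4,6]$ the fiber map $\tau \mapsto J(\tau v)$ has a unique positive maximizer $\tau^*(v)$, which places $\tau^*(v) v$ on $\mathcal{N}$.

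For part $(3)$, the diamagnetic inequality $|\nabla u|^2 \ge |\nabla|u||^2$ a.e.\ yields $J(\tau|u|) \le J(\tau u)$ for every $\tau > 0$. Using $u \in \mathcal{N}$ so that $J(u) = \max_\tau J(\tau u)$, the squeeze
\begin{align*}
c_g \le \max_{\tau>0} J(\tau|u|) \le \max_{\tau>0} J(\tau u) = J(u) = c_g
\end{align*}
together with $J(\tau|u|) \le J(\tau u)$ evaluated at $\tau = \tau^*(|u|)$ forces $J(\tau^*(|u|)|u|) = J(\tau^*(|u|) u)$, and since the nonlocal, $L^2$ and $L^p$ contributions to $J$ are invariant under $u \mapsto |u|$, this reduces to $\|\nabla|u|\|_{L^2} = \|\nabla u\|_{L^2}$. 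Writing $u = |u| e^{i\theta(x)}$ on $\{|u|>0\}$, equality in the diamagnetic inequality implies $\nabla\theta \equiv 0$ there, so $\theta$ is constant on each connected component. For part $(1)$, Kato's inequality combined with the equation for $u$ yields the distributional inequality $-\Delta|u| \le V(x)|u|$ with bounded potential $V = -\omega + \mu\phi_u + |u|^{p-2}$, so the strong maximum principle (in its Harnack/Hopf form) forces $|u| > 0$ on $\mathbb{R}^3$; in particular $\{|u|>0\}$ is connected, promoting $\theta$ to a single global constant and completing $(3)$.

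For part $(2)$, let $u^*$ denote the Schwarz symmetrization of $|u|$. A short derivative calculation shows $\mathcal{K}(r) = (1-e^{-r/a})/r$ is strictly decreasing on $(0,\infty)$ (its numerator $a^{-1} r e^{-r/a} - 1 + e^{-r/a}$ vanishes at $r=0$ and has derivative $-a^{-2} r e^{-r/a} < 0$), so $\mathcal{K}$ is a positive symmetric decreasing kernel and Riesz rearrangement gives $\int \phi_{|u|}|u|^2\,dx \le \int \phi_{u^*}(u^*)^2\,dx$; combined with P\'olya-Szeg\H{o} and the $L^2$, $L^p$ invariance this yields $J(\tau u^*) \le J(\tau|u|)$ for every $\tau > 0$. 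Repeating the squeeze argument of $(3)$ forces equality in P\'olya-Szeg\H{o}, and the Brothers-Ziemer rigidity theorem then identifies $|u|$ as a translate of a radial, nonincreasing function $v(|x-x_0|)$.

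Finally for part $(4)$, Theorem \ref{theorem1} gives $u \in W^{2,s}(\mathbb{R}^3)$ for every $s > 1$, so $u$ is continuous with $|u(x)| \to 0$ as $|x| \to \infty$; the bound $\mathcal{K}(x) \le \min\{1/a,\, 1/|x|\}$ together with $|u|^2 \in L^1$ also gives $\phi_u(x) = O(1/|x|)$. Hence $\mu\phi_u + |u|^{p-2} < \omega/4$ outside some ball $B_R$, so $|u|$ satisfies $-\Delta|u| + (\omega/4)|u| \le 0$ on $\{|x| > R\}$. A direct radial computation shows that $W(x) = e^{-\frac{\sqrt{\omega}}{2}|x|}$ satisfies $-\Delta W + (\omega/4)W = (\sqrt{\omega}/|x|)W \ge 0$ for $x \neq 0$, so the comparison principle on the exterior domain (using $|u|, W \to 0$ at infinity) yields $|u| \le M_0 W$ on $\{|x|>R\}$, and enlarging the constant to dominate $|u|$ on the compact set $\overline{B_R}$ gives the stated global bound. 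The main obstacle I anticipate is in part $(2)$: upgrading equality in P\'olya-Szeg\H{o} to genuine radial symmetry via Brothers-Ziemer requires verifying that $|u|$ has no critical level sets of positive measure, which in turn relies on unique continuation for the semilinear elliptic equation satisfied by $|u|$.
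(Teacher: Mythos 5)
Your overall architecture is workable and parts (2) and (4) take genuinely different routes from the paper, but part (1) contains a real misstep. Kato's inequality applied to the equation for $u$ gives $\Delta|u|\geq\mathrm{Re}\bigl(\mathrm{sgn}(\bar u)\,\Delta u\bigr)=(\omega-\mu\phi_u-|u|^{p-2})|u|$, i.e.\ the \emph{subsolution} inequality $-\Delta|u|\leq V|u|$. The strong maximum principle and the weak Harnack inequality yield strict positivity only for nonnegative \emph{supersolutions} $-\Delta w+\kappa w\geq0$; a nonnegative subsolution can perfectly well vanish on an open set, so your stated justification does not close. The fix is already contained in your own squeeze from part (3): since $\|\nabla|u|\|_{L^2}=\|\nabla u\|_{L^2}$ forces $|u|\in\mathcal{N}$ with $J(|u|)=c_g$, the natural-constraint property (Lemma \ref{preliminary-lemma-3}(2)) makes $|u|$ itself a critical point of $J$, so $|u|$ satisfies $-\Delta|u|+\omega|u|=\mu\phi_{|u|}|u|+|u|^{p-1}\geq0$, which is the supersolution inequality to which the maximum principle genuinely applies. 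This is exactly the paper's route in Lemma \ref{property-positive}.

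Concerning the routes that differ from the paper: in (2) the obstacle you flag for Brothers--Ziemer (critical level sets of positive measure) is real, but it is avoidable inside your own framework. Your squeeze forces equality \emph{simultaneously} in P\'olya--Szeg\H{o} and in the Riesz rearrangement inequality (both corrections to $J$ have the same sign), and since $\mathcal{K}$ is strictly symmetric decreasing, the equality case of the Riesz inequality (Lieb's theorem, Lieb--Loss Thm.~3.9) already identifies $|u|^2$ as a translate of its symmetric decreasing rearrangement --- no rigidity for P\'olya--Szeg\H{o} is needed. The paper avoids symmetrization altogether and runs the same squeeze through polarizations (Lemmas \ref{property-radial-2}--\ref{property-radial-4}), whose equality case is elementary; that is the more robust choice, but your Riesz-based version is legitimate once you exploit the correct term. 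In (4), your shortcut --- using the global $W^{2,s}(\mathbb{R}^3)$ regularity from Theorem \ref{theorem1} to conclude $|u(x)|\rightarrow0$ at infinity, plus $\phi_u\in C_0(\mathbb{R}^3)$ from Lemma \ref{property2} (your claimed rate $O(1/|x|)$ is more than you need and not justified by the stated bound alone), and then the exterior comparison with $e^{-\frac{\sqrt{\omega}}{2}|x|}$ --- is a valid and considerably shorter replacement for the paper's two Moser iterations, at the price of leaning on the global regularity assertion of Theorem \ref{theorem1} rather than giving a self-contained decay proof.
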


It is particularly emphasized that when proving the exponential decay of the ground state solutions, the radial lemma \cite{Berestycki1983} cannot be directly applied here due to the influence of the nonlocal term. Thus, the main challenge we face is to verify that $|u(x)|\rightarrow0$ as $|x|\rightarrow\infty$. To address this difficulty, we adopt a new technique in this work, namely the Moser iteration technique, which is mainly inspired by \cite{Gilbarg1983, Kang2024}.

In the final part, we study the asymptotic behavior of solutions with respect to $a$. From the point of view of physics, the Bopp-Podolsky parameter $a>0$, which has inverse mass dimension, can be interpreted as a cut-off distance or linked to an effective electron radius. For more physical details, refer to \cite{Bertin2017, Farid2023} and the references therein. To present the theorem clearly, we denote $(u^a, \phi^a)\in H^1(\mathbb{R}^3)\times\mathcal{D}$ as solutions of Eq.(\ref{main2}). Let us observe that, due to the invariance of $J$ under the group induced by the action of rotations on $H^1(\mathbb{R}^3)$, we can restrict our study to $H^1_r(\mathbb{R}^3)$, the subspace of radial functions, which is a natural constraint: if $u\in H^1_r(\mathbb{R}^3)$ is a critical point of $J|_{H^1_r(\mathbb{R}^3)}$, then it is a critical point for the functional on the entire $H^1(\mathbb{R}^3)$. Then the same results as in Theorems \ref{theorem1} and \ref{theorem2} hold in the radial setting.

\begin{theorem}\label{theorem3}
	For each $a>0$, let $(u^a, \phi^a)\in H^1_r(\mathbb{R}^3)\times\mathcal{D}_r$ be solution of Eq.(\ref{main2}). Then as $a\rightarrow0$, up to a subsequence, there exist $(u^0, \phi^0)\in H^1_r(\mathbb{R}^3)\times D^{1, 2}_r(\mathbb{R}^3)$ such that
	$$(u^a, \phi^a)\rightarrow(u^0, \phi^0) \quad\hbox{in}~ H^1_r(\mathbb{R}^3)\times D^{1, 2}_r(\mathbb{R}^3),$$
	where $(u^0, \phi^0)\in H^1_r(\mathbb{R}^3)\times D^{1, 2}_r(\mathbb{R}^3)$ is a solution of the following Schr\"{o}dinger-Poisson-Slater equation
	\begin{equation}\label{main3}
		-\Delta u+\omega u-\mu\left(\frac1{|x|}\ast|u|^2\right)=|u|^{p-2}u \quad \hbox{in}~~\mathbb{R}^3,
	\end{equation}
	and $\phi^a:=\mathcal{K}\ast|u^a|^2$, $\phi^0:=\frac1{|x|}\ast|u^0|^2$.
\end{theorem}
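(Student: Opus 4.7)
The plan is to extract uniform $H^1$- and $D^{1,2}$-bounds on $\{(u^a,\phi^a)\}$ as $a\to 0^+$, pass to weak limits along a subsequence, identify the limit by exploiting the pointwise convergence of the Bopp-Podolsky kernel to the Coulomb kernel, and finally upgrade the weak convergences to strong ones.

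For the uniform bounds, the key monotonicity is that $\mathcal{K}_a(x)=(1-e^{-|x|/a})/|x|$ is nonincreasing in $a$ and is dominated by $1/|x|$, so $\phi^a_u$ is nonincreasing and $J_a(u)$ is nondecreasing in $a$ for each fixed $u$. Consequently every mountain-pass path admissible for $J_{a_0}$ remains admissible for $J_a$ whenever $a\leq a_0$, which yields $c_a\leq c_{a_0}$ for all $a\in(0,a_0]$. Combining $J_a(u^a)=c_a$ with the Nehari relation $\langle J_a'(u^a),u^a\rangle=0$ in the linear combination
\[
J_a(u^a)-\tfrac14\langle J_a'(u^a),u^a\rangle=\tfrac14\bigl(\|\nabla u^a\|_{L^2(\mathbb{R}^3)}^2+\omega\|u^a\|_{L^2(\mathbb{R}^3)}^2\bigr)+\bigl(\tfrac14-\tfrac1p\bigr)\|u^a\|_{L^p(\mathbb{R}^3)}^p
\]
gives $\|u^a\|_{H^1}\leq C$ uniformly for $p\in[4,6]$, while the Hardy-Littlewood-Sobolev inequality $\|\nabla\phi^a\|_{L^2}^2\leq C\|u^a\|_{L^{12/5}}^4$ produces the matching uniform bound on $\phi^a$ in $\mathcal{D}_r$, and in particular in $D^{1,2}_r(\mathbb{R}^3)$. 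A Sobolev-type lower bound $\|u^a\|_{H^1}\geq c>0$ coming from the Nehari identity then rules out vanishing.

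Passing to a subsequence one obtains $u^a\rightharpoonup u^0$ in $H_r^1(\mathbb{R}^3)$, $u^a\to u^0$ strongly in $L^s(\mathbb{R}^3)$ for every $s\in(2,6)$ by Strauss-Lions compactness, and $\phi^a\rightharpoonup\phi^0$ in $D^{1,2}_r(\mathbb{R}^3)$. Using $|\mathcal{K}_a(x)-1/|x||=e^{-|x|/a}/|x|$ together with dominated convergence yields the identification $\phi^0=|x|^{-1}\ast|u^0|^2$. Passing to the limit in the weak form of the Schr\"odinger equation is then standard: the linear terms pass by weak convergence; the nonlinearity passes by strong $L^p$-convergence (with the critical case $p=6$ requiring the non-concentration threshold $c<\tfrac13 K^{3/2}$ from Lemma \ref{proof-theorem1-lemma1}); and the nonlocal term $\int\phi^au^av\,dx$ passes because $\phi^a$ is bounded in $L^6(\mathbb{R}^3)$ and $u^av\to u^0v$ strongly in $L^{6/5}(\mathbb{R}^3)$ by H\"older interpolation. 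Thus $(u^0,\phi^0)$ is a weak solution of (\ref{main3}).

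To upgrade to strong convergence, I would test the equation for $u^a$ against $u^a$ itself and subtract the limiting Nehari identity to conclude that $\|\nabla u^a\|_{L^2}^2+\omega\|u^a\|_{L^2}^2\to\|\nabla u^0\|_{L^2}^2+\omega\|u^0\|_{L^2}^2$; combined with weak convergence in the Hilbert space $H_r^1$, this promotes to strong $H^1$-convergence. The strong $H^1$-convergence then propagates to $\phi^a\to\phi^0$ in $D^{1,2}_r(\mathbb{R}^3)$ via the HLS estimate $\||x|^{-1}\ast(|u^a|^2-|u^0|^2)\|_{D^{1,2}}\leq C\||u^a|^2-|u^0|^2\|_{L^{6/5}}\to 0$, together with the vanishing of the Bopp-Podolsky correction $(e^{-|\cdot|/a}/|\cdot|)\ast|u^a|^2$ in $D^{1,2}$ by dominated convergence. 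The main obstacle I anticipate is the critical exponent case $p=6$, where both the nonlocal interaction and the critical Sobolev term can destroy compactness; this is controlled by leveraging the sharp energy threshold $c<\tfrac13 K^{3/2}$ and the splitting/concentration-compactness framework already developed for Theorem \ref{theorem1}.
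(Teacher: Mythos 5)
Your proposal is correct and follows the same overall route as the paper's proof: uniform bounds from the energy and Nehari identities, extraction of radial weak limits, identification of the limiting Schr\"odinger--Poisson--Slater equation, and an upgrade to strong convergence by testing the two equations and killing the nonlocal and nonlinear cross terms via H\"older. Two of your refinements are genuine improvements over the paper's write-up. First, the paper treats the mountain-pass level $c$ as if it were independent of $a$; your monotonicity observation (the kernel $\mathcal{K}_a$ is pointwise nonincreasing in $a$, hence $J_a$ is nondecreasing in $a$ and $c_a\leq c_{a_0}$ for $a\leq a_0$) is exactly what is needed to make the uniform $H^1_r$ bound legitimate. Second, the paper obtains $\phi^a\to\phi^0$ in $D^{1,2}_r(\mathbb{R}^3)$ by citing Lemma \ref{asymptotic-lemma} as a black box, while you reprove it from the explicit kernel; one small correction is that for the correction term $(e^{-|\cdot|/a}/|\cdot|)\ast|u^a|^2$ the right mechanism is Young's inequality with $\|e^{-|\cdot|/a}/|\cdot|\|_{L^1}=4\pi a^2\to0$ (and the analogous $O(a)$ bound for the gradient of the kernel), rather than dominated convergence, since both the kernel and the density depend on $a$. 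Finally, you are right to flag the case $p=6$: the paper's estimate $\int|u^a|^{p-2}u^a(u^a-u^0)\,dx\leq\|u^a\|_{L^p}^{p-1}\|u^a-u^0\|_{L^p}\to0$ silently uses strong $L^p$ convergence, which the compact radial embedding only supplies for $p<6$; at the critical exponent one must indeed run the Br\'ezis--Lieb splitting under the threshold $c<\frac13K^{\frac32}$ as you indicate.
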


This paper is organized as follows. In Section 2, we present general preliminaries related to our problem. In Section 3, we establish a Cerami sequence $\{u_n\}^{\infty}_{n=1}$ for the functional $J$, and thereby prove the existence of a nontrivial solution to Eq.(\ref{main2}). In Section 4, we give the minimax property of ground state energy for $J$ and proceed with the analysis of the qualitative properties of the ground state solutions to Eq.(\ref{main2}). In Section 5, we study the behavior of the radial solutions with respect to $a$.

Throughout the paper we make use of the following notations:
\begin{itemize}
 \item $C^{\infty}_c$ denotes the space of the functions infinitely dofferentiable with compact support in $\mathbb{R}^3$;
 \item $\|\cdot\|_{L^r}$ denotes the usual norm of $L^r(\mathbb{R}^3)$ for $r\in [1, \infty)$, $\|\cdot\|_{L^\infty}$ denotes the norm of $L^{\infty}(\mathbb{R}^3)$, and $\|\cdot\|_{H^1}$ denotes the usual norm of $H^1(\mathbb{R}^3)$;
 \item we use the symbol $o_n(1)$ for a vanishing sequence in the specified space;
 \item if not specified, the domain of the integrals is $\mathbb{R}^3$;
 \item we use $C, C_1, C_2, \cdots$ to denote suitable positive constants whose value may also change
     from line to line;
 \item $B_R(x_0)$ denotes the ball centered at $x_0$ with radius $R>0$.
\end{itemize}

\renewcommand{\theequation}
{\thesection.\arabic{equation}}
\setcounter{equation}{0}
\section{Preliminaries}

In this section, we recall some preliminary results that will be used later. First, let us introduce several inequalities.

\begin{lemma}[Hardy-Littlewood-Sobolev inequality,  \cite{Lieb2001}]\label{HLS}
Let $s, r>1$ and $0<\alpha<3$ with $\frac1s+\frac1r+\frac{\alpha}3=2$. For $u\in L^s(\mathbb{R}^3)$, $v\in L^r(\mathbb{R}^3)$, there exists a constant $C(s, r)$ such that
$$\iint\frac{|u(x)v(y)|}{|x-y|^{\alpha}}dxdy\leq C(s, r)\|u\|_{L^s}\|v\|_{L^r}.$$
\end{lemma}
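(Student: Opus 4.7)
The inequality is a classical result (Hardy 1928, Littlewood 1930, Sobolev 1938) cited here from Lieb and Loss \cite{Lieb2001}; I sketch the argument I would present.

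The plan is to proceed by duality and the mapping properties of the Riesz potential
$$I_\alpha f(x):=\int_{\mathbb{R}^3}\frac{f(y)}{|x-y|^\alpha}\,dy.$$
Writing the double integral as $\int u\cdot I_\alpha v\,dx$ and applying Hölder's inequality with exponents $s$ and its conjugate $s'$, the task reduces to the strong-type boundedness $\|I_\alpha v\|_{L^{s'}(\mathbb{R}^3)}\le C\|v\|_{L^r(\mathbb{R}^3)}$. The scaling hypothesis $\tfrac1s+\tfrac1r+\tfrac{\alpha}3=2$ rewrites as $\tfrac1{s'}=\tfrac1r-\tfrac{3-\alpha}3$, which is exactly the relation dictated by the dilation invariance $f\mapsto f(\lambda\,\cdot)$ of the desired inequality.

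First I would establish the weak-type endpoint $I_\alpha:L^r\to L^{s',\infty}$ by splitting the kernel. For a parameter $R>0$ write $|x|^{-\alpha}=K_1(x)+K_2(x)$ with $K_1(x)=|x|^{-\alpha}\mathbf{1}_{\{|x|\le R\}}$. Since $\alpha<3$, $K_1\in L^1(\mathbb{R}^3)$ with $\|K_1\|_{L^1}\sim R^{3-\alpha}$, so Young's convolution inequality yields $\|K_1*f\|_{L^r}\le c\,R^{3-\alpha}\|f\|_{L^r}$. On the other hand, for $r<3/(3-\alpha)$ one has $K_2\in L^{r'}$ with $\|K_2\|_{L^{r'}}\sim R^{3/r'-\alpha}$, and Hölder gives $\|K_2*f\|_{L^\infty}\le c\,R^{3/r'-\alpha}\|f\|_{L^r}$. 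Given $t>0$, choose $R=R(t,\|f\|_{L^r})$ so that $\|K_2*f\|_{L^\infty}\le t/2$; then $\{|I_\alpha f|>t\}\subset\{|K_1*f|>t/2\}$, and Chebyshev's inequality combined with the $L^r$ bound on $K_1*f$ produces the weak-type estimate
$$\bigl|\{x\in\mathbb{R}^3:|I_\alpha f(x)|>t\}\bigr|\le C\bigl(\|f\|_{L^r}/t\bigr)^{s'},$$
with the scaling identity for $s'$ forced by the particular choice of $R$.

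Next, I would upgrade to a strong-type bound by Marcinkiewicz interpolation, applied to the weak-type estimate at two nearby exponents $r_0<r<r_1$ inside the open range $\bigl(1,3/(3-\alpha)\bigr)$; the corresponding target spaces $L^{s'(r_0),\infty}$, $L^{s'(r_1),\infty}$ bracket $L^{s'}$, so interpolation delivers $\|I_\alpha f\|_{L^{s'}}\le C\|f\|_{L^r}$. Undoing the initial duality via Hölder then yields
$$\iint\frac{|u(x)||v(y)|}{|x-y|^\alpha}\,dx\,dy=\int|u|\,(I_\alpha|v|)\,dx\le\|u\|_{L^s}\|I_\alpha|v|\|_{L^{s'}}\le C(s,r)\|u\|_{L^s}\|v\|_{L^r}.$$

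The main obstacle in this scheme is not the splitting, which is routine, but the exponent bookkeeping: one must verify that the weak-type target produced by the choice of $R$ is exactly $s'=(1-1/s)^{-1}$ dictated by the HLS scaling identity, since any arithmetic slip lands in a different Lorentz space and breaks interpolation. The hypotheses $s,r>1$ and $0<\alpha<3$ are precisely what rule out the degenerate endpoints $r=1$ and $r=3/(3-\alpha)$. Getting the sharp constant would instead require Lieb's rearrangement proof (Riesz rearrangement inequality together with an explicit computation on symmetric decreasing functions), which I would mention as an alternative but not pursue, since only the form with an unspecified constant $C(s,r)$ is needed in the present paper.
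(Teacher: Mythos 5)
The paper offers no proof of this lemma at all: it is imported verbatim from Lieb and Loss \cite{Lieb2001} as a classical black box, so there is nothing in the text to compare your argument against line by line. Your sketch is correct and is the standard ``real-variable'' proof: reduce by duality and H\"older to the $L^r\to L^{s'}$ bound for the Riesz potential, obtain the weak-type endpoint by splitting the kernel at radius $R$ and optimizing over $R$, then upgrade by Marcinkiewicz interpolation between nearby exponents. The exponent bookkeeping you flag does close: the hypotheses $s,r>1$ force $\frac1r>\frac{3-\alpha}{3}$, i.e.\ $r<3/(3-\alpha)$, so $K_2\in L^{r'}$ and the optimized $R$ produces exactly the weak $L^{s'}$ bound with $\frac1{s'}=\frac1r-\frac{3-\alpha}{3}$. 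It is worth noting that this is a genuinely different route from the one in the cited reference: Lieb and Loss prove the inequality via layer-cake decomposition and the Riesz rearrangement inequality, which also yields the sharp constant in the conformal case, whereas your interpolation argument is shorter and gives only an unspecified $C(s,r)$ --- which, as you correctly observe, is all the present paper uses (indeed only the single instance $s=r=\frac65$, $\alpha=1$).
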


Particularly, if $s=r=\frac65$, then the Hardy-Littlewood-Sobolev inequality implies that
$$\int(\mathcal{K}\ast|u|^2)|u|^2dx\leq\int\left(\frac1{|x|}\ast|u|^2\right)|u|^2dx\leq C\|u\|^4_{L^{\frac{12}5}}.$$

\begin{lemma}[\cite{Berestycki1983}]\label{radial-inequality}
If $u\in L^s(\mathbb{R}^3)$, $1\leq s<+\infty$, is a radial nonincreasing function (i.e. $0\leq u(x)\leq u(y)$ if $|x|\geq|y|$), then one has
$$|u(x)|\leq|x|^{-\frac3{s}}\left(\frac3{|S^2|}\right)^{\frac1{s}}\|u\|_{L^s}, \quad x\neq0,$$
where $|S^2|$ is the area of the unit sphere in $\mathbb{R}^3$.
\end{lemma}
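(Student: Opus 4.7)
The plan is to exploit the monotonicity of $u$ directly by bounding the $L^s$-norm from below using the ball on which $|u|$ dominates its value at the point of interest. Fix any $x\neq 0$ and set $R:=|x|$. By the radial nonincreasing hypothesis, for every $y\in B_R(0)$ we have $|y|\leq R=|x|$, hence $|u(y)|\geq |u(x)|$. This is the only structural property of $u$ that will be used.

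From here the argument is just a volume comparison. I would integrate $|u|^s$ over the ball $B_R(0)$ and estimate from below by the constant $|u(x)|^s$:
\begin{equation*}
\|u\|_{L^s}^{s}\;\geq\;\int_{B_{R}(0)}|u(y)|^{s}\,dy\;\geq\;|u(x)|^{s}\,\bigl|B_{R}(0)\bigr|\;=\;|u(x)|^{s}\,\frac{|S^{2}|}{3}\,R^{3},
\end{equation*}
using that the Lebesgue measure of $B_R(0)\subset\mathbb{R}^3$ equals $\frac{|S^2|}{3}R^3$. Rearranging and taking $s$-th roots yields
\begin{equation*}
|u(x)|\;\leq\;\Bigl(\tfrac{3}{|S^{2}|}\Bigr)^{1/s}R^{-3/s}\|u\|_{L^{s}}\;=\;|x|^{-3/s}\Bigl(\tfrac{3}{|S^{2}|}\Bigr)^{1/s}\|u\|_{L^{s}},
\end{equation*}
which is exactly the claimed inequality.

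There is essentially no obstacle in this proof: the only subtlety is that the monotonicity hypothesis is phrased a.e.\ (since $u\in L^s$), so the pointwise value $|u(x)|$ should be interpreted via a representative of the equivalence class for which the nonincreasing radial structure holds pointwise; this is standard for symmetric-decreasing rearrangements and is the sense in which the statement is used in applications. The condition $s<+\infty$ is needed so that the lower bound $|u(x)|^s\cdot\frac{|S^2|}{3}R^3$ is finite and the comparison is meaningful; no further regularity is required.
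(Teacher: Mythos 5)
Your proof is correct and is exactly the standard volume-comparison argument for this radial lemma; the paper itself only cites \cite{Berestycki1983} without reproducing a proof, and the argument there is the same one you give (bound $\|u\|_{L^s}^s$ from below by $|u(x)|^s\,|B_{|x|}(0)|$ and rearrange). Your remark about choosing the pointwise-monotone representative is also the right caveat, so nothing is missing.
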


The following lemma can be found in Br\'{e}zis and Kato \cite{Brezis-Kato1979}, which is crucial for the $L^{\infty}$-estimate for the solution of Eq.(\ref{main2}).
\begin{lemma}\label{L-infty}
Let $k\in L^{\frac32}(\mathbb{R}^3)$ be a nonnegative function. Then for every $\varepsilon>0$, there exists a constant $C(\varepsilon, k)>0$ such that
\begin{align*}
\int k(x)|u|^2dx\leq\varepsilon\int|\nabla u|^2dx+C(\varepsilon, k)\int|u|^2dx, ~~ \hbox{for all}~ u\in H^1(\mathbb{R}^3).
\end{align*}
\end{lemma}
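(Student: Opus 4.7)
The plan is to split $k$ into a pointwise-bounded part and a part with small $L^{3/2}$-norm, and then to absorb the contribution of the small-norm part into $\|\nabla u\|_{L^2}^2$ via the Sobolev embedding $H^1(\mathbb{R}^3)\hookrightarrow L^6(\mathbb{R}^3)$. This is the standard truncation strategy used in the Brezis--Kato framework.

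More precisely, for each $M>0$ I would decompose $k=k_{\leq M}+k_{>M}$, where $k_{\leq M}(x):=k(x)\chi_{\{k\leq M\}}(x)$ and $k_{>M}:=k-k_{\leq M}$. The bounded piece is handled trivially by
$$\int k_{\leq M}(x)|u|^2\,dx\leq M\int|u|^2\,dx.$$
For the tail, an application of H\"{o}lder's inequality with conjugate exponents $3/2$ and $3$ (noting $|u|^2\in L^3$ whenever $u\in L^6$) followed by the Sobolev inequality $\|u\|_{L^6}\leq S\|\nabla u\|_{L^2}$ yields
$$\int k_{>M}(x)|u|^2\,dx\leq \|k_{>M}\|_{L^{3/2}}\|u\|_{L^6}^2\leq S^2\|k_{>M}\|_{L^{3/2}}\|\nabla u\|_{L^2}^2,$$
where $S$ denotes the Sobolev constant.

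The final step exploits the integrability hypothesis $k\in L^{3/2}(\mathbb{R}^3)$. Since $0\leq k_{>M}\leq k$ pointwise and $k_{>M}(x)\to 0$ for almost every $x$ as $M\to\infty$, the dominated convergence theorem gives $\|k_{>M}\|_{L^{3/2}}\to 0$. Given $\varepsilon>0$, I would then fix $M=M(\varepsilon,k)$ so large that $S^{2}\|k_{>M}\|_{L^{3/2}}\leq\varepsilon$ and set $C(\varepsilon,k):=M(\varepsilon,k)$. Summing the two bounds delivers precisely the stated inequality.

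I do not anticipate a real obstacle here: the argument relies only on truncation, H\"{o}lder, the Sobolev embedding, and dominated convergence, all of which are classical. The only mild care needed is to keep the Sobolev constant $S$ fixed from the outset so that the threshold $M$ depends only on $\varepsilon$ and $k$, and to perform the truncation on $k$ rather than on $u$ so that the estimate is genuinely uniform in $u\in H^{1}(\mathbb{R}^{3})$.
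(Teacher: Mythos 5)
Your proof is correct and complete: the truncation of $k$ at level $M$, the H\"{o}lder/Sobolev estimate on the tail, and the dominated convergence argument giving $\|k_{>M}\|_{L^{3/2}}\to 0$ together yield exactly the stated inequality with $C(\varepsilon,k)=M(\varepsilon,k)$. The paper itself gives no proof, citing Br\'{e}zis--Kato \cite{Brezis-Kato1979} instead, and your argument is precisely the classical one underlying that reference, so there is nothing to add.
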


Denote by $K$ the best Sobolev constant as follows:
\begin{align}\label{best-constant}
K=\inf\limits_{u\in D^{1, 2}(\mathbb{R}^3)\setminus\{0\}}\frac{\int|\nabla u|^2dx}{(\int|u|^6dx)^{\frac13}},
\end{align}
where $D^{1, 2}(\mathbb{R}^3):=\{u: u\in L^6(\mathbb{R}^3), \nabla u\in L^2(\mathbb{R}^3)\}$ is a Banach space endowed with the norm $\|u\|_{D^{1, 2}(\mathbb{R}^3)}=\left(\int|\nabla u|^2dx\right)^{\frac12}$. It is well-known that the embedding
$D^{1, 2}(\mathbb{R}^3)\hookrightarrow L^6(\mathbb{R}^3)$ is continuous. Let $\mathcal{D}$ be the completion of $C^{\infty}_c(\mathbb{R}^3)$ with respect to the norm $\|\cdot\|_{\mathcal{D}}$ induced by the scalar product
\begin{align*}
\langle\varphi, \psi\rangle_{\mathcal{D}}:=\int\nabla\varphi\cdot\nabla\psi dx+a^2\int\Delta\varphi\Delta\psi dx.
\end{align*}
Then $\mathcal{D}$ is a Hilbert space continuously embedded into $D^{1, 2}(\mathbb{R}^3)$ and consequently in $L^6(\mathbb{R}^3)$. The characterization of the space $\mathcal{D}$ can be stated as follows.

\begin{lemma}[\cite{dAvenia2019}]\label{continuously-embedded}
The Hilbert space $\mathcal{D}$ is continuously embedded in $L^{\infty}(\mathbb{R}^3)$.
\end{lemma}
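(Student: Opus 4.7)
The plan is to establish the embedding via the Fourier transform, exploiting the fact that the inner product defining $\mathcal{D}$ is exactly the weighted $L^2$-inner product on $\hat{\varphi}$ with weight $|\xi|^2+a^2|\xi|^4$. This weight grows like $|\xi|^4$ at infinity (providing decay of $\hat\varphi$ through Plancherel) and behaves like $|\xi|^2$ near the origin; in dimension three both regimes are compatible with a Cauchy--Schwarz/Fourier-inversion argument.

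Concretely, by density it suffices to prove the estimate for $\varphi\in C^{\infty}_c(\mathbb{R}^3)$, for which $\hat{\varphi}$ is Schwartz. Fourier inversion gives the pointwise bound $\|\varphi\|_{L^{\infty}}\leq (2\pi)^{-3/2}\|\hat{\varphi}\|_{L^1}$. Writing
$$|\hat{\varphi}(\xi)|=\frac{1}{\sqrt{|\xi|^2+a^2|\xi|^4}}\cdot\sqrt{|\xi|^2+a^2|\xi|^4}\,|\hat{\varphi}(\xi)|$$
and applying Cauchy--Schwarz, followed by Plancherel applied to $\nabla\varphi$ and $\Delta\varphi$, I would obtain
$$\|\hat{\varphi}\|_{L^1}\leq \Bigl(\int\frac{d\xi}{|\xi|^2+a^2|\xi|^4}\Bigr)^{1/2}\Bigl(\int(|\xi|^2+a^2|\xi|^4)|\hat{\varphi}(\xi)|^2\,d\xi\Bigr)^{1/2}=\Bigl(\int\frac{d\xi}{|\xi|^2+a^2|\xi|^4}\Bigr)^{1/2}\|\varphi\|_{\mathcal{D}}.$$
Passing to spherical coordinates (using $d\xi=4\pi r^2\,dr$ in $\mathbb{R}^3$), the weight integral collapses to
$$\int_{\mathbb{R}^3}\frac{d\xi}{|\xi|^2+a^2|\xi|^4}=4\pi\int_0^{\infty}\frac{dr}{1+a^2r^2}=\frac{2\pi^2}{a},$$
yielding the explicit constant $C(a)=(2\pi)^{-3/2}\sqrt{2\pi^2/a}$ and hence $\|\varphi\|_{L^{\infty}}\leq C(a)\|\varphi\|_{\mathcal{D}}$ on $C^{\infty}_c(\mathbb{R}^3)$, which extends to all of $\mathcal{D}$ by completion.

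The main point of care -- more a subtlety than a genuine obstacle -- is the finiteness of the weight integral, which is dimension-critical: the factor $r^{n-3}/(1+a^2r^2)$ arising in $\mathbb{R}^n$ is simultaneously integrable near $0$ and near $\infty$ precisely when $n=3$, so the argument is tailored to the setting of the paper. A secondary bookkeeping issue is that, since $\mathcal{D}$ is defined as an abstract completion, one should verify that the $L^{\infty}$-estimate just proved extends representatively: any $\mathcal{D}$-Cauchy sequence $\{\varphi_n\}\subset C^{\infty}_c(\mathbb{R}^3)$ is automatically Cauchy in $L^{\infty}$ and, combined with the already-noted continuous embedding $\mathcal{D}\hookrightarrow L^6(\mathbb{R}^3)$, this identifies every element of $\mathcal{D}$ with a genuine bounded measurable function for which the inequality holds.
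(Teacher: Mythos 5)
Your proof is correct and is essentially the argument given in the cited reference \cite{dAvenia2019} (the paper itself only quotes the result): Fourier inversion, Cauchy--Schwarz against the weight $|\xi|^2+a^2|\xi|^4$, Plancherel, and the dimension-critical finiteness of $\int_{\mathbb{R}^3}(|\xi|^2+a^2|\xi|^4)^{-1}d\xi=2\pi^2/a$ in $\mathbb{R}^3$. The final density/representative step is also handled correctly.
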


\begin{lemma}[\cite{dAvenia2019}]\label{dense}
The space $C^{\infty}_c(\mathbb{R}^3)$ is dense in $\mathcal{A}$, where
$$\mathcal{A}:=\{\phi\in D^{1, 2}(\mathbb{R}^3):~ \Delta\phi\in L^2(\mathbb{R}^3)\}$$
normed by $\sqrt{\langle\phi, \phi\rangle_{\mathcal{D}}}$ and, consequently $\mathcal{D}=\mathcal{A}$.
\end{lemma}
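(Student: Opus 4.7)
The plan is the classical two-step approximation: truncate by a smooth cutoff, then mollify. Given $\phi\in\mathcal{A}$, fix $\eta\in C^{\infty}_c(\mathbb{R}^3)$ with $\eta\equiv 1$ on $B_1$, $\eta\equiv 0$ outside $B_2$, and set $\eta_R(x):=\eta(x/R)$, so that $|\nabla\eta_R|\leq C/R$ and $|\Delta\eta_R|\leq C/R^2$, both supported on the annulus $A_R:=B_{2R}\setminus B_R$. Since $\eta_R\phi$ has compact support and $\Delta(\eta_R\phi)\in L^2$, a standard mollification $\rho_{\varepsilon}\ast(\eta_R\phi)\in C^{\infty}_c(\mathbb{R}^3)$ converges to $\eta_R\phi$ in the $\mathcal{D}$-norm as $\varepsilon\to 0$ (convolution commutes with derivatives and $L^2$-mollification is routine). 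The whole problem therefore reduces to showing $\eta_R\phi\to\phi$ in the $\mathcal{D}$-norm as $R\to\infty$, after which a diagonal argument produces the desired approximating sequence in $C^{\infty}_c(\mathbb{R}^3)$.

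For the truncation step I would expand
\[
\nabla(\eta_R\phi) = \eta_R\nabla\phi + \phi\nabla\eta_R, \qquad \Delta(\eta_R\phi) = \eta_R\Delta\phi + 2\nabla\eta_R\cdot\nabla\phi + \phi\Delta\eta_R.
\]
The principal terms $\eta_R\nabla\phi$ and $\eta_R\Delta\phi$ converge to $\nabla\phi$ and $\Delta\phi$ in $L^2$ by dominated convergence, while $\|\nabla\eta_R\cdot\nabla\phi\|_{L^2}^2\leq CR^{-2}\|\nabla\phi\|_{L^2(A_R)}^2\to 0$ since $\nabla\phi\in L^2$. The delicate terms are $\phi\nabla\eta_R$ and $\phi\Delta\eta_R$, because a priori $\phi$ need not lie in $L^2$. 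To handle them I invoke the continuous embedding $D^{1,2}(\mathbb{R}^3)\hookrightarrow L^6(\mathbb{R}^3)$ and Hölder's inequality with $|A_R|\sim R^3$:
\[
\|\phi\nabla\eta_R\|_{L^2}^2 \leq \frac{C}{R^2}\|\phi\|_{L^6(A_R)}^2|A_R|^{2/3}\leq C\|\phi\|_{L^6(A_R)}^2 \to 0,
\]
and an analogous estimate (with an extra factor $R^{-2}$ to spare) controls $\phi\Delta\eta_R$. This establishes the density of $C^{\infty}_c(\mathbb{R}^3)$ in $\mathcal{A}$.

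For the identification $\mathcal{D}=\mathcal{A}$, it suffices to check that $\mathcal{A}$ is itself complete with respect to $\|\cdot\|_{\mathcal{D}}$. Indeed, any Cauchy sequence $(\phi_n)\subset\mathcal{A}$ has $\nabla\phi_n$ Cauchy in $L^2$, so $\phi_n\to\phi$ in $D^{1,2}(\mathbb{R}^3)$ and in $L^6$; meanwhile $\Delta\phi_n$ is Cauchy in $L^2$ with some limit $\psi$, and passing to the distributional limit of $\Delta\phi_n$ one obtains $\psi=\Delta\phi$, whence $\phi\in\mathcal{A}$. Combined with the density just proved, the closure of $C^{\infty}_c(\mathbb{R}^3)$ in the $\mathcal{D}$-norm — which is by construction $\mathcal{D}$ — coincides with $\mathcal{A}$.

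The main obstacle is the pair of cross terms $\phi\nabla\eta_R$ and $\phi\Delta\eta_R$, since $\phi$ is not assumed to be in any $L^p$ space besides what the Sobolev embedding gives. The estimate succeeds only because the annular volume $|A_R|^{2/3}\sim R^2$ is exactly balanced by $|\nabla\eta_R|^2\sim R^{-2}$; without the $L^6$-control supplied by the $D^{1,2}$ embedding the cutoff scheme would break down, and one would instead have to rely on a Hardy-type inequality. All other ingredients — dominated convergence, standard mollification, the completeness argument — are standard once the cross-term estimate is in place.
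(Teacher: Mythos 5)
Your proof is correct; the paper itself states this lemma without proof, citing \cite{dAvenia2019}, and your cutoff-plus-mollification argument --- with the decisive balance between $|\nabla\eta_R|^2\sim R^{-2}$ and $|A_R|^{2/3}\sim R^2$ via the $D^{1,2}\hookrightarrow L^6$ embedding, followed by the completeness check identifying $\mathcal{A}$ with the abstract completion $\mathcal{D}$ --- is essentially the same argument given in that reference. No gaps.
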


As \cite{dAvenia2019} indicates, the next fundamental properties hold.

\begin{lemma}[\cite{dAvenia2019}]\label{property1}
For all $y\in\mathbb{R}^3$, $\mathcal{K}(\cdot-y)$ solves in the sense of distributions
$$-\Delta\phi+a^2\Delta^2\phi=4\pi\delta_y.$$
Moreover,
\begin{itemize}
 \item[(1)] if $f\in L^1_{loc}(\mathbb{R}^3)$ and, for a.e. $x\in\mathbb{R}^3$, the map $y\in\mathbb{R}^3\mapsto\frac{f(y)}{|x-y|}$ is summable, then $\mathcal{K}\ast f\in L^1_{loc}(\mathbb{R}^3)$;
 \item[(2)] if $f\in L^s(\mathbb{R}^3)$ with $1\leq s<\frac32$, then $\mathcal{K}\ast f\in L^q(\mathbb{R}^3)$ for $q\in(\frac{3s}{3-2s}, +\infty]$.
\end{itemize}
In both cases $\mathcal{K}\ast f$ solves
$$-\Delta\phi+a^2\Delta^2\phi=4\pi f$$
in the sense of distributions, and we have the following distributional derivatives
$$\nabla(\mathcal{K}\ast f)=(\nabla\mathcal{K})\ast f \quad\hbox{and}\quad \Delta(\mathcal{K}\ast f)=(\Delta\mathcal{K})\ast f \quad\hbox{a.e. in}~~ \mathbb{R}^3.$$
\end{lemma}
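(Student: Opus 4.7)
The plan is to prove the three claims in order, relying on the decomposition
$$\mathcal{K}(x) = \frac{1}{|x|} - \frac{e^{-|x|/a}}{|x|},$$
which expresses $\mathcal{K}$ as the difference of the Newtonian kernel and a Yukawa potential, together with the two classical distributional identities $-\Delta(1/|x|) = 4\pi \delta_0$ and $(-\Delta + 1/a^2)(e^{-|x|/a}/|x|) = 4\pi \delta_0$.

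\textbf{Step 1 (fundamental solution).} I will apply $-\Delta$ to the decomposition: the two Dirac masses cancel, leaving $-\Delta \mathcal{K} = a^{-2}\, e^{-|x|/a}/|x|$. Since $a^2\Delta^2 \mathcal{K} = -a^2\Delta(-\Delta\mathcal{K}) = -\Delta(e^{-|x|/a}/|x|)$, the Yukawa identity in the form $-\Delta(e^{-|x|/a}/|x|) = 4\pi\delta_0 - a^{-2}\,e^{-|x|/a}/|x|$ gives
$$(-\Delta + a^2\Delta^2)\mathcal{K} = a^{-2}\frac{e^{-|x|/a}}{|x|} + 4\pi\delta_0 - a^{-2}\frac{e^{-|x|/a}}{|x|} = 4\pi\delta_0$$
in $\mathcal{D}'(\mathbb{R}^3)$. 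Translating by $y$ yields the first displayed identity in the lemma.

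\textbf{Step 2 (integrability and item (1)).} The elementary bounds $0 \leq \mathcal{K}(x) \leq \min(1/a,\, 1/|x|)$ follow from $1-e^{-t} \leq t$ (giving $\mathcal{K} \leq 1/a$) and $1-e^{-|x|/a}\leq 1$ (giving $\mathcal{K}\leq 1/|x|$). Hence $\mathcal{K} \in L^\infty_{\mathrm{loc}}$, and since $\mathcal{K}$ decays like $|x|^{-1}$ at infinity, $\mathcal{K}\in L^r(\mathbb{R}^3)$ precisely when $r\in(3,\infty]$. For item (1), the pointwise bound $\mathcal{K}(x-y)\leq 1/|x-y|$ combined with the summability hypothesis immediately gives $(\mathcal{K}*f)(x)$ finite a.e.; Fubini applied on any compact $K\subset\mathbb{R}^3$, using that $y\mapsto \int_K 1/|x-y|\,dx$ is locally bounded, then yields $\mathcal{K}*f \in L^1_{\mathrm{loc}}$.

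\textbf{Step 3 (item (2) via Young).} Given $f\in L^s$ with $1\leq s<3/2$ and target exponent $q\in(3s/(3-2s),\infty]$, I will define $r$ by $1/r = 1 + 1/q - 1/s$. A short algebraic check shows $q > 3s/(3-2s)$ is equivalent to $r>3$, and $q=\infty$ corresponds to $r=s'\in(3,\infty]$. Since $\mathcal{K}\in L^r$ for such $r$ by Step 2, Young's convolution inequality yields $\mathcal{K}*f\in L^q$ with $\|\mathcal{K}*f\|_{L^q}\leq \|\mathcal{K}\|_{L^r}\|f\|_{L^s}$.

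\textbf{Step 4 (PDE and derivative formulas).} In either case $\mathcal{K}*f\in L^1_{\mathrm{loc}}$, so it defines a distribution. For $\varphi\in C^\infty_c(\mathbb{R}^3)$, Fubini and Step 1 applied to the smooth test function $\varphi$ give
$$\langle (-\Delta+a^2\Delta^2)(\mathcal{K}*f),\varphi\rangle = \int f\,\bigl((-\Delta+a^2\Delta^2)\mathcal{K}\bigr)*\varphi\,dx = 4\pi \int f\varphi\,dx,$$
whence $-\Delta(\mathcal{K}*f)+a^2\Delta^2(\mathcal{K}*f)=4\pi f$ in $\mathcal{D}'$. For the commutation of $\nabla$ and $\Delta$ with the convolution, I will verify from the explicit formula $F(r) = (1-e^{-r/a})/r$ that $\nabla\mathcal{K}$ and $\Delta\mathcal{K}$ are bounded near $0$ and decay at infinity, so they lie in $L^r$ for an appropriate range of $r$; then the standard Fubini argument (test against $\varphi\in C^\infty_c$ and integrate by parts on the smooth factor) gives $\nabla(\mathcal{K}*f)=(\nabla\mathcal{K})*f$ and $\Delta(\mathcal{K}*f)=(\Delta\mathcal{K})*f$ a.e.

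\textbf{Expected main obstacle.} The cleanest but most technical piece is Step 1, where I must treat distributional second derivatives of $1/|x|$ and $e^{-|x|/a}/|x|$ correctly so that the two Dirac masses cancel exactly. The subtlest bookkeeping in the remaining steps is matching the range of $q$ in item (2) to $r>3$ via Young's relation and recognizing that the boundary case $q=3s/(3-2s)$ is excluded precisely because $\mathcal{K}\notin L^3(\mathbb{R}^3)$; keeping this correspondence transparent will be the main thing to watch.
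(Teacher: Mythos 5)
The paper does not prove this lemma; it is quoted verbatim from \cite{dAvenia2019}, and your decomposition $\mathcal{K}=\frac1{|x|}-\frac{e^{-|x|/a}}{|x|}$ into Newtonian minus Yukawa kernels, with the cancellation of the two Dirac masses, is exactly the route taken in that reference, so your overall strategy is the right one and Steps 1--3 are essentially correct (the Young's-inequality bookkeeping in Step 3, matching $q>3s/(3-2s)$ to $\mathcal{K}\in L^r$ for $r>3$, checks out).

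Two points need repair. First, in Step 4 you assert that $\Delta\mathcal{K}$ is bounded near $0$; it is not. From your own Step 1, $\Delta\mathcal{K}=-a^{-2}e^{-|x|/a}/|x|$ as a distribution (the Dirac masses cancel, so there is no singular part), which blows up like $a^{-2}|x|^{-1}$ at the origin. Equivalently, expanding $F(r)=(1-e^{-r/a})/r=\frac1a-\frac{r}{2a^2}+O(r^2)$ shows $\mathcal{K}$ has a conical singularity $-|x|/(2a^2)$, whose Laplacian is $-1/(a^2|x|)$. The commutation argument survives because $\Delta\mathcal{K}\in L^p(\mathbb{R}^3)$ for every $p\in[1,3)$ (locally $|x|^{-1}$ is $p$-integrable for $p<3$, and the decay at infinity is exponential), but the justification must be ``integrable with an exponent compatible with Young/Fubini,'' not ``bounded.'' Second, in item (1) the phrase ``$y\mapsto\int_K|x-y|^{-1}dx$ is locally bounded'' does not close the Fubini step when $f$ is merely $L^1_{\mathrm{loc}}$: boundedness of that kernel controls only $\int_{|y|\le M}|f(y)|\int_K|x-y|^{-1}dx\,dy$. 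For the tail you must use that $\int_K|x-y|^{-1}dx\le C(K)/|x_0-y|$ for $|y|$ large and any fixed $x_0$, and then invoke the hypothesis that $y\mapsto f(y)/|x_0-y|$ is summable for (at least one, hence a.e.) $x_0$. With these two corrections the proof is complete and coincides with the argument in \cite{dAvenia2019}.
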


Denote
$$\phi_u:=\mathcal{K}\ast|u|^2=\int\frac{1-e^{-\frac{|x-y|}a}}{|x-y|}|u(y)|^2dy,$$
then we have the following useful properties.

\begin{lemma}[\cite{dAvenia2019}]\label{property2}
For every $u\in H^1({\mathbb{R}^3})$ we have:
\begin{itemize}
 \item[(1)] for every $y\in\mathbb{R}^3$, $\phi_{u(\cdot+y)}=\phi_u(\cdot+y)$;
 \item[(2)] $\phi_u\geq 0$;
 \item[(3)] for every $s\in(3, +\infty]$, $\phi_u\in L^s(\mathbb{R}^3)\cap C_0(\mathbb{R}^3)$;
 \item[(4)] for every $s\in(\frac32, +\infty]$, $\nabla\phi_u=\nabla\mathcal{K}\ast |u|^2\in L^s(\mathbb{R}^3)\cap C_0(\mathbb{R}^3)$;
 \item[(5)] $\phi_u\in\mathcal{D}$;
 \item[(6)] $\|\phi_u\|_{L^6}\leq C\|u\|^2_{H^1}$;
 \item[(7)] $\phi_u$ is the unique minimizer of the functional
  $$F(\phi)=\frac12\|\nabla\phi\|^2_{L^2}+\frac{a^2}2\|\Delta\phi\|^2_{L^2}-\int\phi |u|^2dx, \quad \phi\in\mathcal{D}.$$
\end{itemize}
Moreover, if $v_n\rightharpoonup v$ in $H^1(\mathbb{R}^3)$, then $\phi_{v_n}\rightharpoonup\phi_v$ in $\mathcal{D}$.
\end{lemma}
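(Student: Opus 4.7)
My plan is to establish the seven items plus the weak continuity by leveraging the explicit form $\mathcal{K}(x) = (1-e^{-|x|/a})/|x|$, Young's convolution inequality, and the distributional PDE from Lemma \ref{property1}.

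First I would dispose of (1) and (2) immediately: translation invariance (1) is a change of variable in the convolution integral, and positivity (2) holds because $\mathcal{K}\geq 0$ pointwise (since $1-e^{-t}\geq 0$ for $t\geq 0$). For (3) and (4), a direct pointwise analysis of $\mathcal{K}$ shows it is bounded near the origin (with limit $1/a$) and decays like $1/|x|$ at infinity, so $\mathcal{K}\in L^s(\mathbb{R}^3)$ exactly for $s\in(3,\infty]$; similarly $|\nabla\mathcal{K}(x)|\lesssim |x|^{-2}$ at infinity and is bounded near the origin, giving $\nabla\mathcal{K}\in L^s$ for $s>3/2$. Since $|u|^2\in L^1$ whenever $u\in H^1$, Young's convolution inequality ($L^s\ast L^1\subset L^s$) immediately yields the claimed integrability; the $C_0$ conclusion follows by approximating each kernel with a compactly supported truncation and applying dominated convergence, since both $\mathcal{K}$ and $\nabla\mathcal{K}$ vanish at infinity.

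For (5)--(6), I would invoke the distributional equation from Lemma \ref{property1} and test formally against $\phi_u$ itself (justified by $C^\infty_c$-approximation via Lemma \ref{dense}) to obtain the energy identity
\begin{align*}
\|\nabla\phi_u\|^2_{L^2}+a^2\|\Delta\phi_u\|^2_{L^2}=4\pi\int\phi_u|u|^2\,dx.
\end{align*}
H\"older's inequality together with the Sobolev embeddings $D^{1,2}\hookrightarrow L^6$ and $H^1\hookrightarrow L^{12/5}$ then bounds the right-hand side by $C\|\nabla\phi_u\|_{L^2}\|u\|^2_{H^1}$, which yields both $\phi_u\in\mathcal{D}$ and the quantitative bound (6) after dividing through. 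For (7), the functional $F$ is the sum of two non-negative quadratic terms and a linear one; the same H\"older--Sobolev estimate shows it is coercive on $\mathcal{D}$ and strictly convex, hence admits a unique minimizer, whose Euler--Lagrange equation coincides with the distributional equation already satisfied by $\phi_u$.

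Finally, for the weak continuity statement, boundedness of $\{v_n\}$ in $H^1$ combined with (6) and the energy identity yields $\{\phi_{v_n}\}$ bounded in $\mathcal{D}$, so along a subsequence $\phi_{v_n}\rightharpoonup\psi_0$. Testing against $\xi\in C^\infty_c(\mathbb{R}^3)$ through the PDE characterization gives
\begin{align*}
\langle\phi_{v_n},\xi\rangle_{\mathcal{D}}=4\pi\int\xi|v_n|^2\,dx,
\end{align*}
and Rellich--Kondrachov compactness $v_n\to v$ in $L^2(\mathrm{supp}\,\xi)$ allows passing to the limit to produce $\langle\psi_0,\xi\rangle_{\mathcal{D}}=\langle\phi_v,\xi\rangle_{\mathcal{D}}$ for every such $\xi$; density of $C^\infty_c$ in $\mathcal{D}$ (Lemma \ref{dense}) identifies $\psi_0=\phi_v$, and a standard subsequence argument promotes the convergence to the original sequence. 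The principal obstacle lies in this last step, since $v_n\rightharpoonup v$ in $H^1$ does not by itself yield any weak convergence of $|v_n|^2$ to $|v|^2$ tested against arbitrary $\mathcal{D}$-functions; the PDE characterization is essential, as it reduces the matter to a pairing against compactly supported smooth test functions where Rellich compactness applies, and density then propagates the convergence to all of $\mathcal{D}$.
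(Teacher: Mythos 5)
The paper offers no proof of this lemma: it is imported verbatim from \cite{dAvenia2019}, so the only ``in-paper argument'' is the citation, and your job here is really to reconstruct the proof of d'Avenia--Siciliano. Your reconstruction does essentially that and is correct in outline: (1)--(2) by change of variables and positivity of the kernel, (3)--(4) by the pointwise bounds on $\mathcal{K}$ and $\nabla\mathcal{K}$ plus Young's inequality (your integrability ranges are right), (7) by coercivity and strict convexity, and the weak continuity by boundedness of $\{\phi_{v_n}\}$ in $\mathcal{D}$, Rellich compactness on $\mathrm{supp}\,\xi$, density of $C^\infty_c$, and the subsequence trick.

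The one step whose logical order needs repair is (5)--(6). You propose to test the distributional equation against $\phi_u$ itself, ``justified by $C^\infty_c$-approximation via Lemma \ref{dense}''; but Lemma \ref{dense} only lets you approximate elements of $\mathcal{A}=\mathcal{D}$ in the $\mathcal{D}$-norm, so this presupposes $\phi_u\in\mathcal{D}$, which is precisely item (5). Two clean fixes: (a) define the solution by the Riesz representation theorem applied to the bounded linear functional $\xi\mapsto 4\pi\int\xi|u|^2dx$ on $\mathcal{D}$ (this is how the paper itself introduces $\phi_u$ just before (\ref{main-phi}), and how \cite{dAvenia2019} proceeds), and only afterwards identify the Riesz solution with $\mathcal{K}\ast|u|^2$ via Lemma \ref{property1}; or (b) observe that the Dirac masses of the Newtonian and Yukawa pieces of $\mathcal{K}$ cancel, so that $\Delta\mathcal{K}=-a^{-2}e^{-|x|/a}/|x|$ is an honest function lying in $L^2(\mathbb{R}^3)$, whence $\Delta\phi_u=(\Delta\mathcal{K})\ast|u|^2\in L^2$ by Young's inequality and $\phi_u\in\mathcal{A}=\mathcal{D}$ follows directly, after which your energy identity is legitimate and the estimate $\|\phi_u\|^2_{\mathcal{D}}\le C\|\phi_u\|_{\mathcal{D}}\|u\|^2_{H^1}$ delivers both (5) and (6). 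With either repair the proposal is a faithful and complete substitute for the cited proof.
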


\begin{lemma}[\cite{dAvenia2019}]\label{BL}
Let $p\in(2, 6)$ and $\{u_n\}^{\infty}_{n=1}$ be a sequence satisfying that $u_n\rightharpoonup u$ weakly in $H^1(\mathbb{R}^3)$. Then, for any $\varphi\in H^1(\mathbb{R}^3)$,
$$\int\phi_{u_n}u_n\varphi dx\rightarrow\int\phi_u u\varphi dx$$
and
$$\int|u_n|^{p-2}u_n\varphi dx\rightarrow\int|u|^{p-2}u\varphi dx,$$
as $n\rightarrow\infty$.
\end{lemma}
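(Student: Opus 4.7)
My plan is to prove the two convergences independently, combining weak convergence of the ``outer'' factor with strong convergence of the product it is tested against, where the latter is extracted from compact Sobolev embeddings on bounded sets together with a tail estimate. For the pointwise nonlinearity, I would first note that by the Sobolev embedding $H^1(\mathbb{R}^3)\hookrightarrow L^p(\mathbb{R}^3)$ for $p\in(2,6)$, the sequence $\{|u_n|^{p-2}u_n\}$ is bounded in $L^{p/(p-1)}(\mathbb{R}^3)$. Compact embeddings $H^1(B_R)\hookrightarrow L^p(B_R)$ combined with a diagonal extraction yield a subsequence along which $u_n\to u$ almost everywhere, so $|u_n|^{p-2}u_n\to|u|^{p-2}u$ a.e.\ on $\mathbb{R}^3$. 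The classical principle that a.e.\ convergence together with uniform $L^q$-boundedness for $1<q<\infty$ implies weak $L^q$-convergence then gives $|u_n|^{p-2}u_n\rightharpoonup|u|^{p-2}u$ in $L^{p/(p-1)}(\mathbb{R}^3)$. Testing against any $\varphi\in H^1(\mathbb{R}^3)\hookrightarrow L^p(\mathbb{R}^3)$ closes this part; a standard subsequence argument upgrades convergence from a subsequence to the full sequence.

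For the nonlocal term I would split
\begin{align*}
\int\phi_{u_n}u_n\varphi\,dx-\int\phi_u u\varphi\,dx
=\int\phi_u(u_n-u)\varphi\,dx+\int(\phi_{u_n}-\phi_u)\,u_n\varphi\,dx,
\end{align*}
and treat the two summands separately. The first vanishes because Lemma~\ref{continuously-embedded} gives $\phi_u\in L^\infty(\mathbb{R}^3)$, hence $\phi_u\varphi\in L^2(\mathbb{R}^3)$; since the embedding $H^1\hookrightarrow L^2$ is continuous, the weak $H^1$-convergence $u_n\rightharpoonup u$ passes to weak $L^2$-convergence, and the duality pairing against $\phi_u\varphi$ settles the term.

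The second summand is the delicate one, and is where I expect the main obstacle. Lemma~\ref{property2} yields $\phi_{u_n}\rightharpoonup\phi_u$ in $\mathcal{D}$ and hence weakly in $L^6(\mathbb{R}^3)$, so it suffices to show that $u_n\varphi\to u\varphi$ strongly in $L^{6/5}(\mathbb{R}^3)$ (the exponent dictated by H\"older, with $u_n\in L^3$ and $\varphi\in L^2$). I would obtain this by a cutoff: given $\varepsilon>0$, choose $R$ large enough that $\|\varphi\|_{L^2(\mathbb{R}^3\setminus B_R)}<\varepsilon$; then H\"older controls the tail $\|(u_n-u)\varphi\|_{L^{6/5}(\mathbb{R}^3\setminus B_R)}\leq(\|u_n\|_{L^3}+\|u\|_{L^3})\varepsilon\leq C\varepsilon$ uniformly in $n$. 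On $B_R$ the compact embedding $H^1(B_R)\hookrightarrow L^3(B_R)$ provides $u_n\to u$ in $L^3(B_R)$, so H\"older once more gives $\|(u_n-u)\varphi\|_{L^{6/5}(B_R)}\to 0$. The obstacle is precisely the loss of compactness at infinity: the nonlocal product $\phi_{u_n}u_n\varphi$ has no built-in spatial decay, and the tail can be controlled only by isolating it in terms of the fixed test function $\varphi$ alone, after which compactness on the remaining bounded region finishes the argument.
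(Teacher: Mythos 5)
Your proof is correct. The paper states this lemma as a citation to \cite{dAvenia2019} and gives no proof of its own; your argument --- a.e.\ convergence plus the uniform $L^{p/(p-1)}$ bound for the local term, and the weak-times-strong pairing (with $\phi_{u_n}\rightharpoonup\phi_u$ in $L^6$ against $u_n\varphi\to u\varphi$ in $L^{6/5}$, obtained by the cutoff-plus-compact-embedding splitting) for the nonlocal term --- is exactly the standard route taken in the cited reference.
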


The next lemma is regarding the asymptotic of the nonlocal term, which is helpful to prove the asymptotic behavior of solutions.
\begin{lemma}[\cite{dAvenia2019}]\label{asymptotic-lemma}
Assume $f^0\in L^{\frac65}(\mathbb{R}^3)$, $\{f^a\}_{a\in(0, 1)}\subset L^{\frac65}(\mathbb{R}^3)$. Let
$$\phi^0\in D^{1, 2}(\mathbb{R}^3)~ \hbox{be the unique solution of}~ -\Delta\phi=f^0 ~\hbox{in}~ \mathbb{R}^3$$
and
$$\phi^a\in\mathcal{D} ~\hbox{be the unique solution of}~ -\Delta\phi+a^2\Delta^2\phi=f^a ~\hbox{in}~ \mathbb{R}^3.$$
As $a\rightarrow0$, we have
\begin{itemize}
 \item[(1)] if $f^a\rightharpoonup f^0$ in $L^{\frac65}(\mathbb{R}^3)$, then $\phi^a\rightharpoonup\phi^0$ in $D^{1, 2}(\mathbb{R}^3)$;
 \item[(2)] if $f^a\rightarrow f^0$ in $L^{\frac65}(\mathbb{R}^3)$, then $\phi^a\rightarrow\phi^0$ in $D^{1, 2}(\mathbb{R}^3)$ and $a\Delta\phi^a\rightarrow0$ in $L^2(\mathbb{R}^3)$.
\end{itemize}
\end{lemma}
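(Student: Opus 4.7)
The plan is to promote weak to strong convergence by combining uniform $H^1$-bounds on $\{u^a\}$ coming from the mountain-pass structure, the compact radial embedding $H^1_r\hookrightarrow L^q$ for $q\in(2,6)$, and Lemma \ref{asymptotic-lemma} applied with $f^a=4\pi|u^a|^2$, $f^0=4\pi|u^0|^2$.

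First, I would show that $\{u^a\}$ is uniformly bounded in $H^1_r(\mathbb{R}^3)$. As $a\downarrow 0$ the kernel $\mathcal{K}_a(x)=(1-e^{-|x|/a})/|x|$ increases pointwise to $1/|x|$, so $J_a(v)$ is monotone in $a$ and converges pointwise to the Schr\"odinger-Poisson-Slater energy $J_0(v)$. Fixing $v_0$ with $J_0(v_0)<0$ and using the ray $\gamma_0(t)=tv_0$, which lies in $\Gamma_a$ for all sufficiently small $a$ since $J_a(v_0)\to J_0(v_0)<0$, gives $c_a\leq\max_{t\in[0,1]}J_1(tv_0)<\infty$ uniformly for $a\in(0,1]$. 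Combined with the Nehari identity $\langle J_a'(u^a),u^a\rangle=0$, the algebraic combination
\begin{align*}
c_a=J_a(u^a)-\frac14\langle J_a'(u^a),u^a\rangle=\frac14\|\nabla u^a\|_{L^2}^2+\frac{\omega}{4}\|u^a\|_{L^2}^2+\Bigl(\frac14-\frac1p\Bigr)\|u^a\|_{L^p}^p,
\end{align*}
whose $L^p$-coefficient is nonnegative in the regime $p\in[4,6]$ of Theorem \ref{theorem2}, yields $\|u^a\|_{H^1}^2\leq 4c_a/\min(1,\omega)\leq C$.

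Next, I would extract a subsequence with $u^a\rightharpoonup u^0$ in $H^1_r$; compactness of the radial embedding into $L^q$ for $q\in(2,6)$ gives strong $L^q$-convergence, and H\"older's inequality gives $|u^a|^2\to|u^0|^2$ in $L^{6/5}(\mathbb{R}^3)$. Lemma \ref{asymptotic-lemma} then delivers $\phi^a\to\phi^0:=\tfrac{1}{|x|}\ast|u^0|^2$ strongly in $D^{1,2}_r(\mathbb{R}^3)$ (hence in $L^6$) together with $a\Delta\phi^a\to 0$ in $L^2$. Passing to the limit in the weak form
\begin{align*}
\int\nabla u^a\cdot\nabla v\,dx+\omega\int u^a v\,dx-\mu\int\phi^a u^a v\,dx=\int|u^a|^{p-2}u^a v\,dx,\quad v\in C^{\infty}_c,
\end{align*}
is then straightforward: the linear terms use weak convergence; the nonlinear term uses strong $L^p$-convergence; the nonlocal term is decomposed as $\phi^a u^a v=(\phi^a-\phi^0)u^a v+\phi^0 u^a v$ and both summands are controlled by H\"older with exponents $(6,3,2)$. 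This shows $u^0$ weakly solves Eq.(\ref{main3}).

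Finally, to upgrade to strong convergence I would first rule out $u^0\equiv 0$: if it vanished, the right-hand side of the Nehari identity $\min(1,\omega)\|u^a\|_{H^1}^2\leq\mu\int\phi^a|u^a|^2+\|u^a\|_{L^p}^p$ would tend to $0$ via the compact radial embeddings into $L^{12/5}$ and $L^p$, forcing $u^a\to 0$ in $H^1$ and $c_a\to 0$, which contradicts the uniform mountain-pass lower bound on $c_a$ (whose standard proof uses only the Hardy-Littlewood-Sobolev constant of Lemma \ref{HLS} and is hence independent of $a$). Then, testing the respective equations against $u^a$ and $u^0$ and using $\int\phi^a|u^a|^2\to\int\phi^0|u^0|^2$ (from $\phi^a\to\phi^0$ in $L^6$ and $|u^a|^2\to|u^0|^2$ in $L^{6/5}$) and $\|u^a\|_{L^p}^p\to\|u^0\|_{L^p}^p$, I obtain $\|u^a\|_{H^1}\to\|u^0\|_{H^1}$, which combined with weak convergence in the Hilbert space $H^1_r$ upgrades to strong convergence $u^a\to u^0$. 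The main obstacle I anticipate is the Sobolev-critical case $p=6$: the embedding $H^1_r\hookrightarrow L^6$ is not compact, so neither $\|u^a\|_{L^6}^6\to\|u^0\|_{L^6}^6$ nor strong $L^{6/5}$-convergence of $|u^a|^2$ is automatic. To rescue it I would exploit the sub-threshold estimate $c_a<\tfrac13 K^{3/2}$ (uniform in $a$ by monotonicity of $c_a$) together with a Brezis-Lieb decomposition of $w^a:=u^a-u^0$ to exclude concentration of a bubble, and Lemma \ref{radial-inequality} to exclude loss of mass at infinity, thereby restoring the compactness needed to close the argument.
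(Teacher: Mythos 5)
Your proposal does not prove the statement in question. The statement is Lemma \ref{asymptotic-lemma}, a continuity result for the linear fourth-order problem: given $f^a\rightharpoonup f^0$ (resp.\ $f^a\to f^0$) in $L^{6/5}(\mathbb{R}^3)$, the solutions $\phi^a$ of $-\Delta\phi+a^2\Delta^2\phi=f^a$ converge weakly (resp.\ strongly) in $D^{1,2}(\mathbb{R}^3)$ to the solution $\phi^0$ of $-\Delta\phi=f^0$, with $a\Delta\phi^a\to0$ in $L^2$. What you have written instead is a proof of Theorem \ref{theorem3} (the asymptotic behavior of the \emph{solutions} $(u^a,\phi^a)$ of the nonlinear system as $a\to0$), and in the middle of it you explicitly \emph{invoke} Lemma \ref{asymptotic-lemma} to get $\phi^a\to\phi^0$ in $D^{1,2}$. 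As a proof of the lemma this is circular; as a piece of mathematics it addresses a different (downstream) result. Note also that the paper does not prove this lemma at all: it is quoted from \cite{dAvenia2019}.

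A proof of the actual statement is entirely linear and does not involve $J$, mountain-pass levels, or radial compactness. Testing the equation for $\phi^a$ against $\phi^a$ and using H\"older together with the Sobolev embedding $D^{1,2}\hookrightarrow L^6$ gives
\begin{align*}
\|\nabla\phi^a\|_{L^2}^2+a^2\|\Delta\phi^a\|_{L^2}^2=\int f^a\phi^a\,dx\leq C\|f^a\|_{L^{6/5}}\|\nabla\phi^a\|_{L^2},
\end{align*}
so $\{\nabla\phi^a\}$ and $\{a\Delta\phi^a\}$ are bounded in $L^2$. Extract a weak limit $\tilde\phi$ in $D^{1,2}$; in the weak formulation against $\xi\in C^\infty_c(\mathbb{R}^3)$ the biharmonic term is $a\cdot(a\Delta\phi^a,\Delta\xi)_{L^2}\to0$, so $\tilde\phi$ solves $-\Delta\phi=f^0$ and uniqueness identifies $\tilde\phi=\phi^0$, proving (1). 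For (2), strong convergence of $f^a$ upgrades $\int f^a\phi^a\to\int f^0\phi^0=\|\nabla\phi^0\|_{L^2}^2$, which together with weak lower semicontinuity forces $\|\nabla\phi^a\|_{L^2}\to\|\nabla\phi^0\|_{L^2}$ and $a\|\Delta\phi^a\|_{L^2}\to0$, whence $\phi^a\to\phi^0$ strongly in $D^{1,2}$. If your text were instead submitted as a proof of Theorem \ref{theorem3}, it would be broadly aligned with the paper's Section 5 (and somewhat more careful about the critical case $p=6$, which the paper's argument passes over), but that is not the task here.
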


We end this section by introducing the following lemma.

\begin{lemma}[\cite{Lions1984-2}]\label{vanishing}
Assume that $\{u_n\}_{n=1}^{\infty}$ is bounded sequence in $H^1(\mathbb{R}^3)$ such that
$$\limsup_{n\rightarrow\infty}\int_{B_R(y)}|u_n(x)|^2dx=0$$
for some $R>0$. Then $u_n\rightarrow0$ in $L^r(\mathbb{R}^3)$ for every $r$, with $2<r<6$.
\end{lemma}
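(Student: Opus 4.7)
The plan is to prove the vanishing lemma by combining a covering of $\mathbb{R}^3$ by overlapping balls of radius $R$ with a Gagliardo–Nirenberg type interpolation on each ball, and then exploiting the uniform $H^1$-bound to collapse the sum. I read the hypothesis as the standard Lions formulation, namely that $\lim_{n\to\infty}\sup_{y\in\mathbb{R}^3}\int_{B_R(y)}|u_n|^2\,dx=0$, since otherwise the statement would be vacuous once $u_n\rightharpoonup u\neq 0$.

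First, I would fix a countable family of points $\{y_i\}_{i\in\mathbb{N}}\subset\mathbb{R}^3$ such that the balls $\{B_R(y_i)\}$ cover $\mathbb{R}^3$ and have uniformly bounded overlap, say every point of $\mathbb{R}^3$ lies in at most $N$ of these balls (this is a standard Besicovitch/lattice-covering construction for $\mathbb{R}^3$). In particular, for every $v\in H^1(\mathbb{R}^3)$ one has the additivity estimate $\sum_i\|v\|_{H^1(B_R(y_i))}^2\le N\,\|v\|_{H^1}^2$. Next, for a fixed exponent $r\in(2,6)$, I interpolate locally: on each ball $B_R(y)$ one has by Hölder between $L^2$ and $L^6$
\begin{align*}
\|u_n\|_{L^r(B_R(y))}^{r}\le\|u_n\|_{L^2(B_R(y))}^{r(1-\theta)}\|u_n\|_{L^6(B_R(y))}^{r\theta},\qquad \theta=\frac{3r-6}{2r}\in(0,1),
\end{align*}
and then the Sobolev embedding on the ball gives $\|u_n\|_{L^6(B_R(y))}\le C_R\|u_n\|_{H^1(B_R(y))}$ with a constant $C_R$ independent of $y$ (by translation invariance).

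The heart of the argument is to sum these estimates over $i$. Set $a_i:=\|u_n\|_{L^2(B_R(y_i))}^2$, $b_i:=\|u_n\|_{H^1(B_R(y_i))}^2$, and write the local estimate as $\|u_n\|_{L^r(B_R(y_i))}^{r}\le C\,a_i^{(6-r)/4}\,b_i^{(3r-6)/4}$. For $r\in[\tfrac{10}{3},6)$ the exponent $(3r-6)/4\ge 1$, so I can factor $b_i^{(3r-6)/4}=b_i^{(3r-6)/4-1}\cdot b_i\le\|u_n\|_{H^1}^{2((3r-6)/4-1)}\cdot b_i$, use the sup of the $a_i$'s, and sum $\sum_i b_i\le N\|u_n\|_{H^1}^2$ to get
\begin{align*}
\|u_n\|_{L^r(\mathbb{R}^3)}^{r}\le C\Bigl(\sup_{y\in\mathbb{R}^3}\int_{B_R(y)}|u_n|^2\,dx\Bigr)^{(6-r)/4}\|u_n\|_{H^1}^{(3r-6)/2+2-\cdots},
\end{align*}
which vanishes as $n\to\infty$ because the $H^1$-norms are bounded while the local $L^2$-sup tends to zero. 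This handles every $r\in[\tfrac{10}{3},6)$. The remaining range $r\in(2,\tfrac{10}{3})$ is then immediate from the $L^2$–$L^{10/3}$ interpolation $\|u_n\|_{L^r}\le\|u_n\|_{L^2}^{1-\beta}\|u_n\|_{L^{10/3}}^{\beta}$ for an appropriate $\beta\in(0,1)$, since $\|u_n\|_{L^2}$ is bounded by hypothesis and $\|u_n\|_{L^{10/3}}\to 0$ by the case just proved.

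The main obstacle is the bookkeeping in the summation step: one must ensure that after factoring out the $L^2$-sup (which carries the decay) the remaining sum involves only first-power $H^1$-energies so that the bounded-overlap property gives a finite bound independent of $n$. This is precisely why the threshold $r=10/3$ appears, and why a two-step argument (first $r\ge 10/3$, then low $r$ by interpolation) is cleaner than trying to treat all $r\in(2,6)$ in one sweep.
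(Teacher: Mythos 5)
The paper does not prove this lemma; it is quoted verbatim (modulo an evident typo: the $\sup_{y\in\mathbb{R}^3}$ is missing from the displayed hypothesis) from Lions' concentration-compactness paper, and you were right to read the hypothesis in the standard form $\lim_{n\to\infty}\sup_{y\in\mathbb{R}^3}\int_{B_R(y)}|u_n|^2\,dx=0$, which is what the authors actually use in the proof of Theorem 1.1. Your argument is the classical proof of the vanishing lemma (bounded-overlap covering, local $L^2$--$L^6$ interpolation plus Sobolev, summation calibrated so the $H^1$ energy enters with total power controlled by $\sum_i\|u_n\|_{H^1(B_R(y_i))}^2\le N\|u_n\|_{H^1}^2$), with the distinguished exponent $r=10/3$ being exactly the one for which the local $H^1$ factor appears squared; this is essentially Lemma 1.21 in Willem's book, and your exponent bookkeeping $\|u_n\|_{L^r}^r\le C\bigl(\sup_y\int_{B_R(y)}|u_n|^2\,dx\bigr)^{(6-r)/4}\|u_n\|_{H^1}^{(3r-6)/2}$ for $r\in[10/3,6)$, followed by downward interpolation against the bounded $L^2$ norm for $r\in(2,10/3)$, is correct.
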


\renewcommand{\theequation}
{\thesection.\arabic{equation}}
\setcounter{equation}{0}
\section{Nontrivial solutions} \noindent

In this section, we prove the existence of nontrivial solutions to Eq.(\ref{main2}). Prior to this, we note that, in view of \cite[Appendix A.3]{dAvenia2019}, any critical point $u$ of $J$ satisfies the following Pohozaev equality:
\begin{align}\nonumber
P(u)=&\frac12\|\nabla u\|^2_{L^2}+\frac{3\omega}2\|u\|^2_{L^2}-\frac{5\mu}4\int\phi_u|u|^2dx-\frac3{p}\|u\|^p_{L^p}\\ \label{Pohozaev-equality}
&-\frac{\mu}{4a}\iint e^{-\frac{|x-y|}a}|u(x)|^2|u(y)|^2dxdy=0.
\end{align}
Let
\begin{align}\nonumber
I(u):=&\langle J'(u), u\rangle-P(u)\\ \nonumber
=&\frac12\|\nabla u\|^2_{L^2}-\frac{\omega}2\|u\|^2_{L^2}+\frac{\mu}4\int\phi_u|u|^2dx-\frac{p-3}{p}\|u\|^p_{L^p}\\ \label{Pohozaev-equality1}
&+\frac{\mu}{4a}\iint e^{-\frac{|x-y|}a}|u(x)|^2|u(y)|^2dxdy,
\end{align}
then $I(u)=0$, for all $u\in\mathcal{N}$, where $\mathcal{N}$ is defined in (\ref{Nehari-manifold}).

Under our conditions, since $\phi_u$ depends on the distribution of $u$ over the entire space, for the sequence $\{u_n\}^{\infty}_{n=1}$, the nonlocal term will lead to a correlation between the decay of $\|J'(u_n)\|_{H^{-1}}$ and the growth of the norm of $u_n$. A simple condition that $\|J'(u_n)\|_{H^{-1}}\rightarrow0$ cannot guarantee the boundedness and compactness of the sequence $\{u_n\}^{\infty}_{n=1}$. Therefore, the general Palais-Smale sequences are not applicable, and here, we shall apply a weaker minimax theorem involving Cerami sequences. Now, we first present the definition of a Cerami sequence.

\begin{definition}\label{Cerami-sequence}
Let $(X, \|\cdot\|_X)$ be a real Banach space with its dual space $(X', \|\cdot\|_{X'})$ and $\varphi\in C^1(X, \mathbb{R})$. For $c_0\in\mathbb{R}$, we say that $\{u_n\}^{\infty}_{n=1}\subset X$ is a Cerami sequence, if
$$\varphi(u_n)\rightarrow c_0 ~~\hbox{and}~~ (1+\|u_n\|_{X})\|\varphi'(u_n)\|_{X'}\rightarrow0$$
as $n\rightarrow\infty$.
\end{definition}

\begin{lemma}[Minimax principle, \cite{Li2011}]\label{minimax-principle}
Let $X$ be a Banach space and $M$ a metric space. Let $M_0$ be a closed subspace of $M$ and $\Gamma_0\subset C(M_0, X)$. Define
$$\Gamma_1:=\left\{\gamma_1\in C(M, X):~ \gamma_1\big|_{M_0}\in\Gamma_0\right\}.$$
If $\varphi\in C^1(X, \mathbb{R})$ satisfies
$$\infty>c_0:=\inf\limits_{\gamma_1\in\Gamma_1}\sup\limits_{u\in M}\varphi(\gamma_1(u))>b:=\sup\limits_{\gamma_0\in\Gamma_0}\sup\limits_{u\in M_0}\varphi(\gamma_0(u)),$$
then, for every $\varepsilon\in(0, \frac{c_0-b}2)$, $\delta>0$ and $\gamma_1\in\Gamma_1$ such that
$$\sup\limits_{M}\varphi\circ\gamma_1\leq c_0+\varepsilon,$$
there exists $u\in X$ such that
\begin{itemize}
 \item[(i)] $c_0-2\varepsilon\leq\varphi(u)\leq c_0+2\varepsilon$;
 \item[(ii)] $\hbox{dist}(u, \gamma_1(M))\leq 2\delta$;
 \item[(iii)] $(1+\|u\|_{X})\|\varphi'(u)\|_{X'}\leq\frac{8\varepsilon}{\delta}$.
\end{itemize}
\end{lemma}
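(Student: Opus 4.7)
The plan is a quantitative deformation argument tailored to the Cerami setting, run by contradiction. Suppose the conclusion fails: then on the tube
$$S:=\{u\in X:\operatorname{dist}(u,\gamma_1(M))\le 2\delta,\ |\varphi(u)-c_0|\le 2\varepsilon\}$$
every $u$ satisfies $(1+\|u\|_X)\|\varphi'(u)\|_{X'}>8\varepsilon/\delta$; in particular $\varphi'$ never vanishes on $S$. The aim is to manufacture a deformation $\widetilde\gamma_1\in\Gamma_1$ of the given path with $\sup_M\varphi\circ\widetilde\gamma_1<c_0$, contradicting the minimax definition of $c_0$.

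On the open set $\widetilde X:=\{u:\varphi'(u)\neq 0\}$ I would produce a locally Lipschitz pseudo-gradient field $W:\widetilde X\to X$ satisfying $\|W(u)\|_X\le 2\|\varphi'(u)\|_{X'}$ and $\langle\varphi'(u),W(u)\rangle\ge\|\varphi'(u)\|_{X'}^2$ via the usual partition-of-unity construction. Next, pick a locally Lipschitz cut-off $\chi:X\to[0,1]$ equal to $1$ on the inner tube
$$S':=\{u\in X:\operatorname{dist}(u,\gamma_1(M))\le\delta,\ |\varphi(u)-c_0|\le\varepsilon\}$$
and vanishing off $S$. The hypothesis $\varepsilon<(c_0-b)/2$ forces $S\cap\{\varphi\le b\}=\emptyset$, so $\chi\equiv0$ on $\gamma_1(M_0)\subset\{\varphi\le b\}$. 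I then introduce the Cerami-weighted field
$$V(u):=-\chi(u)\,\frac{(1+\|u\|_X)\,W(u)}{1+\|W(u)\|_X^2},$$
extended by $0$ outside $\widetilde X$. Its linear growth $\|V(u)\|_X\le C(1+\|u\|_X)$ ensures the ODE $\dot\eta(t,u)=V(\eta(t,u))$, $\eta(0,u)=u$, generates a global flow $\eta:[0,\infty)\times X\to X$, and a Gronwall bound on $\|\eta(t,u)-u\|_X$ controls how far orbits drift inside $S$.

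Along orbits passing through $S'$ the chain rule gives
$$\tfrac{d}{dt}\varphi(\eta(t,u))\le -\chi(\eta)\,\frac{(1+\|\eta\|_X)\|\varphi'(\eta)\|_{X'}^2}{1+\|W(\eta)\|_X^2},$$
which, together with the standing lower bound $(1+\|\eta\|_X)\|\varphi'(\eta)\|_{X'}>8\varepsilon/\delta$ on $S$, yields a uniform descent rate while orbits stay in $S$. Choosing a terminal time $T$ of order $\delta$ and setting $\widetilde\gamma_1(u):=\eta(T,\gamma_1(u))$, one obtains $\sup_M\varphi\circ\widetilde\gamma_1\le c_0-\varepsilon$; since $\chi\equiv 0$ on $\gamma_1(M_0)$, the deformation is trivial there and $\widetilde\gamma_1|_{M_0}=\gamma_1|_{M_0}\in\Gamma_0$. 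Thus $\widetilde\gamma_1\in\Gamma_1$ with smaller maximum, contradicting the definition of $c_0$.

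The main obstacle is calibrating $\chi$ and $T$ so that three constraints hold at once: orbits of points in $\gamma_1(M)$ remain inside the $2\delta$-tube for $t\in[0,T]$ (so that the deformation stays close enough to yield conclusion (ii) in the direct reformulation where one exhibits a near-critical point rather than deriving a contradiction), the flow is the identity on $\gamma_1(M_0)$ (preserving $\Gamma_0$), and the total descent strictly exceeds $\varepsilon$. The Cerami weight $(1+\|u\|_X)$ is precisely what lets the construction proceed without any lower bound on $\|\varphi'\|_{X'}$ alone — this is the essential improvement over the classical Palais-Smale deformation and the point at which one must be most careful.
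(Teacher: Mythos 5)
The paper does not actually prove this lemma: it is imported verbatim from \cite{Li2011}, so there is no in-paper argument to compare with and your proposal has to stand on its own. Your overall strategy --- argue by contradiction, build a locally Lipschitz pseudo-gradient field $W$, cut it off on the tube $S$, weight it by the Cerami factor $(1+\|u\|_X)$, and deform $\gamma_1$ to a competitor in $\Gamma_1$ with strictly smaller maximum --- is a legitimate and standard route to this statement, and several of your checkpoints are right: $S\cap\{\varphi\le b\}=\emptyset$ because $c_0-2\varepsilon>b$, so the flow fixes $\gamma_1(M_0)$ and the deformed path remains in $\Gamma_1$; and the linear growth $\|V(u)\|_X\le\frac12(1+\|u\|_X)$ of your field does give a globally defined flow.

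The gap is in the normalization $V(u)=-\chi(u)\,(1+\|u\|_X)W(u)/(1+\|W(u)\|_X^2)$: the sentence ``together with the standing lower bound $(1+\|\eta\|_X)\|\varphi'(\eta)\|_{X'}>8\varepsilon/\delta$ \ldots yields a uniform descent rate'' is false. Write $A=1+\|u\|_X$, $B=\|\varphi'(u)\|_{X'}$, $\kappa=8\varepsilon/\delta$. The contradiction hypothesis only gives $AB>\kappa$, which is compatible with $B\rightarrow 0$ and $A\rightarrow\infty$ (and nothing in the lemma forbids such points on $S$, since $M$ is an arbitrary metric space and $\gamma_1(M)$ need not be bounded). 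At such a point the descent rate of $\varphi$ along the flow is at most $A\langle\varphi'(u),W(u)\rangle\le A\|\varphi'(u)\|_{X'}\|W(u)\|_X\le 2AB^2=2(AB)B$, which for $AB$ just above $\kappa$ is of order $\kappa B\rightarrow 0$, while the speed $\|V(u)\|_X$ stays of order $AB\sim\kappa$. So an orbit launched from such a point lowers $\varphi$ by only $O(B)$ before it can leave the $2\delta$-tube and switch off $\chi$; the infimum over $S'$ of your descent rate is zero, no choice of $T$ ``of order $\delta$'' repairs this, and the required conclusion $\sup_M\varphi\circ\widetilde\gamma_1<c_0$ does not follow. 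The fix is to use $V(u)=-\chi(u)\,(1+\|u\|_X)W(u)/\|W(u)\|_X^2$, which gives the genuinely uniform rate $\frac{d}{dt}\varphi(\eta)\le-\chi(\eta)(1+\|\eta\|_X)/4\le-\chi(\eta)/4$, and then to control the drift via $\|V(\eta)\|_X\le(1+\|\eta\|_X)/\|\varphi'(\eta)\|_{X'}\le(1+\|\eta\|_X)^2\delta/(8\varepsilon)$ together with a Gronwall estimate on $1+\|\eta\|_X$ along the orbit; this bookkeeping, which is what produces the precise constants $2\varepsilon$, $2\delta$, $8\varepsilon/\delta$ in the statement, is the actual content of the proof in \cite{Li2011} and is exactly the part your write-up defers to ``calibration'' without carrying out.
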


Next, we will apply Lemma \ref{minimax-principle} to obtain a Cerami sequence $\{u_n\}^{\infty}_{n=1}$ of $J$ with $I(u_n)\rightarrow0$, as $n\rightarrow\infty$. This idea goes back to \cite{Jeanjean1997}.

\begin{lemma}\label{Cerami-sequence1}
Let $p\in(2, 6]$. Then there exists a sequence $\{u_n\}^{\infty}_{n=1}\subset H^1(\mathbb{R}^3)$ satisfying
\begin{align}\label{Cerami1}
J(u_n)\rightarrow c>0, ~~ (1+\|u_n\|_{H^1})\|J'(u_n)\|_{H^{-1}}\rightarrow0 ~\hbox{and}~ I(u_n)\rightarrow0, ~\hbox{as}~ n\rightarrow\infty,
\end{align}
where $H^{-1}$ denotes the dual space of $H^1$, $c$ and $\Gamma$ are defined in (\ref{mountain-pass}) and (\ref{mountain-pass1}), respectively.
\end{lemma}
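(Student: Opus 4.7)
The plan is to combine the Jeanjean scaling trick from \cite{Jeanjean1997} with the quantitative minimax principle (Lemma \ref{minimax-principle}): I will introduce, on the augmented space $\mathbb{R}\times H^1(\mathbb{R}^3)$, an auxiliary functional whose partial derivative in $\theta$ reproduces the Pohozaev quantity $P(u)$; running a mountain-pass argument in this larger space and then projecting will automatically deliver a Cerami--Pohozaev sequence for $J$ itself, which is what the lemma asks for.

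Concretely, I set $H(\theta, v)(x) := v(e^{-\theta}x)$ and
$$\tilde J(\theta, v) := J(H(\theta, v)) = \frac{e^\theta}{2}\|\nabla v\|_{L^2}^2 + \frac{\omega e^{3\theta}}{2}\|v\|_{L^2}^2 - \frac{\mu e^{6\theta}}{4}\iint\mathcal{K}(e^\theta(x-y))|v(x)|^2|v(y)|^2\,dxdy - \frac{e^{3\theta}}{p}\|v\|_{L^p}^p.$$
Using the identity $r\mathcal{K}'(r) = \tfrac{1}{a}e^{-r/a} - \mathcal{K}(r)$, a direct computation gives the key formula $\partial_\theta \tilde J(\theta, v) = P(H(\theta, v))$, and $\tilde J \in C^1(\mathbb{R}\times H^1, \mathbb{R})$ by routine applications of Lemma \ref{HLS}. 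I define the augmented paths and level
$$\tilde\Gamma := \{\tilde\gamma \in C([0,1], \mathbb{R}\times H^1):\; \tilde\gamma(0) = (0,0),\; \tilde J(\tilde\gamma(1)) < 0\},\quad \tilde c := \inf_{\tilde\gamma \in \tilde\Gamma}\max_{t\in[0,1]}\tilde J(\tilde\gamma(t)).$$
The embedding $\Gamma \ni \gamma \mapsto (0,\gamma) \in \tilde\Gamma$ gives $\tilde c \leq c$, and the projection $\tilde\gamma \mapsto H\circ\tilde\gamma \in \Gamma$ gives $c \leq \tilde c$, so $\tilde c = c$. Mountain-pass geometry for $J$ (in particular $c > 0$) is standard: Lemma \ref{HLS} and the Sobolev embedding give $J(u) \geq \alpha > 0$ for $\|u\|_{H^1} = \rho$ sufficiently small, while the scaling $u_t(x) = u(x/t)$ drives $J(u_t) \to -\infty$ as $t\to\infty$ because the nonlocal term contributes $-\mu\,O(t^5)$ (from the $1/|x|$ tail of $\mathcal{K}$) that dominates all other terms uniformly in $p\in(2,6]$.

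Next I apply Lemma \ref{minimax-principle} to $\tilde J$ on $X = \mathbb{R}\times H^1$ with $M = [0,1]$, $M_0 = \{0,1\}$, $\Gamma_1 = \tilde\Gamma$, for which $b = 0 < c = \tilde c$. I pick $\varepsilon_n \downarrow 0$, $\delta_n := \sqrt{\varepsilon_n}$, and nearly optimal paths of the special form $\tilde\gamma_n := (0,\gamma_n)$ with $\gamma_n \in \Gamma$ and $\max_t J(\gamma_n(t)) \leq c + \varepsilon_n$. The conclusion of the lemma supplies $(\theta_n, v_n) \in X$ with $\tilde J(\theta_n, v_n) \to c$, $(1+\|(\theta_n,v_n)\|_X)\|\tilde J'(\theta_n,v_n)\|_{X^*} \to 0$, and $\mathrm{dist}((\theta_n, v_n), \{0\}\times\gamma_n([0,1])) \leq 2\sqrt{\varepsilon_n}$; the last bound forces $|\theta_n| \leq 2\sqrt{\varepsilon_n} \to 0$, which is the pivotal feature of the construction.

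Finally I set $u_n := H(\theta_n, v_n)$. The energy condition $J(u_n) = \tilde J(\theta_n, v_n) \to c$ is immediate; the identity $\partial_\theta \tilde J(\theta_n, v_n) = P(u_n)$ together with $|\partial_\theta \tilde J| \leq \|\tilde J'\|_{X^*} \to 0$ yields $P(u_n) \to 0$. For the gradient, given $\psi \in H^1(\mathbb{R}^3)$, setting $\varphi := \psi(e^{\theta_n}\cdot)$ makes $H(\theta_n, \varphi) = \psi$ and $\langle J'(u_n),\psi\rangle = \partial_v \tilde J(\theta_n, v_n)[\varphi]$; combining the elementary bounds $\|\varphi\|_{H^1} \leq \max(e^{-3\theta_n/2}, e^{-\theta_n/2})\|\psi\|_{H^1}$ and $\|u_n\|_{H^1} \leq \max(e^{3\theta_n/2}, e^{\theta_n/2})\|v_n\|_{H^1}$ with $\theta_n \to 0$ gives
$$(1+\|u_n\|_{H^1})\|J'(u_n)\|_{H^{-1}} \leq (1+o(1))(1+\|v_n\|_{H^1})\|\partial_v \tilde J(\theta_n, v_n)\|_{H^{-1}} \to 0.$$
Taking in particular $\varphi = v_n$ (so $\psi = u_n$) yields $\langle J'(u_n), u_n\rangle \to 0$, and therefore $I(u_n) = \langle J'(u_n), u_n\rangle - P(u_n) \to 0$. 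The main obstacle throughout is precisely this clean transfer of the Cerami weight $(1+\|u_n\|_{H^1})$ from the augmented space back to $H^1(\mathbb{R}^3)$: the scaling factors $e^{\pm\theta_n}$ could \emph{a priori} destroy the weight if $\theta_n$ were free to drift, and this is exactly why the reference paths above are forced to live on the slice $\{\theta = 0\}$ and $\delta_n$ is sent to zero, pinning $\theta_n \to 0$.
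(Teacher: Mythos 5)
Your argument is correct and is essentially the paper's own proof: the Jeanjean augmented-functional trick combined with the quantitative minimax principle of Lemma \ref{minimax-principle}, with the reference paths confined to the slice $\{\theta=0\}$ so that the distance estimate pins $\theta_n\to 0$ and the scaling factors $e^{\pm\theta_n}$ become harmless when transferring the Cerami weight back to $H^1(\mathbb{R}^3)$. The only (minor) difference is the choice of the one-parameter group: you use the pure dilation $v(e^{-\theta}x)$, whose $\theta$-derivative is the Pohozaev functional $P$, and then recover $I(u_n)\to 0$ by combining $P(u_n)\to 0$ with $\langle J'(u_n),u_n\rangle\to 0$ (the latter from the Cerami bound), whereas the paper uses $h(s,v)=e^{s}v(e^{s}x)$, whose $s$-derivative is $I=\langle J'(\cdot),\cdot\rangle-P$ itself; both choices deliver the stated conclusion.
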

\begin{proof}
First, we prove that $\Gamma\neq\emptyset$ and $0<c<\infty$. For any $u\in H^1(\mathbb{R}^3)$, by Lemma \ref{HLS} and Sobolev embedding theorem, we obtain that
\begin{align}\label{Cerami2}
J(u)\geq\frac12\min\{1, \omega\}\|u\|^2_{H^1}-C_1\|u\|^4_{H^1}-C_2\|u\|^p_{H^1}.
\end{align}
It follows from (\ref{Cerami2}) that there exist constants $\rho_0>0$ and $\alpha_0>0$ such that
\begin{align}\label{Cerami3}
J(u)\geq\frac14\min\{1, \omega\}\|u\|^2_{H^1}\geq0, ~~\forall~ \|u\|_{H^1}\leq\rho_0 \quad\hbox{and}\quad J(u)\geq\alpha_0, ~~\forall~ \|u\|_{H^1}=\rho_0.
\end{align}
For any fixed $u\in H^1(\mathbb{R}^3)\setminus\{0\}$, we have
\begin{align*}
J(tu)=\frac{t^2}2\|\nabla u\|^2_{L^2}+\frac{t^2}2 \omega\|u\|^2_{L^2}-\frac{t^4}4\mu\int\phi_u|u|^2dx-\frac{t^p}p\|u\|^p_{L^p}, \quad \forall~ t>0,
\end{align*}
since $p>2$, we obtain that
\begin{align*}
J(tu)\rightarrow-\infty, ~~\hbox{as}~ t\rightarrow\infty \quad\hbox{and}\quad \sup\limits_{t\geq0}J(tu)<\infty.
\end{align*}
Therefore, we can choose $T>0$ to be large enough such that $J(Tu)<0$. Let $\gamma_T(t)=tTu$ for $t\in[0, 1]$, then $\gamma_T\in C([0, 1], H^1(\mathbb{R}^3))$ such that $\gamma_T(0)=0$, $J(\gamma_T(1))<0$ and $\max\limits_{t\in[0, 1]}J(\gamma_T(t))<\infty$. This shows that $\Gamma\neq\emptyset$ and $c<\infty$. For every $\gamma\in\Gamma$, since $\gamma(0)=0$ and $J(\gamma(1))<0$, then it follows from (\ref{Cerami3}) that $\|\gamma(1)\|_{H^1}>\rho_0$. From the continuity of $\gamma(t)$ and the intermediate value theorem, there exists $t_0\in(0, 1)$ such that $\|\gamma(t_0)\|_{H^1}=\rho_0$. Thus, we have
$$\sup\limits_{t\in[0, 1]}J(\gamma(t))\geq J(\gamma{t_0})\geq\alpha_0>0, ~~ \forall~ \gamma\in\Gamma,$$
which yields
\begin{align}\label{Cerami4}
\infty>c=\inf\limits_{\gamma\in\Gamma}\max\limits_{t\in[0, 1]}J(\gamma(t))\geq\alpha_0>0.
\end{align}

Inspired by \cite{Jeanjean1997}, we define the continuous map $h: \mathbb{R}\times H^1(\mathbb{R}^3)\rightarrow H^1(\mathbb{R}^3)$ for $s\in\mathbb{R}$, $v\in H^1(\mathbb{R}^3)$ and $x\in\mathbb{R}^3$ by $h(s, v)(x)=e^sv(e^sx)$, where $\mathbb{R}\times H^1(\mathbb{R}^3)$ is a Banach space equipped with the product norm $\|(s, v)\|_{\mathbb{R}\times H^1(\mathbb{R}^3)}:=\left(|s|^2+\|v\|^2_{H^1}\right)^{\frac12}$. For every $s\in\mathbb{R}$, $v\in H^1(\mathbb{R}^3)$, we consider the following auxiliary functional:
\begin{align}\nonumber
\Psi(s, v):=&J(h(s, v))\\ \nonumber
=&\frac12\|\nabla h(s, v)\|^2_{L^2}+\frac{\omega}2\|h(s, v)\|^2_{L^2}-\frac{\mu}4\int\phi_{h(s, v)}|h(s, v)|^2dx-\frac1p\|h(s, v)\|^p_{L^p}\\ \label{Cerami5}
=&\frac{e^s}2\|\nabla v\|^2_{L^2}+\frac{e^{-s}}2\omega\|v\|^2_{L^2}-\frac{e^{-s}}4\mu\iint\frac{1-e^{-\frac{|x-y|}{ae^s}}}{|x-y|}|v(x)|^2|v(y)|^2dxdy-\frac{e^{(p-3)s}}p\|v\|^p_{L^p}.
\end{align}
It is easy to see that $\Psi\in C^1(\mathbb{R}\times H^1(\mathbb{R}^3), \mathbb{R})$. From (\ref{Pohozaev-equality1}), (\ref{Cerami5}) and the definition of $h$, for $s\in\mathbb{R}$ and $v\in H^1(\mathbb{R}^3)$, we deduce that
\begin{align}\nonumber
\frac{\partial}{\partial s}\Psi(s, v)=&\frac{e^s}2\|\nabla v\|^2_{L^2}-\frac{e^{-s}}2\omega\|v\|^2_{L^2}+\frac{e^{-s}}4\mu\iint\frac{1-e^{-\frac{|x-y|}{ae^s}}}{|x-y|}|v(x)|^2|v(y)|^2dxdy\\ \nonumber
&+\frac{e^{-2s}}{4a}\mu\iint e^{-\frac{|x-y|}{ae^s}}|v(x)|^2|v(y)|^2dxdy-\frac{(p-3)e^{(p-3)s}}p\|v\|^p_{L^p}\\ \label{Cerami6}
=&I(h(s, v)).
\end{align}
Moreover, since the map $v\mapsto h(s, v)$ is linear for fixed $s\in\mathbb{R}$, we have
\begin{align}\label{Cerami7}
\langle\frac{\partial}{\partial v}\Psi(s, v), w\rangle=\langle J'(h(s, v)), h(s, w)\rangle,~~ \hbox{for}~ s\in\mathbb{R} ~ \hbox{and}~ v, w\in H^1(\mathbb{R}^3).
\end{align}

Now, we define a minimax value $\tilde{c}$ for $\Psi$ by
$$\tilde{c}:=\inf\limits_{\tilde{\gamma}\in\widetilde{\Gamma}}\max\limits_{t\in[0, 1]}\Psi(\tilde{\gamma}(t)),$$
where
$$\widetilde{\Gamma}:=\left\{\tilde{\gamma}\in C([0, 1], \mathbb{R}\times H^1(\mathbb{R}^3)):~ \tilde{\gamma}(0)=(0, 0), \Psi(\tilde{\gamma}(1))<0\right\}.$$
Noting that $\Gamma=\left\{h\circ\tilde{\gamma}:~ \tilde{\gamma}\in\widetilde{\Gamma}\right\}$, the minimax value of $J$ and $\Psi$ coincide, i.e. $c=\tilde{c}$. By the definition of $c$, for every $n\in\mathbb{N}$, there exists $\gamma_n\in\Gamma$ such that
$$\max\limits_{t\in[0, 1]}\Psi(0, \gamma_n(t))=\max\limits_{t\in[0, 1]}J(\gamma_n(t))\leq c+\frac1{n^2}.$$
Next, we apply Lemma \ref{minimax-principle} to $\Psi$, $M=[0, 1]$, $M_0=\{0, 1\}$ and $\mathbb{R}\times H^1(\mathbb{R}^3)$, $\widetilde{\Gamma}$ in place of $X$, $\Gamma$. Set $\varepsilon=\varepsilon_n:=\frac1{n^2}$, $\delta=\delta_n:=\frac1n$ and $\tilde{\gamma}_n(t):=(0, \gamma_n(t))$. Since (\ref{Cerami4}) implies that $\varepsilon_n\in(0, \frac{c}2)$ for large $n\in\mathbb{N}$. From Lemma \ref{minimax-principle}, there exists a sequence $\{(s_n, v_n)\}^{\infty}_{n=1}\subset\mathbb{R}\times H^1(\mathbb{R}^3)$ such that, as $n\rightarrow\infty$,
\begin{align}\label{Cerami8}
&\Psi(s_n, v_n)\rightarrow c,\\ \label{Cerami9}
&(1+\|(s_n, v_n)\|_{\mathbb{R}\times H^1(\mathbb{R}^3)})\|\Psi'(s_n, v_n)\|_{(\mathbb{R}\times H^1(\mathbb{R}^3))^{-1}}\rightarrow0,\\ \label{Cerami10}
&\hbox{dist}\left((s_n, v_n), (0, \gamma([0, 1]))\right)\rightarrow0.
\end{align}
Moreover, (\ref{Cerami10}) implies that $s_n\rightarrow0$, as $n\rightarrow\infty$. For every $(\tau, w)\in\mathbb{R}\times H^1(\mathbb{R}^3)$,
\begin{align}\label{Cerami11}
\langle\Psi'(s_n, v_n), (\tau, w)\rangle=\langle J'(h(s_n, v_n)), h(s_n, w)\rangle+I(h(s_n, v_n))\tau.
\end{align}
By (\ref{Cerami6}) and (\ref{Cerami7}), we can take $\tau=1$ and $w=0$ in (\ref{Cerami11}) to obtain
\begin{align}\label{Cerami12}
I(h(s_n, v_n))\rightarrow0 \quad\hbox{as}\quad n\rightarrow\infty.
\end{align}
Let $u_n:=h(s_n, v_n)$. Then it follows from (\ref{Cerami8}) and (\ref{Cerami12}) that
$$J(u_n)\rightarrow c \quad\hbox{and}\quad I(u_n)\rightarrow0, ~~\hbox{as}~~ n\rightarrow\infty.$$
Finally, for given $v\in H^1(\mathbb{R}^3)$, we consider $w_n(x)=e^{-s_n}v(e^{-s_n}x)\in H^1(\mathbb{R}^3)$ and deduce from (\ref{Cerami9}) and (\ref{Cerami11}) with $\tau=0$ that
\begin{align*}
(1+\|u_n\|_{H^1})\big|\langle J'(u_n), v\rangle\big|&=(1+\|u_n\|_{H^1})\big|\langle J'(h(s_n, w_n)), h(s_n, w_n)\rangle\big|\\
&=o_n(1)\|w_n\|_{H^1}\\
&=o_n(1)\left(e^{-\frac{s_n}2}\|\nabla v\|_{L^2}+e^{\frac{e^s}2}\|v\|_{L^2}\right),
\end{align*}
which together with $s_n\rightarrow0$ as $n\rightarrow\infty$, we have
$$(1+\|u_n\|_{H^1})\|J'(u_n)\|_{H^{-1}}\rightarrow0 \quad\hbox{as}\quad n\rightarrow\infty.$$
The proof is thus finished.
\end{proof}

\begin{lemma}\label{proof-theorem1-lemma1}
Let $p=6$ and $\mu>0$ be sufficiently large, then $0<c<\frac13 K^{\frac32}$, where $K$ is the best Sobolev constant for the embedding $D^{1, 2}(\mathbb{R}^3)\hookrightarrow L^6(\mathbb{R}^3)$.
\end{lemma}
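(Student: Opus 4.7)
The plan is to exhibit an explicit path in $\Gamma$ whose $J$-maximum is strictly less than $\tfrac{1}{3}K^{3/2}$, exploiting the fact that, because $\mu>0$ and $\phi_u\ge 0$ by Lemma \ref{property2}, the nonlocal contribution $-\tfrac{\mu}{4}\int\phi_u|u|^2\,dx$ enters $J$ with the \emph{same} sign as the critical term $-\tfrac{1}{6}\|u\|_{L^6}^{6}$. Since its scaling exponent is $4$ rather than $6$, taking $\mu$ large forces the mountain-pass barrier to collapse. Unlike standard Brezis--Nirenberg arguments I do not need Aubin--Talenti concentration profiles $U_\varepsilon$: I shall dilate the parameter $\mu$ instead of the test function.

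Concretely, I would fix once and for all an arbitrary $u_0\in H^1(\mathbb{R}^3)\setminus\{0\}$ and set
$$A:=\|\nabla u_0\|_{L^2}^{2}+\omega\|u_0\|_{L^2}^{2},\qquad B_0:=\int\phi_{u_0}|u_0|^{2}dx,\qquad C:=\|u_0\|_{L^6}^{6}.$$
All three quantities are strictly positive: for $B_0$ this is because $\mathcal{K}(x)>0$ for $x\neq 0$, so $\phi_{u_0}=\mathcal{K}\ast|u_0|^{2}>0$ pointwise whenever $u_0\not\equiv 0$. Along the ray $t\mapsto tu_0$ one has
$$J(tu_0)=\frac{t^{2}A}{2}-\frac{t^{4}\mu B_0}{4}-\frac{t^{6}C}{6}\ \le\ \frac{t^{2}A}{2}-\frac{t^{4}\mu B_0}{4},$$
and the right-hand side is a quadratic in $t^{2}$ whose maximum is $\tfrac{A^{2}}{4\mu B_0}$, attained at $t^{2}=A/(\mu B_0)$. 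Consequently
$$\max_{t\ge 0} J(tu_0)\ \le\ \frac{A^{2}}{4\mu B_0}\ \longrightarrow\ 0\quad\text{as }\mu\to\infty,$$
so for every $\mu>\mu_0:=\tfrac{3A^{2}}{4B_0 K^{3/2}}$ one obtains $\max_{t\ge 0}J(tu_0)<\tfrac{1}{3}K^{3/2}$.

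To convert this into an estimate on $c$, I would pick $T>0$ large enough that $J(Tu_0)<0$ (available since the $-\tfrac{t^{6}C}{6}$ term drives $J(tu_0)\to-\infty$ as $t\to\infty$) and set $\gamma(s):=sTu_0$ for $s\in[0,1]$. Then $\gamma\in\Gamma$ and
$$c\ \le\ \max_{s\in[0,1]}J(\gamma(s))\ =\ \max_{t\in[0,T]}J(tu_0)\ \le\ \max_{t\ge 0}J(tu_0)\ <\ \frac{1}{3}K^{3/2},$$
while the lower bound $c>0$ is already supplied by the mountain-pass geometry in (\ref{Cerami4}), whose proof is valid for every $p\in(2,6]$ (including $p=6$). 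I do not anticipate a genuine obstacle here: once one observes that the $\mu$-term has the \emph{helpful} sign in $J$, the whole argument reduces to elementary one-variable calculus on a polynomial, and the role of ``$\mu$ sufficiently large'' is simply to make the quartic barrier dominate early enough that the critical sextic term never reaches the Sobolev threshold.
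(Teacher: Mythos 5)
Your proof is correct, and the underlying mechanism is the same as the paper's: fix a ray $t\mapsto tu$, observe that the negatively-signed nonlocal term scales like $t^4$, and let $\mu\to\infty$ so that the maximum of $J$ along the ray collapses below $\tfrac13 K^{3/2}$. The difference is that the paper runs this argument on the truncated Aubin--Talenti instantons $u_\varepsilon=\psi U_\varepsilon$ and quotes the Brezis--Nirenberg estimates (\ref{proof-theorem1-lemma1-2})--(\ref{proof-theorem1-lemma1-3}), even though (as you correctly sense) none of that concentration machinery is actually used: for fixed $\varepsilon$ the quantities $\|\nabla u_\varepsilon\|_{L^2}^2$, $\int\phi_{u_\varepsilon}|u_\varepsilon|^2dx$, $\|u_\varepsilon\|_{L^6}^6$ are just fixed positive constants, and the paper's conclusion $t_\mu\to0$ follows from that alone. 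Your version makes this explicit by taking an arbitrary $u_0$, and it buys two small improvements: a clean quantitative threshold $\mu_0=\tfrac{3A^2}{4B_0K^{3/2}}$, and the observation (worth recording) that strict positivity of $B_0$ follows from $\mathcal{K}>0$ on $\mathbb{R}^3\setminus\{0\}$, which sharpens the paper's Lemma \ref{property2}(2). The elementary bound $\max_{t\ge0}J(tu_0)\le\tfrac{A^2}{4\mu B_0}$ obtained by discarding the sextic term is valid since that term is nonpositive, and the construction of the admissible path $\gamma(s)=sTu_0$ together with the lower bound $c\ge\alpha_0>0$ from (\ref{Cerami4}) is exactly as in Lemma \ref{Cerami-sequence1}, whose mountain-pass geometry indeed covers $p=6$. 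The only caveat is that the instanton choice would become essential if one wanted the strict inequality $c<\tfrac13K^{3/2}$ for all $\mu>0$ rather than for $\mu$ large, in true Brezis--Nirenberg fashion; for the lemma as stated, your simpler route suffices.
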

\begin{proof}
From Theorem 1.42 in \cite{Willem1996}, we know that $U(x)=\frac{3^{\frac14}}{(1+|x|^2)^{\frac12}}$ is a minimizer for $K$. Let $\psi\in C^{\infty}_c(\mathbb{R}^3, [0, 1])$ be such that
\begin{equation} \nonumber
\left\{
\begin{aligned}
&\psi(x)=
\left\{
\begin{aligned}
&1, \quad\hbox{if}~~ |x|\leq R,\\
&0, \quad\hbox{if}~~ |x|\geq 2R,
\end{aligned}
\right.\\
&|\nabla\psi(x)|\leq C,  \quad\forall~ x\in\mathbb{R}^3.
\end{aligned}
\right.
\end{equation}
For $\varepsilon>0$, we define
\begin{align}\label{proof-theorem1-lemma1-1}
U_{\varepsilon}:=\varepsilon^{-\frac12}U(\frac{x}{\varepsilon}), \quad u_{\varepsilon}:=\psi(x)U_{\varepsilon}(x).
\end{align}
By \cite{Brezis1983} (see also \cite{Willem1996}), we have the following estimates as $\varepsilon\rightarrow0^+$:
\begin{align}\label{proof-theorem1-lemma1-2}
\|\nabla u_{\varepsilon}\|^2_{L^2}=K^{\frac32}+O(\varepsilon), \quad\|u_{\varepsilon}\|^6_{L^6}=K^{\frac32}+O(\varepsilon^3),
\end{align}
and
\begin{equation}\label{proof-theorem1-lemma1-3}
\|u_{\varepsilon}\|^q_{L^q}=\left\{
\begin{aligned}
&O(\varepsilon^{\frac{q}2}), \quad&\hbox{if}~~ q\in[2, 3),\\
&O(\varepsilon^{\frac{q}2}|\hbox{ln}\varepsilon|), \quad&\hbox{if}~~ q=3,\\
&O(\varepsilon^{\frac{6-q}2}), \quad&\hbox{if}~~ q\in(3, 6).
\end{aligned}
\right.
\end{equation}

For any fixed $\varepsilon$ in (\ref{proof-theorem1-lemma1-1}), from $J(tu_{\varepsilon})\rightarrow-\infty$ as $t\rightarrow\infty$, we have that $\max\limits_{t\geq0}J(tu_{\varepsilon})$ is attained at some $t_{\mu}>0$ and $t_{\mu}$ satisfies
\begin{align*}
t_{\mu}\|\nabla u_{\varepsilon}\|^2_{L^2}+t_{\mu}\omega\|u_{\varepsilon}\|^2_{L^2}=\mu t_{\mu}^3\int\phi_{u_{\varepsilon}}|u_{\varepsilon}|^2dx+t_{\mu}^5\|u_{\varepsilon}\|^6_{L^6},
\end{align*}
that is
\begin{align*}
\|\nabla u_{\varepsilon}\|^2_{L^2}+\omega\|u_{\varepsilon}\|^2_{L^2}=\mu t_{\mu}^2\int\phi_{u_{\varepsilon}}|u_{\varepsilon}|^2dx+t_{\mu}^4\|u_{\varepsilon}\|^6_{L^6},
\end{align*}
thanks to $\frac{\partial}{\partial t}J(tu_{\varepsilon})\big|_{t=t_{\mu}}=0$. Therefore, by (\ref{proof-theorem1-lemma1-2}) and (\ref{proof-theorem1-lemma1-3}), we obtain $t_{\mu}\rightarrow0$ as $\mu\rightarrow\infty$. Then,
\begin{align}\label{proof-theorem1-lemma1-4}
\max\limits_{t\geq0}J(tu_{\varepsilon})=&\frac{t_{\mu}^2}2\|\nabla u_{\varepsilon}\|^2_{L^2}+\frac{t_{\mu}^2}2\|u_{\varepsilon}\|^2_{L^2}-\frac{\mu}4t_{\mu}^4\int\phi_{u_{\varepsilon}}|u_{\varepsilon}|^2dx
-\frac{t_{\mu}^6}6\|u_{\varepsilon}\|^6_{L^6}\rightarrow0
\end{align}
as $\mu\rightarrow\infty$. It follows from (\ref{proof-theorem1-lemma1-4}) and the definition of $c$ that
$$0<c\leq\sup\limits_{t\geq0}J(tu_{\varepsilon})<\frac13 K^{\frac32}$$
for $\mu>0$ large enough. This completes the proof.
\end{proof}

\begin{proof}[Proof of Theorem \ref{theorem1}]  The proof is divided into the following steps.\\
\textbf{Step 1.} The existence of nontrivial solutions.

\textbf{Case (1).} $p\in(2, 6)$ and $\mu>0$.

Let $\{u_n\}^{\infty}_{n=1}$ be a Cerami sequence obtained in Lemma \ref{Cerami-sequence1}. We first show that $\{u_n\}^{\infty}_{n=1}$ is bounded in $H^1(\mathbb{R}^3)$. If $p\in(2, 4)$, we get that
\begin{align}\nonumber
pc+o_n(1)&=pJ(u_n)-\langle J'(u_n), u_n\rangle\\ \nonumber
&=\frac{p-2}2\|\nabla u_n\|^2_{L^2}+\frac{p-2}2\omega\|u_n\|^2_{L^2}+\frac{p-4}4\mu\int\phi_{u_n}|u_n|^2dx\\ \label{proof-theorem1-5}
&\geq\frac{p-2}2\min\{1, \omega\}\|u_n\|^2_{H^1}.
\end{align}
If $p\in[4, 6)$, we obtain that
\begin{align}\nonumber
4c+o_n(1)&=4J(u_n)-\langle J'(u_n), u_n\rangle\\ \nonumber
&=\|\nabla u_n\|^2_{L^2}+\omega\|u_n\|^2_{L^2}+\frac{p-4}p\|u_n\|^p_{L^p}\\ \label{proof-theorem1-6}
&\geq\min\{1, \omega\}\|u_n\|^2_{H^1}.
\end{align}
Thus, the sequence $\{u_n\}^{\infty}_{n=1}$ is bounded in $H^1(\mathbb{R}^3)$ for $p\in(2, 6)$.

For any sufficiently large $n$, we have
\begin{align*}
\|\nabla u_n\|^2_{L^2}+\omega\|u_n\|^2_{L^2}&=2J(u_n)+\frac{\mu}2\int\phi_{u_n}|u_n|^2dx+\frac2p\|u_n\|^p_{L^p}\\
&\geq c>0.
\end{align*}
That is,
\begin{align}\label{proof-theorem1-1}
\liminf\limits_{n\rightarrow\infty}\|u_n\|_{H^1}>0.
\end{align}
We then claim that
\begin{align}\label{proof-theorem1-2}
\liminf\limits_{n\rightarrow\infty}\sup\limits_{y\in\mathbb{R}^3}\int_{B_1(y)}|u_n(x)|^2dx>0.
\end{align}
If it is false, by Lemma \ref{vanishing}, after passing to a subsequence, it follows that $u_n\rightarrow0$ in $L^r(\mathbb{R}^3)$ for any $r\in(2, 6)$. Then, Lemma \ref{HLS} gives
\begin{align*}
\int\phi_{u_n}|u_n|^2dx\leq C\|u_n\|^4_{L^{\frac{12}5}}=o_n(1).
\end{align*}
Therefore, we have
$$\min\{1, \omega\}\|u_n\|^2_{H^1}\leq\langle J'(u_n), u_n\rangle+\mu\int\phi_{u_n}|u_n|^2dx+\|u_n\|^p_{L^p}\rightarrow0,$$
as $n\rightarrow\infty$. This contradicts (\ref{proof-theorem1-1}). Hence, (\ref{proof-theorem1-2}) holds. Going if necessary to a subsequence, there exist a sequence $\{y_n\}^{\infty}_{n=1}\subset\mathbb{R}^3$ and a constant $\delta_1>0$ such that $\int_{B_1(y_n)}|u_n|^2dx>\delta_1$. Let $\tilde{u}_n(x):=u_n(x+y_n)$. By Lemma \ref{property2}, we get $\phi_{u_n}(x+y_n)=\phi_{\tilde{u}_n}(x)$. Then (\ref{Cerami1}) gives
\begin{align*}
J(\tilde{u}_n)\rightarrow c, \quad J'(\tilde{u}_n)\rightarrow0, \quad I(\tilde{u}_n)\rightarrow0 \quad\hbox{as}~~ n\rightarrow\infty,
\end{align*}
and $\int_{B_1(0)}|\tilde{u}_n|^2dx>\delta_1$ for all $n\in\mathbb{N}$. Therefore, there exists $u\in H^1(\mathbb{R}^3)\setminus\{0\}$ such that, passing to a subsequence,
\begin{equation} \nonumber
\left\{
\begin{aligned}
&\tilde{u}_n\rightharpoonup u \quad\hbox{in}~~ H^1(\mathbb{R}^3),\\
&\tilde{u}_n\rightarrow u \quad\hbox{in}~~ L^r_{loc}(\mathbb{R}^3), ~\forall~ r\in[1, 6),\\
&\tilde{u}_n\rightarrow u \quad\hbox{a.e. in}~~ \mathbb{R}^3.
\end{aligned}
\right.
\end{equation}
By Lemma \ref{BL} and $\langle J'(\tilde{u}_n(x)), w(x)\rangle=\langle J'(u_n(x)), w(x-y_n)\rangle$, we obtain
\begin{align*}
0=&\lim\limits_{n\rightarrow\infty}\langle J'(\tilde{u}_n), w\rangle\\
=&\lim\limits_{n\rightarrow\infty}\left(\int\nabla\tilde{u}_n\cdot\nabla wdx+\omega\int\tilde{u}_nwdx-\mu\int\phi_{\tilde{u}_n}\tilde{u}_nwdx-\int|\tilde{u}_n|^{p-2}\tilde{u}_nwdx\right)\\
=&\int\nabla u\cdot wdx+\omega\int uwdx-\mu\int\phi_u uwdx-\int|u|^{p-2}uwdx\\
=&\langle J'(u), w\rangle,
\end{align*}
for any $w\in H^1(\mathbb{R}^3)$. Thus, $u$ is a nontrivial solution of Eq.(\ref{main2}).

\textbf{Case (2).} $p=6$ and $\mu>0$ large enough.

Let $\{u_n\}^{\infty}_{n=1}$ be a Cerami sequence obtained in Lemma \ref{Cerami-sequence1}. Then,
\begin{align}\nonumber
c+o_n(1)&=J(u_n)-\frac14\langle J'(u_n), u_n\rangle\\ \nonumber
&\frac14\|\nabla u_n\|^2_{L^2}+\frac{\omega}4\|u_n\|^2_{L^2}+\frac1{12}\|u_n\|^p_{L^p}\\ \label{proof-theorem1-7}
&\geq\frac14\min\{1, \omega\}\|u_n\|^2_{H^1},
\end{align}
which implies that the sequence $\{u_n\}^{\infty}_{n=1}$ is bounded in $H^1(\mathbb{R}^3)$. Thus, there exists $u\in H^1(\mathbb{R}^3)$ such that, up to a subsequence, $u_n\rightharpoonup u$ in $H^1(\mathbb{R}^3)$.

Now, we claim that $u\not\equiv0$. If not, then Lemma \ref{vanishing} means that $u_n\rightarrow0$ in $L^r(\mathbb{R}^3)$ for any $r\in(2, 6)$. Noticing that $\{u_n\}^{\infty}_{n=1}$ is bounded in $H^1(\mathbb{R}^3)$, going to a subsequence, we may assume that
$$\|\nabla u_n\|^2_{L^2}+\omega\|u_n\|^2_{L^2}\rightarrow l\in\mathbb{R}.$$
Moreover, since $\langle J'(u_n), u_n\rangle\rightarrow0$, we obtain
\begin{align*}
\|u_n\|^6_{L^6}=\|\nabla u_n\|^2_{L^2}+\omega\|u_n\|^2_{L^2}-\mu\int\phi_{u_n}|u_n|^2dx\rightarrow l,
\end{align*}
as $n\rightarrow\infty$. By the definition of the best constant $K$ in (\ref{best-constant}), we have
\begin{align}\label{proof-theorem1-11}
K\left(\int|u_n|^6dx\right)^{\frac13}\leq\int|\nabla u_n|^2dx,
\end{align}
which yields $l\geq Kl^{\frac13}$. Hence, we have either $l=0$ or $l\geq K^{\frac32}$. In the case $l\geq K^{\frac32}$, From  $J(u_n)\rightarrow c$ and $\langle J'(u_n), u_n\rangle\rightarrow0$, we know
\begin{align*}
c+o_n(1)&=J(u_n)=J(u_n)-\frac16\langle J'(u_n), u_n\rangle\\
&=\frac13\|\nabla u_n\|^2_{L^2}+\frac{\omega}3\|u_n\|^2_{L^2}-\frac{\mu}{12}\int\phi_{u_n}|u_n|^2dx\\
&=\frac13 l+o_n(1),
\end{align*}
which means $c=\frac13l\geq\frac13 K^{\frac32}$, which contradicts with the fact
$c<\frac13 K^{\frac32}$ obtained from Lemma \ref{proof-theorem1-lemma1}. In the case $l=0$, one has $\|\nabla u_n\|^2_{L^2}+\omega\|u_n\|^2_{L^2}\rightarrow0$, $\int\phi_{u_n}|u_n|^2dx\rightarrow0$ and $\|u_n\|^6_{L^6}\rightarrow0$. Therefore, $J(u_n)\rightarrow0$ as $n\rightarrow\infty$, which is absurd since $J(u_n)\rightarrow c>0$. Thus, $u$ does not vanish identically.

Next, we claim that $\{u_n\}^{\infty}_{n=1}$ has a convergent subsequence, that is, $u_n\rightarrow u$ in $H^1(\mathbb{R}^3)$ as $n\rightarrow\infty$. Define $v_n:=u_n-u$, then we know $v_n\rightharpoonup0$ in $H^1(\mathbb{R}^3)$ and $v_n\rightarrow0$ a.e. in $\mathbb{R}^3$. Moreover, by the Br\'{e}zis-Lieb Lemma in \cite{Brezis-Lieb1983} and Lemma B.2 in \cite{dAvenia2019}, we have
$$\|\nabla u_n\|^2_{L^2}=\|\nabla v_n\|^2_{L^2}+\|\nabla u\|^2_{L^2}+o_n(1),$$
$$\|u_n\|^6_{L^6}=\|v_n\|^6_{L^6}+\|u\|^6_{L^6}+o_n(1)$$
and
\begin{align*}
\int\phi_{u_n}|u_n|^2dx=\int\phi_{v_n}|v_n|^2dx+\int\phi_u|u|^2dx+o_n(1).
\end{align*}
Since $\int\phi_{v_n}|v_n|^2dx\leq\|v_n\|^4_{L^{\frac{12}5}}=o_n(1)$ as $n\rightarrow\infty$ and $J(u)\geq0$. In fact, for any $\varphi\in H^1(\mathbb{R}^3)$, we have $\langle J'(u_n), \varphi\rangle\rightarrow0$. Passing to the limit as $n\rightarrow\infty$, we obtain
$$\int\nabla u\cdot\nabla\varphi dx+\omega\int u\varphi dx-\mu\int\phi_u u\varphi dx-\int|u|^4u\varphi dx=0$$
for any $\varphi\in H^1(\mathbb{R}^3)$. Then, taking $\varphi=u\in H^1(\mathbb{R}^3)$, we get
$$\|\nabla u\|^2_{L^2}+\omega\|u\|^2_{L^2}=\mu\int\phi_u|u|^2dx+\|u\|^6_{L^6},$$
and so,
$$J(u)=\frac{\mu}4\int\phi_u|u|^2dx+\frac13\|u\|^6_{L^6}\geq0.$$
Consequently, we can get that
\begin{align}\nonumber
c+o_n(1)=&J(u_n)=\frac12\|\nabla u_n\|^2_{L^2}+\frac{\omega}2\|u_n\|^2_{L^2}-\frac{\mu}4\int\phi_{u_n}|u_n|^2dx-\frac16\|u_n\|^6_{L^6}\\ \nonumber
=&\frac12\|\nabla v_n\|^2_{L^2}+\frac{\omega}2\|v_n\|^2_{L^2}-\frac{\mu}4\int\phi_{v_n}|v_n|^2dx-\frac16\|v_n\|^6_{L^6}\\
\nonumber
&+\frac12\|\nabla u\|^2_{L^2}+\frac{\omega}2\|u\|^2_{L^2}-\frac{\mu}4\int\phi_{u}|u|^2dx-\frac16\|u\|^6_{L^6}+o_n(1)\\
\nonumber
=&J(u)+\frac12\|\nabla v_n\|^2_{L^2}+\frac{\omega}2\|v_n\|^2_{L^2}-\frac{\mu}4\int\phi_{v_n}|v_n|^2dx-\frac16\|v_n\|^6_{L^6}+o_n(1)\\
\label{proof-theorem1-8}
\geq&\frac12\left(\|\nabla v_n\|^2_{L^2}+\omega\|v_n\|^2_{L^2}\right)-\frac16\|v_n\|^6_{L^6}+o_n(1).
\end{align}
Similarly, since $\langle J'(u), u\rangle=0$, we have
\begin{align}\nonumber
o_n(1)=&\langle J'(u_n), u_n\rangle\\ \nonumber
=&\|\nabla u_n\|^2_{L^2}+\omega\|u_n\|^2_{L^2}-\mu\int\phi_{u_n}|u_n|^2dx-\|u_n\|^6_{L^6}\\
\nonumber
=&\|\nabla v_n\|^2_{L^2}+\omega\|v_n\|^2_{L^2}-\mu\int\phi_{v_n}|v_n|^2dx-\|v_n\|^6_{L^6}\\
\nonumber
&+\|\nabla u\|^2_{L^2}+\omega\|u\|^2_{L^2}-\mu\int\phi_u|u|^2dx-\|u\|^6_{L^6}+o_n(1)\\
\nonumber
=&\langle J'(u), u\rangle+\|\nabla v_n\|^2_{L^2}+\omega\|v_n\|^2_{L^2}-\mu\int\phi_{v_n}|v_n|^2dx-\|v_n\|^6_{L^6}+o_n(1)\\
\label{proof-theorem1-9}
=&\|\nabla v_n\|^2_{L^2}+\omega\|v_n\|^2_{L^2}-\|v_n\|^6_{L^6}+o_n(1).
\end{align}
From (\ref{proof-theorem1-9}), we know there exists a nonnegative constant $d$ such that
\begin{align*}
\|\nabla v_n\|^2_{L^2}+\omega\|v_n\|^2_{L^2}\rightarrow d, \quad \|v_n\|^6_{L^6}\rightarrow d
\end{align*}
as $n\rightarrow\infty$. Thus, from (\ref{proof-theorem1-8}), we obtain
\begin{align}\label{proof-theorem1-10}
c\geq\frac13 d.
\end{align}
It follows from (\ref{proof-theorem1-11}) that $d\geq Kd^{\frac13}$. Therefore, either $d=0$ or $d\geq K^{\frac32}$. If $d\geq K^{\frac32}$, then we obtain from (\ref{proof-theorem1-10}) that $c\geq\frac13d\geq\frac13 K^{\frac32}$, which contradicts with the fact that $c<\frac13K^{\frac32}$. Hence, $d=0$, and
$$\|\nabla v_n\|^2_{L^2}=\|\nabla(u_n-u)\|^2_{L^2}\rightarrow0, \quad \|v_n\|^2_{L^2}=\|u_n-u\|^2_{L^2}\rightarrow0,$$
that is,
$$\|u_n-u\|_{H^1}\rightarrow0$$
as $n\rightarrow\infty$. And so, we have $J$ has a critical value $c\in(0, \frac13K^{\frac32})$ and thus Eq.(\ref{main2}) has a nontrivial solution.

\textbf{Step 2.} $u\in W^{2, s}(\mathbb{R}^3)$ for every $s>1$.

For $u\in H^1(\mathbb{R}^3)$, the Sobolev embedding theorem implies that $u\in L^{s_0}(\mathbb{R}^3)$, where $s_0\in[2, 6]$. Thus, $|u|^{p-2}u\in L^{s_1}(\mathbb{R}^3)$, where $s_1\in\left[\max\{1, \frac2{p-1}\}, \frac6{p-1}\right]$. By Lemma \ref{HLS}, if
\begin{align}\label{proof-theorem1-3}
\frac1{s_2}=\frac2{s_0}-\frac23>0,
\end{align}
then $\phi_u\in L^{s_2}(\mathbb{R}^3)$. In view of the H\"{o}lder inequality, if
\begin{align}\label{proof-theorem1-4}
\frac1{s_3}=\frac1{s_0}+\frac1{s_2}=\frac3{s_0}-\frac23<1,
\end{align}
then we have $\phi_uu\in L^{s_3}(\mathbb{R}^3)$. By (\ref{proof-theorem1-3}) and (\ref{proof-theorem1-4}), we get $s_3\in[\frac65, 3)$. Set $L^{q}(\mathbb{R}^3):=L^{s_1}(\mathbb{R}^3)\cap L^{s_3}(\mathbb{R}^3)$. Since $\left[\max\{1, \frac2{p-1}\}, \frac6{p-1}\right]\cap[\frac65, 3)\neq\emptyset$, we have
$$-\Delta u+\omega u=\mu\phi_uu+|u|^{p-2}u\in L^{q}(\mathbb{R}^3).$$
Hence, $u\in W^{2, q}(\mathbb{R}^3)$ for $q>1$ by using the classical Calder\'{o}n-Zygmund $L^p$ regularity estimates \cite[Chapter 9]{Gilbarg1983}. According to the Sobolev embedding theorem, $u\in L^{t_1}(\mathbb{R}^3)$ for $t_1\in\left[q, \frac{3q}{3-2q}\right]$ when $q<\frac32$, and $u\in L^{t_2}(\mathbb{R}^3)$ for $t_2\in[q, \infty)$ when $q\geq\frac32$.

The rest of the argument is similar to the proof of Theorem 1.1 in \cite{Li2019} (see also Proposition 4.1 in \cite{Moroz2013}). The proof is complete.
\end{proof}

\renewcommand{\theequation}
{\thesection.\arabic{equation}}
\setcounter{equation}{0}
\section{Ground state solutions} \noindent

In this section, we give the proof of the existence and related properties of ground state solutions. First, we prove the existence of the ground state solutions to Eq.(\ref{main2}) and give the minimax property of the ground state energy of $J$. Then, we establish the positivity, radial symmetry, rotational invariance, and exponential decay of the ground state solutions.

By a simple calculation, we have the following lemma.
\begin{lemma}\label{preliminary-lemma-0}
Let $p\in[4, 6]$. Then,
\begin{align*}
f(\tau):=4\tau^p-p\tau^4+p-4\geq0, \quad \forall~ \tau>0.
\end{align*}
\end{lemma}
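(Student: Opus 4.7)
The plan is to reduce this to a one-variable calculus exercise. First I would observe that $\tau = 1$ is automatically a root, since
\[
f(1) = 4 - p + p - 4 = 0.
\]
So the whole lemma amounts to showing that $\tau = 1$ is a global minimizer of $f$ on $(0, +\infty)$.

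To do this, I would differentiate and factor:
\[
f'(\tau) = 4p\tau^{p-1} - 4p\tau^3 = 4p\,\tau^3\bigl(\tau^{p-4} - 1\bigr).
\]
The factor $4p\tau^3$ is strictly positive on $(0, +\infty)$, so the sign of $f'(\tau)$ is governed by $\tau^{p-4} - 1$. For $p \in (4, 6]$ the exponent $p - 4 > 0$, hence $\tau^{p-4} - 1 < 0$ on $(0, 1)$ and $> 0$ on $(1, +\infty)$, which means $f$ is strictly decreasing on $(0, 1)$ and strictly increasing on $(1, +\infty)$. Combined with $f(1) = 0$, this gives $f(\tau) \geq 0$ for all $\tau > 0$, with equality only at $\tau = 1$. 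The boundary case $p = 4$ is even simpler: then $f(\tau) \equiv 4\tau^4 - 4\tau^4 + 0 = 0$ identically, so the inequality holds trivially.

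There is no real obstacle here; the only thing to be a little careful about is the factorization of $f'$ and the observation that the sign analysis works uniformly for the whole range $p \in [4, 6]$ (handling $p = 4$ separately if one insists on a strictly positive derivative, but it is not needed since the lemma only claims $\geq 0$).
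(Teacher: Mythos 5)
Your argument is correct and is precisely the ``simple calculation'' the paper alludes to without writing out: computing $f'(\tau)=4p\tau^{3}(\tau^{p-4}-1)$, noting $f(1)=0$, and concluding that $\tau=1$ is the global minimum for $p\in(4,6]$, with the degenerate identity $f\equiv 0$ when $p=4$. Nothing is missing.
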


In order to obtain the ground state solutions of Eq.(\ref{main2}), we first establish some preliminary lemmas.

\begin{lemma}\label{preliminary-lemma-1}
Let $p\in[4, 6]$. Then,
\begin{align}\label{proof-theorem2-1}
J(u)\geq J(\tau u)+\frac{1-\tau^4}4\langle J'(u), u\rangle+\frac{(1-\tau^2)^2}4\left(\|\nabla u\|^2_{L^2}+\omega\|u\|^2_{L^2}\right)
\end{align}
for all $u\in H^1(\mathbb{R}^3)$ and $\tau>0$.
\end{lemma}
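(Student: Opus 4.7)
The plan is to verify the inequality by direct algebraic expansion, after observing that the constant $\tfrac14(1-\tau^4)$ in front of $\langle J'(u),u\rangle$ is precisely the coefficient that eliminates the troublesome nonlocal contribution. Using the homogeneity $\phi_{\tau u}=\tau^2\phi_u$, one has
\[
\tfrac{\mu}{4}\!\int\phi_{\tau u}|\tau u|^2\,dx=\tfrac{\mu\tau^4}{4}\!\int\phi_u|u|^2\,dx,
\]
so the quartic nonlocal pieces in $J(u)$, $J(\tau u)$ and $\tfrac14(1-\tau^4)\langle J'(u),u\rangle$ cancel identically. This is the conceptual reason the lemma holds without any sign assumption on the nonlocal term; the choice of multiplier $\tfrac{1-\tau^4}{4}$ is dictated by the scaling of the only quartic ingredient.

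Concretely, I would form
\[
\Delta(\tau):=J(u)-J(\tau u)-\tfrac{1-\tau^4}{4}\langle J'(u),u\rangle
\]
and split it into three pieces according to the splitting of $J$. The quadratic part contributes
\[
\Bigl(\tfrac{1-\tau^2}{2}-\tfrac{1-\tau^4}{4}\Bigr)\bigl(\|\nabla u\|^2_{L^2}+\omega\|u\|^2_{L^2}\bigr)
=\tfrac{(1-\tau^2)^2}{4}\bigl(\|\nabla u\|^2_{L^2}+\omega\|u\|^2_{L^2}\bigr),
\]
the nonlocal part contributes $0$ by the cancellation above, and the $L^p$ part contributes
\[
\tfrac{1}{p}\bigl(\tau^p-1\bigr)\|u\|_{L^p}^p+\tfrac{1-\tau^4}{4}\|u\|_{L^p}^p
=\tfrac{1}{4p}\bigl(4\tau^p-p\tau^4+p-4\bigr)\|u\|_{L^p}^p
=\tfrac{f(\tau)}{4p}\|u\|_{L^p}^p,
\]
with $f$ as in Lemma~\ref{preliminary-lemma-0}.

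To conclude, I would invoke Lemma~\ref{preliminary-lemma-0} to deduce $f(\tau)\ge 0$ for $\tau>0$ and $p\in[4,6]$, which yields $\tfrac{f(\tau)}{4p}\|u\|_{L^p}^p\ge 0$ and hence
\[
\Delta(\tau)\ge \tfrac{(1-\tau^2)^2}{4}\bigl(\|\nabla u\|^2_{L^2}+\omega\|u\|^2_{L^2}\bigr),
\]
which is exactly \eqref{proof-theorem2-1}. I do not expect any real obstacle: the nonlocal cancellation is forced by the scaling, the quadratic rearrangement is the identity $2(1-\tau^2)-(1-\tau^4)=(1-\tau^2)^2$, and the only nontrivial ingredient—positivity of the polynomial $f$ on $(0,\infty)$ for $p\in[4,6]$—is handed to us by the preceding lemma. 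The restriction $p\ge 4$ enters precisely here: for $p<4$ the sign of $f$ on $(0,1)$ can fail, which is why the hypothesis of the lemma is sharp in this respect.
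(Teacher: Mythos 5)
Your proposal is correct and follows essentially the same route as the paper: expand $J(u)-J(\tau u)$, insert $\tfrac{1-\tau^4}{4}\langle J'(u),u\rangle$ so that the quartic nonlocal term cancels via $\phi_{\tau u}=\tau^2\phi_u$, use the identity $\tfrac{1-\tau^2}{2}-\tfrac{1-\tau^4}{4}=\tfrac{(1-\tau^2)^2}{4}$ for the quadratic part, and drop the nonnegative $L^p$ remainder $\tfrac{f(\tau)}{4p}\|u\|_{L^p}^p$ via Lemma \ref{preliminary-lemma-0}. All algebra checks out, and your remark on why $p\ge 4$ is needed is an accurate reading of where the hypothesis enters.
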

\begin{proof}
For all $u\in H^1(\mathbb{R}^3)$ and all $\tau>0$, by Lemma \ref{preliminary-lemma-0}, we have
\begin{align*}
J(u)-J(\tau u)=&\frac{1-\tau^2}2\|\nabla u\|^2_{L^2}+\frac{1-\tau^2}2\omega\|u\|^2_{L^2}-\frac{1-\tau^4}4\mu\int\phi_u|u|^2dx-\frac{1-\tau^p}p\|u\|^p_{L^p}\\
=&\frac{1-\tau^4}4\langle J'(u), u\rangle+\left(\frac{1-\tau^2}2-\frac{1-\tau^4}4\right)\left(\|\nabla u\|^2_{L^2}+\omega\|u\|^2_{L^2}\right)\\
&+\left(\frac{1-\tau^4}4-\frac{1-\tau^p}p\right)\|u\|^p_{L^p}\\
\geq&\frac{1-\tau^4}4\langle J'(u), u\rangle+\frac{(1-\tau^2)^2}4\left(\|\nabla u\|^2_{L^2}+\omega\|u\|^2_{L^2}\right).
\end{align*}
This shows (\ref{proof-theorem2-1}).
\end{proof}

Remark that (\ref{proof-theorem2-1}) with $\tau\rightarrow0$ gives
\begin{align}\label{proof-theorem2-2}
J(u)\geq\frac14\langle J'(u), u\rangle+\frac14\left(\|\nabla u\|^2_{L^2}+\omega\|u\|^2_{L^2}\right), \quad\forall~ u\in H^1(\mathbb{R}^2).
\end{align}

From Lemma \ref{preliminary-lemma-1}, we have the following corollary.
\begin{corollary}\label{preliminary-corollary}
Let $p\in[4, 6]$. Then for all $u\in\mathcal{N}$,
$$J(u)=\max\limits_{\tau>0}J(\tau u).$$
\end{corollary}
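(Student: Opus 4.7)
The plan is to apply Lemma \ref{preliminary-lemma-1} directly, exploiting the defining property of the Nehari manifold $\mathcal{N}$ to eliminate one of the correction terms. Fix $u \in \mathcal{N}$ and an arbitrary $\tau > 0$. Lemma \ref{preliminary-lemma-1} gives
\begin{align*}
J(u) \geq J(\tau u) + \frac{1-\tau^4}{4}\langle J'(u), u\rangle + \frac{(1-\tau^2)^2}{4}\bigl(\|\nabla u\|^2_{L^2} + \omega \|u\|^2_{L^2}\bigr).
\end{align*}

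Since $u \in \mathcal{N}$, by definition $\langle J'(u), u\rangle = 0$, so the middle term on the right vanishes regardless of the sign of $1-\tau^4$. The remaining correction term $\frac{(1-\tau^2)^2}{4}(\|\nabla u\|^2_{L^2} + \omega \|u\|^2_{L^2})$ is manifestly nonnegative (recall $\omega > 0$), so I would conclude $J(u) \geq J(\tau u)$ for every $\tau > 0$. Taking $\tau = 1$ trivially gives $J(u) = J(1 \cdot u)$, so the supremum on the right is attained and equals $J(u)$; therefore $J(u) = \max_{\tau > 0} J(\tau u)$.

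There is essentially no obstacle here, as the corollary is a one-line consequence of Lemma \ref{preliminary-lemma-1} once the condition $\langle J'(u), u\rangle = 0$ is invoked; the only thing worth noting is that the strict positivity of the quadratic correction ensures $\tau = 1$ is in fact the unique maximizer whenever $u \not\equiv 0$, which is automatic on $\mathcal{N}$. The hypothesis $p \in [4,6]$ is inherited purely through its use in Lemma \ref{preliminary-lemma-0} (and hence Lemma \ref{preliminary-lemma-1}), and plays no additional role in this step.
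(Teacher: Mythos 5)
Your proof is correct and is exactly the argument the paper intends: the corollary is stated as an immediate consequence of Lemma \ref{preliminary-lemma-1}, obtained by setting $\langle J'(u),u\rangle=0$ for $u\in\mathcal{N}$ and noting the remaining correction term is nonnegative, with equality at $\tau=1$. Your additional remark that $\tau=1$ is the unique maximizer is also consistent with the paper's Lemma \ref{preliminary-lemma-2}.
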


\begin{lemma}\label{preliminary-lemma-2}
Let $p\in[4, 6]$. Then for any $u\in H^1(\mathbb{R}^3)\setminus\{0\}$, there exists a unique $\tau_u>0$ such that $\tau_u u\in\mathcal{N}$.
\end{lemma}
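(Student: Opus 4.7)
The plan is to reduce the Nehari condition $\tau u\in\mathcal{N}$ to a one-variable root-finding problem for an explicit monotonic function of $\tau$. First I would use the homogeneity $\phi_{\tau u}=\tau^2\phi_u$ (which is immediate from $\phi_u=\mathcal{K}\ast|u|^2$) to compute
$$\langle J'(\tau u),\tau u\rangle=\tau^2\bigl(\|\nabla u\|^2_{L^2}+\omega\|u\|^2_{L^2}\bigr)-\mu\tau^4\!\int\phi_u|u|^2dx-\tau^p\|u\|^p_{L^p}.$$
Since $\tau>0$, dividing by $\tau^2$ shows that $\tau u\in\mathcal{N}$ is equivalent to
$$g(\tau):=\mu\tau^2\!\int\phi_u|u|^2dx+\tau^{p-2}\|u\|^p_{L^p}=A,\qquad A:=\|\nabla u\|^2_{L^2}+\omega\|u\|^2_{L^2}.$$
Note that $A>0$ since $u\not\equiv 0$ and $\omega>0$.

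Next I would analyze $g:(0,\infty)\to[0,\infty)$. By Lemma \ref{property2}(2), $\phi_u\geq 0$, so the coefficient $\int\phi_u|u|^2dx\geq 0$; also $\|u\|^p_{L^p}>0$ as $u\not\equiv 0$. For $p\in[4,6]$ we have $p-2\geq 2>0$, so both $\tau\mapsto\tau^2$ and $\tau\mapsto\tau^{p-2}$ are continuous and strictly increasing on $(0,\infty)$. Hence $g$ is continuous and strictly increasing, with $g(\tau)\to 0$ as $\tau\to 0^{+}$ and $g(\tau)\to+\infty$ as $\tau\to+\infty$. The intermediate value theorem together with strict monotonicity then yields a unique $\tau_u>0$ such that $g(\tau_u)=A$, which is exactly the claim.

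There is really no serious obstacle here: the only care needed is to justify that the Nehari equation genuinely reduces to the scalar problem (which uses the homogeneity of $\phi_u$) and that all coefficients have the right sign. Although $p\in[4,6]$ is assumed for consistency with Lemma \ref{preliminary-lemma-1} and Corollary \ref{preliminary-corollary}, the monotonicity argument actually works for any $p>2$, so the lemma is as sharp as one could hope. As a cross-check, Corollary \ref{preliminary-corollary} forces the map $\tau\mapsto J(\tau u)$ to attain its maximum at $\tau_u$, which is compatible with $\frac{d}{d\tau}J(\tau u)=\frac{1}{\tau}\langle J'(\tau u),\tau u\rangle$ changing sign from positive to negative exactly once.
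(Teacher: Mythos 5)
Your proof is correct, and its uniqueness part takes a genuinely different route from the paper's. The paper also reduces to a one-variable problem, setting $g(\tau):=J(\tau u)$ and getting existence of a critical $\tau_u$ from $g(\tau)>0$ for small $\tau$ and $g(\tau)<0$ for large $\tau$; but for uniqueness it invokes the key inequality of Lemma \ref{preliminary-lemma-1} twice, once for $(\tau_1,\tau_2)$ and once with the roles swapped, so that the two resulting inequalities force $(\tau_1^2-\tau_2^2)^2\left(\|\nabla u\|^2_{L^2}+\omega\|u\|^2_{L^2}\right)\leq0$ and hence $\tau_1=\tau_2$. You instead divide the Nehari identity by $\tau^2$ and observe that $\tau\mapsto\mu\tau^2\int\phi_u|u|^2dx+\tau^{p-2}\|u\|^p_{L^p}$ is continuous, strictly increasing, and ranges over $(0,\infty)$, so it meets the level $A=\|\nabla u\|^2_{L^2}+\omega\|u\|^2_{L^2}>0$ exactly once. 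This works precisely because $\mu>0$ places the nonlocal term on the same side of the Nehari identity as the power nonlinearity; it is a genuine simplification available in this sign regime (and, as you correctly note, it proves the lemma for every $p>2$, with no need for Lemma \ref{preliminary-lemma-0} or Lemma \ref{preliminary-lemma-1}). What the paper's route buys is economy elsewhere: the inequality of Lemma \ref{preliminary-lemma-1} is reused for Corollary \ref{preliminary-corollary}, for the bound (\ref{proof-theorem2-2}), and for the minimax characterization in Lemma \ref{preliminary-lemma-4}, so deriving uniqueness from it keeps Section 4 running on a single estimate. The only detail worth making explicit in your version is that $\|u\|_{L^p}>0$ (Sobolev embedding together with $u\not\equiv0$), so the strictly increasing term $\tau^{p-2}\|u\|^p_{L^p}$ is genuinely present even when $\int\phi_u|u|^2dx$ happens to contribute nothing new to the monotonicity.
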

\begin{proof}
Let $u\in H^1(\mathbb{R}^3)\setminus\{0\}$ be fixed and define the function
\begin{align}\label{proof-theorem2-6}
g(\tau):=J(\tau u)=\frac{\tau^2}2\|\nabla u\|^2_{L^2}+\frac{\tau^2}2\omega\|u\|^2_{L^2}-\frac{\tau^4}4\mu\int\phi_u|u|^2dx-\frac{\tau^p}p\|u\|^p_{L^p}
\end{align}
on $(0, \infty)$. Direct calculation has
\begin{align*}
g'(\tau)=\tau\|\nabla u\|^2_{L^2}+\tau\omega\|u\|^2_{L^2}-\tau^3\mu\int\phi_u|u|^2dx-\tau^{p-1}\|u\|^p_{L^p}.
\end{align*}
Then, using (\ref{Nehari-functional}) and (\ref{Nehari-manifold}), it is easy to get that
$$g'(\tau)=0\quad\Leftrightarrow\quad\frac1{\tau}\langle J'(\tau u), \tau u\rangle=0\quad\Leftrightarrow\quad\tau u\in\mathcal{N}.$$
Note that $\lim\limits_{\tau\rightarrow0^+}g(\tau)=0$, $g(\tau)>0$ for $\tau>0$ small enough and $g(\tau)<0$ for $\tau$ large enough. Therefore, $\max\limits_{\tau>0}g(\tau)$ is achieved at $\tau=\tau_u>0$ such that $g'(\tau_u)=0$ and $\tau_u u\in\mathcal{N}$.

Next, we claim that $\tau_u$ is unique for any $u\in H^1(\mathbb{R}^3\setminus\{0\})$. Indeed, for any $u\in H^1(\mathbb{R}^3\setminus\{0\})$, let $\tau_1, \tau_2>0$ be such that $g'(\tau_1)=g'(\tau_2)=0$. Then $\langle J'(\tau_1 u), \tau_1u\rangle=\langle J'(\tau_2 u), \tau_2u\rangle=0$. By Lemma \ref{preliminary-lemma-1}, we have
\begin{align}\nonumber
J(\tau_1 u)\geq& J(\tau_2u)+\frac{\tau_1^4-\tau_2^4}{4\tau_1^4}\langle J'(\tau_1u), \tau_1u\rangle+\frac{(\tau_1^2-\tau_2^2)^2}{4\tau_1^2}\left(\|\nabla u\|^2_{L^2}+\omega\|u\|^2_{L^2}\right)\\ \label{proof-theorem2-3}
=&J(\tau_2u)+\frac{(\tau_1^2-\tau_2^2)^2}{4\tau_1^2}\left(\|\nabla u\|^2_{L^2}+\omega\|u\|^2_{L^2}\right),
\end{align}
and
\begin{align}\nonumber
J(\tau_2 u)\geq& J(\tau_1u)+\frac{\tau_2^4-\tau_1^4}{4\tau_2^4}\langle J'(\tau_2u), \tau_2u\rangle+\frac{(\tau_2^2-\tau_1^2)^2}{4\tau_2^2}\left(\|\nabla u\|^2_{L^2}+\omega\|u\|^2_{L^2}\right)\\ \label{proof-theorem2-4}
=&J(\tau_1u)+\frac{(\tau_2^2-\tau_1^2)^2}{4\tau_2^2}\left(\|\nabla u\|^2_{L^2}+\omega\|u\|^2_{L^2}\right).
\end{align}
Then (\ref{proof-theorem2-3}) and (\ref{proof-theorem2-4}) give $\tau_1=\tau_2$. Thus, $t_u>0$ is unique for any $u\in H^1(\mathbb{R}^3)\setminus\{0\}$.
\end{proof}

\begin{lemma}\label{preliminary-lemma-3}
Let $p\in[4, 6]$. Then,
\begin{itemize}
 \item[(1)] there exists $\rho>0$ such that $\|u\|_{H^1}\geq\rho$, $\forall u\in\mathcal{N}$;
 \item[(2)] $\mathcal{N}$ is a natural constraint for the functional $J$, i.e., critical points of $J$ on $\mathcal{N}$ are critical points of $J$ on $H^1(\mathbb{R}^3)$;
 \item[(3)] $c_g=\inf\limits_{u\in\mathcal{N}}J(u)>0$.
\end{itemize}
\end{lemma}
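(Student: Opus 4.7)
The plan is to prove the three assertions sequentially, leveraging the Hardy--Littlewood--Sobolev bound and the key inequality (\ref{proof-theorem2-2}).

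For part (1), I would start from the Nehari identity $\langle J'(u),u\rangle=0$, rearranged as
$$\|\nabla u\|^2_{L^2}+\omega\|u\|^2_{L^2}=\mu\int\phi_u|u|^2dx+\|u\|^p_{L^p}.$$
The consequence of Lemma \ref{HLS} noted after its statement gives $\int\phi_u|u|^2dx\le C\|u\|^4_{L^{12/5}}\le C_1\|u\|^4_{H^1}$, while the Sobolev embedding (valid for $p\in[4,6]\subset[2,6]$) yields $\|u\|^p_{L^p}\le C_2\|u\|^p_{H^1}$. Combining these and dividing by $\|u\|^2_{H^1}\ne 0$ produces an inequality of the form $\min\{1,\omega\}\le C_1\|u\|^2_{H^1}+C_2\|u\|^{p-2}_{H^1}$; since $p\ge 4>2$, the right side vanishes as $\|u\|_{H^1}\to 0$, so a uniform lower bound $\|u\|_{H^1}\ge\rho>0$ follows.

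Part (3) then falls out immediately: for every $u\in\mathcal N$ one has $\langle J'(u),u\rangle=0$, so (\ref{proof-theorem2-2}) reduces to
$$J(u)\ge\tfrac14\bigl(\|\nabla u\|^2_{L^2}+\omega\|u\|^2_{L^2}\bigr)\ge\tfrac14\min\{1,\omega\}\rho^2>0,$$
and taking the infimum gives $c_g>0$.

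For part (2), I would argue by Lagrange multipliers. Set $\Phi(u):=\langle J'(u),u\rangle$, so $\mathcal N=\{u\ne 0:\Phi(u)=0\}$. If $u\in\mathcal N$ is a critical point of $J|_{\mathcal N}$, then $J'(u)=\lambda\Phi'(u)$ for some $\lambda\in\mathbb R$; pairing with $u$ and using $\Phi(u)=0$ gives $0=\lambda\langle\Phi'(u),u\rangle$. Using the homogeneity $\phi_{tu}=t^2\phi_u$, which implies $\Phi(tu)=t^2(\|\nabla u\|^2_{L^2}+\omega\|u\|^2_{L^2})-t^4\mu\int\phi_u|u|^2dx-t^p\|u\|^p_{L^p}$, one differentiates at $t=1$ to get
$$\langle\Phi'(u),u\rangle=2\bigl(\|\nabla u\|^2_{L^2}+\omega\|u\|^2_{L^2}\bigr)-4\mu\int\phi_u|u|^2dx-p\|u\|^p_{L^p};$$
substituting the Nehari identity to eliminate the $H^1$-part yields $\langle\Phi'(u),u\rangle=-2\mu\int\phi_u|u|^2dx-(p-2)\|u\|^p_{L^p}$. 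Since $\phi_u\ge0$, $\mu>0$, $p\ge 4>2$ and $\|u\|^p_{L^p}>0$, this quantity is strictly negative, forcing $\lambda=0$ and hence $J'(u)=0$.

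The main technical point is part (2): one must differentiate $\Phi$ through the nonlocal term, for which the scaling $\phi_{tu}=t^2\phi_u$ is the clean route, and the strict-sign identity for $\langle\Phi'(u),u\rangle$ depends crucially on the hypothesis $p\ge 4$. Parts (1) and (3) are comparatively routine, as (1) is a standard concentration-coercivity computation and (3) merely assembles (1) with the already-proved inequality (\ref{proof-theorem2-2}).
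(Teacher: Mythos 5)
Your proof is correct and follows essentially the same route as the paper: part (1) via the Hardy--Littlewood--Sobolev and Sobolev bounds on the Nehari identity, part (2) via a Lagrange-multiplier argument hinging on the strict negativity of $\langle\Phi'(u),u\rangle$ on $\mathcal{N}$, and part (3) by combining (\ref{proof-theorem2-2}) with part (1). The only cosmetic difference is in part (2): you substitute the Nehari identity to eliminate the gradient terms, obtaining $-2\mu\int\phi_u|u|^2dx-(p-2)\|u\|^p_{L^p}<0$ (which in fact needs only $p>2$, so your closing remark that the sign ``depends crucially on $p\ge 4$'' overstates the matter), whereas the paper eliminates the nonlocal term and uses $p\ge 4$ to discard $(4-p)\|u\|^p_{L^p}$.
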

\begin{proof}
(1) Since $\langle J'(u), u\rangle=0$ for any $u\in\mathcal{N}$, by (\ref{Nehari-functional}) and the Sobolev embedding theorem, we have
\begin{align}\label{proof-theorem2-5}
\min\{1, \omega\}\|u\|^2_{H^1}\leq\mu\int\phi_u|u|^2dx+\|u\|^p_{L^p}\leq\max\{C_1, C_2\}\left(\|u\|^4_{H^1}+\|u\|^p_{H^1}\right).
\end{align}
Set $C_0:=\frac{\min\{1, \omega\}}{\max\{C_1, C_2\}}>0$, then (\ref{proof-theorem2-5}) implies that
$$\|u\|_{H^1}\left(\|u\|^{p-2}_{H^1}+\|u\|^2_{H^1}-C_0\right)\geq0.$$
We consider the function: $\zeta(t):=t^{p-2}+t^2-C_0$, where $t\in(0, \infty)$ and $C_0>0$. Simple calculaton shows that $\zeta(t)$ is strictly increasing on $(0, \infty)$. Furthermore, $\zeta(t)>0$ holds when $t$ is sufficiently large. Consequently, there necessarily exists a $t_0>0$ such that $\zeta(t)\geq0$ for all $t\geq t_0$. This implies the existence of a constant $\rho>0$ such that $\|u\|_{H^1}\geq\rho$ for all $u\in\mathcal{N}$.

(2) For each $u\in\mathcal{N}$, we define $\mathcal{A}(u):=\langle J'(u), u\rangle$, by a direct computation,
\begin{align}\nonumber
\langle\mathcal{A}'(u), u\rangle=&2\|\nabla u\|^2_{L^2}+\omega\|u\|^2_{L^2}-4\mu\int\phi_u|u|^2dx-p\|u\|^p_{L^p}\\ \nonumber
=&-2\|\nabla u\|^2_{L^2}-2\omega\|u\|^2_{L^2}+(4-p)\|u\|^p_{L^p}\\ \nonumber
\leq&-2\min\{1, \omega\}\|u\|^2_{H^1}\\ \label{proof-theorem2-15}
\leq&-2\min\{1, \omega\}\rho^2<0.
\end{align}
Then, there exists $\Lambda\in\mathbb{R}$ such that $J'(u)=\Lambda\mathcal{A}'(u)$. Therefore,
$$0=\langle J'(u), u\rangle=\Lambda\langle\mathcal{A}'(u), u\rangle,$$
which implies $\Lambda=0$ by (\ref{proof-theorem2-15}) and then $J'(u)=\Lambda\mathcal{A}'(u)=0$.

(3) Let $\{u_n\}\subset\mathcal{N}$ be such that $c_g=\lim\limits_{n\rightarrow\infty}J(u_n)$. By (\ref{proof-theorem2-2}) and item (1), we obtain
\begin{align*}
c_g+o_n(1)=J(u_n)=J(u_n)-\frac14\langle J'(u_n), u_n\rangle\geq\frac14\{1, \omega\}\rho^2>0,
\end{align*}
which means that $c_g=\inf\limits_{u\in\mathcal{N}}J(u)>0$. This completes the proof.
\end{proof}

Combining Corollary \ref{preliminary-corollary}, Lemmas \ref{preliminary-lemma-2} and \ref{preliminary-lemma-3}, we establish the following minimax property.
\begin{lemma}\label{preliminary-lemma-4}
Let $p\in[4, 6]$. Then
$$c_g=\inf\limits_{u\in\mathcal{N}}J(u)=\inf\limits_{u\in H^1(\mathbb{R}^3)\setminus\{0\}}\max\limits_{\tau>0}J(\tau u)>0.$$
\end{lemma}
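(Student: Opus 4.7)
The plan is short: all substantive analytical work has already been performed in Corollary \ref{preliminary-corollary}, Lemma \ref{preliminary-lemma-2}, and Lemma \ref{preliminary-lemma-3}, so only a bookkeeping argument remains. The statement splits into three assertions: (a) $c_g>0$, (b) $c_g=\inf_{u\in\mathcal{N}}J(u)$, and (c) this common value equals $\inf_{u\in H^1(\mathbb{R}^3)\setminus\{0\}}\max_{\tau>0}J(\tau u)$. Assertions (a) and (b) are precisely Lemma \ref{preliminary-lemma-3}(3), so I will cite that and concentrate on (c).

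For the inequality $\inf_{u\in\mathcal{N}}J(u)\le\inf_{u\in H^1(\mathbb{R}^3)\setminus\{0\}}\max_{\tau>0}J(\tau u)$, I would fix an arbitrary $u\in H^1(\mathbb{R}^3)\setminus\{0\}$ and invoke Lemma \ref{preliminary-lemma-2} to produce the unique $\tau_u>0$ with $\tau_u u\in\mathcal{N}$. Applying Corollary \ref{preliminary-corollary} to the element $\tau_u u\in\mathcal{N}$ gives
\[
J(\tau_u u)=\max_{\sigma>0}J(\sigma\,\tau_u u),
\]
and the change of variable $s=\sigma\tau_u$ on the right-hand side turns this into $J(\tau_u u)=\max_{s>0}J(su)$. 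Since $\tau_u u\in\mathcal{N}$, it follows that $\inf_{v\in\mathcal{N}}J(v)\le J(\tau_u u)=\max_{s>0}J(su)$; taking the infimum over $u\in H^1(\mathbb{R}^3)\setminus\{0\}$ yields one direction.

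For the reverse inequality, I would take any $v\in\mathcal{N}$ and note that, by Corollary \ref{preliminary-corollary} again, $J(v)=\max_{\tau>0}J(\tau v)\ge\inf_{u\in H^1(\mathbb{R}^3)\setminus\{0\}}\max_{\tau>0}J(\tau u)$. Taking the infimum over $v\in\mathcal{N}$ completes the identification. Finally, combining with Lemma \ref{preliminary-lemma-3}(3) delivers the full chain of equalities together with strict positivity.

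There is no real obstacle; the only point requiring a moment of care is the reindexing $s=\sigma\tau_u$ used to pass from the supremum over scalings of $\tau_u u$ to the supremum over scalings of $u$, which is legitimate because $\tau_u>0$. All other ingredients (existence and uniqueness of $\tau_u$, the fact that $J$ attains its maximum along each ray through the origin precisely at its intersection with $\mathcal{N}$, and the strictly positive lower bound on $J$ over $\mathcal{N}$) are available off the shelf.
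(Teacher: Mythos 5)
Your argument is correct and matches the paper's intent exactly: the paper states this lemma with no written proof, remarking only that it follows by combining Corollary \ref{preliminary-corollary}, Lemma \ref{preliminary-lemma-2} and Lemma \ref{preliminary-lemma-3}, and your two-inequality bookkeeping (with the reindexing $s=\sigma\tau_u$) is precisely the standard way to fill in that combination. No issues.
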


By Theorem \ref{theorem1}, we can obtain the following lemma.

\begin{lemma}[Lifting a solution to a path]\label{preliminary-lemma-5}
Let $p\in[4, 6]$ and $u\in H^1(\mathbb{R}^3)\setminus\{0\}$ be a solution of Eq.(\ref{main2}). Then there exists a path $\gamma\in C([0, 1], H^1(\mathbb{R}^3))$ such that
$$\gamma(0)=0,~~ \gamma(\frac12)=u,~~ J(\gamma(1))<0$$
and
$$J(\gamma(t))<J(u),~~ \hbox{for every}~~ t\in[0, 1]\setminus\{\frac12\}.$$
\end{lemma}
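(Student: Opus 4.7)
The plan is to produce $\gamma$ as a reparametrised ray $\gamma(t) = \phi(t)\,u$, where $\phi$ is a piecewise linear scalar function with $\phi(0) = 0$, $\phi(1/2) = 1$, $\phi(1) = T$ for some large $T > 1$, and to deduce the required strict inequality $J(\gamma(t)) < J(u)$ from the fact that $\tau = 1$ is the unique maximiser of $J(\tau u)$ on $[0,\infty)$.

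First I would use that $u$ is a nontrivial critical point of $J$, so $\langle J'(u), u\rangle = 0$ and $u \in \mathcal{N}$. By Lemma \ref{preliminary-lemma-2}, the function $g(\tau) := J(\tau u)$ on $(0,\infty)$ has a unique critical point, and Corollary \ref{preliminary-corollary} identifies this unique critical point as the point where $g$ attains its maximum; since $u\in\mathcal{N}$, that point is $\tau = 1$. From the proof of Lemma \ref{preliminary-lemma-2} one also has $g(0) = 0$, $g(\tau) > 0$ for small $\tau > 0$ and $g(\tau)\to -\infty$ as $\tau \to \infty$. Combined with $J(u) \geq c_g > 0$ from Lemma \ref{preliminary-lemma-3}(3), these facts yield
\begin{equation*}
J(\tau u) = g(\tau) < g(1) = J(u) \qquad \text{for every } \tau \in [0,\infty)\setminus\{1\},
\end{equation*}
which is the quantitative fact on which everything hinges.

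Next I would choose $T > 1$ large enough that $J(Tu) < 0$ (possible since $g(\tau)\to -\infty$) and define
\begin{equation*}
\gamma(t) := \phi(t)\, u, \qquad \phi(t) := \begin{cases} 2t, & t\in[0,1/2],\\ 1 + 2(T-1)(t-1/2), & t\in[1/2, 1]. \end{cases}
\end{equation*}
Then $\phi$ is continuous and strictly increasing on $[0,1]$, with $\phi(0) = 0$, $\phi(1/2) = 1$, $\phi(1) = T$, so $\gamma\in C([0,1], H^1(\mathbb{R}^3))$ satisfies $\gamma(0) = 0$, $\gamma(1/2) = u$, and $J(\gamma(1)) = J(Tu) < 0$. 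For every $t \in [0,1]\setminus\{1/2\}$ one has $\phi(t) \neq 1$, whence $J(\gamma(t)) = g(\phi(t)) < g(1) = J(u)$ by the previous display.

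Given Corollary \ref{preliminary-corollary} and Lemmas \ref{preliminary-lemma-2}--\ref{preliminary-lemma-3}, I do not expect any genuine obstacle; the construction is essentially mechanical. The only point that requires a moment's care is that the uniqueness supplied by Lemma \ref{preliminary-lemma-2} is uniqueness of the critical point on $(0,\infty)$, whereas I need uniqueness of the maximiser on the closed half-line $[0,\infty)$. This upgrade rests on the strict positivity $J(u) > 0$, which in turn comes from Lemma \ref{preliminary-lemma-3}(3).
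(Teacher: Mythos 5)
Your proposal is correct and follows essentially the same route as the paper: the path is the ray $\tau\mapsto\tau u$, the strict inequality $J(\tau u)<J(u)$ for $\tau\neq1$ comes from the uniqueness of the maximiser supplied by Lemma \ref{preliminary-lemma-2} together with $u\in\mathcal{N}$, and the interval $[0,1]$ is obtained by a change of variable. You are merely more explicit than the paper on two points it leaves implicit, namely the concrete piecewise-linear reparametrisation and the use of $J(u)\geq c_g>0$ to handle the endpoint $\tau=0$.
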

\begin{proof}
The proof follows closely the arguments for the local problem developed
by Jeanjean and Tanaka \cite[lemma 2.1]{Jeanjean2003}. We define the path $\hat{\gamma}: [0, \infty)\rightarrow H^1(\mathbb{R}^3)$ by
\begin{equation} \nonumber
\hat{\gamma}(\tau)(x)=\left\{
\begin{aligned}
&\tau u(x), &\tau>0,\\
&0, &\tau=0.
\end{aligned}
\right.
\end{equation}
It is obvious the function $\hat{\gamma}$ is continuous on $(0, \infty)$. By the definiton of $\hat{\gamma}$, we get that
\begin{align*}
\lim\limits_{\tau\rightarrow0^+}\left(\|\nabla\hat{\gamma}(\tau)\|^2_{L^2}+\omega\|\hat{\gamma}(\tau)\|^2_{L^2}\right)
=\lim\limits_{\tau\rightarrow0^+}\tau^2\left(\|\nabla u\|^2_{L^2}+\omega\|u\|^2_{L^2}\right)=0,
\end{align*}
which implies that $\hat{\gamma}$ is continuous at $0$.
Since $u\in H^1(\mathbb{R}^3)\setminus\{0\}$ satisfies Eq.(\ref{main2}), we have $\langle J'(u), u\rangle=0$. Combining this with (\ref{proof-theorem2-6}) and for every $\tau>0$, we obtain
\begin{align*}
J(\hat{\gamma}(\tau))=\left(\frac{\tau^2}2-\frac{\tau^p}p\right)\left(\|\nabla u\|^2_{L^2}+\omega\|u\|^2_{L^2}\right)-\left(\frac{\tau^4}4-\frac{\tau^p}p\right)\|u\|^p_{L^p}.
\end{align*}
By direct calculation, we have
$$\frac{dJ(\hat{\gamma}(\tau))}{d\tau}\Big|_{\tau=1}=0,$$
that is, $\tau=1$ is a critical point of $J(\hat{\gamma}(\tau))$.

In view of Lemma \ref{preliminary-lemma-2}, we know that $J(\hat{\gamma}(\tau))$ has a unique global maximum at $\tau=1$. Then for every $\tau\in[0, \infty)\setminus\{1\}$, $J(\hat{\gamma}(\tau))<J(u)$. Since
$$\lim\limits_{\tau\rightarrow\infty}J(\hat{\gamma}(\tau))=-\infty,$$
the path $\gamma$ can be defined by a suitable change of variable. The proof is thus finished.
\end{proof}

We now have all the tools available to show that the mountain-pass critical level $c$ defined in (\ref{mountain-pass}) coincides with the ground state energy level $c_g$ defined in (\ref{ground-state}), which completes the proof of Theorem \ref{theorem2}.

\begin{proof}[Proof of Theorem \ref{theorem2}]
In view of Lemma \ref{Cerami-sequence1} and the proof of the Step 1 of Theorem \ref{theorem1}, we obtain a Cerami sequence $\{u_n\}^{\infty}_{n=1}$ in $H^1(\mathbb{R}^3)$ at the mountain-pass level $c>0$, which converges weakly to a solution $u\in H^1(\mathbb{R}^3)\setminus\{0\}$ of Eq.(\ref{main2}). Since $\lim\limits_{n\rightarrow\infty}\langle J'(u_n), u_n\rangle=0$, by the weak convergence of the sequence $\{u_n\}^{\infty}_{n=1}$, the weak lower-semicontinuity of
the norm and the Fatou's lemma, we have
\begin{align*}
J(u)&=J(u)-\frac12\langle J'(u), u\rangle\\
&=\frac{\mu}4\int\phi_u|u|^2dx+\left(\frac12-\frac1p\right)\|u\|^p_{L^p}\\
&\leq\liminf\limits_{n\rightarrow\infty}\left[\frac{\mu}4\int\phi_{u_n}|u_n|^2dx+\left(\frac12-\frac1p\right)\|u_n\|^p_{L^p}\right]\\
&=\liminf\limits_{n\rightarrow\infty}\left[J(u_n)-\frac12\langle J'(u_n), u_n\rangle\right]\\
&=c.
\end{align*}
Since $u$ is a nontrivial solution of Eq.(\ref{main2}), we have $J(u)\geq c_g$ by definition of the ground state energy level $c_g$, and hence $c_g\leq c$.

Let $v\in H^1(\mathbb{R}^3)\setminus\{0\}$ be any solution of Eq.(\ref{main2}). It follows Lemma \ref{preliminary-lemma-5} and (\ref{mountain-pass}) that $c\leq J(v)$ and then $c_g\geq c$. We have thus proved that $J(u)=c_g=c$, for $p\in[4, 6]$. Finally, by Lemma \ref{preliminary-lemma-4}, we obtain $J(u)=\inf\limits_{v\in\mathcal{N}}J(v)=\inf\limits_{v\in H^1(\mathbb{R}^3)\setminus\{0\}}\max\limits_{\tau>0}J(\tau v)>0$, and this concludes the proof.
\end{proof}

In order to obtain qualitative properties of the ground state solutions of Eq.(\ref{main2}), first, we study the positivity of the ground state solutions.

\begin{lemma}\label{property-positive}
Let $p\in[4, 6]$ and satisfy the conditions in Theorem \ref{theorem1}. If $u\in H^1(\mathbb{R}^3)$ is a ground state solution of Eq.(\ref{main2}), then $|u|\in\mathcal{N}$, $J(|u|)=c_g$ and $\|\nabla|u|\|_{L^2}=\|\nabla u\|_{L^2}$. Moreover, $|u|>0$ in $\mathbb{R}^3$.
\end{lemma}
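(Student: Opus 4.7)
The plan is to exploit the fact that $\phi_u$ depends only on $|u|^2$, combined with the Kato (diamagnetic) inequality $|\nabla|u||\le|\nabla u|$ a.e.\ for complex-valued $u\in H^1(\mathbb{R}^3)$. Since $\phi_{|u|}=\phi_u$, every term in $J$ apart from the kinetic one is unchanged when $u$ is replaced by $|u|$; thus
\begin{align*}
J(|u|)=J(u)+\tfrac12\bigl(\|\nabla|u|\|_{L^2}^2-\|\nabla u\|_{L^2}^2\bigr)\le J(u)=c_g,
\end{align*}
and similarly $\langle J'(|u|),|u|\rangle\le\langle J'(u),u\rangle=0$. By Lemma \ref{preliminary-lemma-2} there is a unique $\tau_{|u|}>0$ with $\tau_{|u|}|u|\in\mathcal{N}$, and a short fibering analysis of $\tau\mapsto\langle J'(\tau|u|),\tau|u|\rangle$ together with the sign $\langle J'(|u|),|u|\rangle\le 0$ forces $\tau_{|u|}\in(0,1]$.

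The rigidity step comes next. Set $g_1(\tau):=J(\tau u)$ and $g_2(\tau):=J(\tau|u|)$. A direct computation gives
\begin{align*}
g_1(\tau)-g_2(\tau)=\tfrac{\tau^2}{2}\bigl(\|\nabla u\|_{L^2}^2-\|\nabla|u|\|_{L^2}^2\bigr)\ge 0,\qquad\forall\,\tau>0.
\end{align*}
Since $u\in\mathcal{N}$, Corollary \ref{preliminary-corollary} yields $\max_{\tau>0}g_1(\tau)=J(u)=c_g$, while $\tau_{|u|}|u|\in\mathcal{N}$ together with the definition of $c_g$ gives $\max_{\tau>0}g_2(\tau)=J(\tau_{|u|}|u|)\ge c_g$. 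Sandwiching,
\begin{align*}
c_g\le\max_{\tau>0}g_2(\tau)\le\max_{\tau>0}g_1(\tau)=c_g,
\end{align*}
so all quantities equal $c_g$ and the non-negative difference $g_1-g_2$ is forced to vanish at these maxima. Since $g_1-g_2$ has the form $\tfrac{\tau^2}{2}A$ for a constant $A\ge 0$, vanishing at any $\tau>0$ forces $A=0$, i.e.\ $\|\nabla|u|\|_{L^2}=\|\nabla u\|_{L^2}$. Then $g_1\equiv g_2$, the unique maximizer is $\tau_{|u|}=1$, so $|u|\in\mathcal{N}$ and $J(|u|)=c_g$.

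Having identified $|u|$ as a minimizer of $J$ on $\mathcal{N}$ and invoking the natural-constraint property (Lemma \ref{preliminary-lemma-3}(2)), $|u|$ is a real-valued critical point of $J$ on $H^1(\mathbb{R}^3)$, hence a weak solution of Eq.(\ref{main2}). Rearranging,
\begin{align*}
-\Delta|u|+\omega|u|=\mu\phi_{|u|}|u|+|u|^{p-1}\ge 0,
\end{align*}
where the non-negativity uses $\phi_{|u|}\ge 0$ (Lemma \ref{property2}(2)), $\mu,\omega>0$ and $|u|\ge 0$. Theorem \ref{theorem1} gives $|u|\in W^{2,s}(\mathbb{R}^3)$ for every $s>1$, so $|u|$ is continuous, and it is nontrivial since $\mathcal{N}$ is bounded away from $0$ (Lemma \ref{preliminary-lemma-3}(1)). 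The strong maximum principle applied to the coercive operator $-\Delta+\omega$ with $\omega>0$ then yields $|u|>0$ throughout $\mathbb{R}^3$. The main obstacle is the rigidity step — extracting the equality of gradient norms from the squeeze on the two maxima; everything else is a routine application of the Kato inequality, the variational characterization via Corollary \ref{preliminary-corollary}, and a standard maximum-principle argument.
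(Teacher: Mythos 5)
Your proposal is correct, and the overall strategy coincides with the paper's: diamagnetic (Kato) inequality plus $\phi_{|u|}=\phi_u$, projection of $|u|$ onto $\mathcal{N}$ via Lemma \ref{preliminary-lemma-2}, minimality of $c_g$, the natural-constraint property of Lemma \ref{preliminary-lemma-3}(2), and the strong maximum principle. Where you diverge is in the rigidity step. The paper uses $\langle J'(|u|),|u|\rangle\le 0$ to place $\tau_{|u|}\in(0,1]$ and then evaluates the quantity $J(v)-\tfrac12\langle J'(v),v\rangle=\tfrac{\mu}4\int\phi_v|v|^2dx+\tfrac{p-2}{2p}\|v\|^p_{L^p}$ at $v=\tau_{|u|}|u|$ and at $v=u$; since this expression contains no gradient term and is strictly increasing in $\tau_{|u|}$, the inequality $J(\tau_{|u|}|u|)\le c_g$ combined with $J(\tau_{|u|}|u|)\ge c_g$ forces $\tau_{|u|}=1$, and the gradient-norm equality then falls out of $\langle J'(|u|),|u|\rangle=\langle J'(u),u\rangle=0$. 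You instead compare the two fibering maps $g_1(\tau)=J(\tau u)$ and $g_2(\tau)=J(\tau|u|)$, whose difference is the single term $\tfrac{\tau^2}2\bigl(\|\nabla u\|^2_{L^2}-\|\nabla|u|\|^2_{L^2}\bigr)\ge 0$, and squeeze $c_g=g_2(\tau_{|u|})\le g_1(\tau_{|u|})\le\max_\tau g_1=c_g$ (the first equality from $\tau_{|u|}|u|\in\mathcal{N}$, the last from Corollary \ref{preliminary-corollary}); this gives the gradient equality first and $\tau_{|u|}=1$ as a consequence, and it does not actually need the preliminary observation that $\tau_{|u|}\le 1$. Both routes are equally rigorous; yours isolates the gradient discrepancy as the sole obstruction at the outset, while the paper's eliminates the gradient term algebraically. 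The remainder of your argument (Lagrange multiplier to pass from constrained minimizer to free critical point, continuity from the regularity in Theorem \ref{theorem1}, and the maximum principle for $-\Delta+\omega$) matches the paper.
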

\begin{proof}
It follows from $\|\nabla|u| \|^2_{L^2}\leq\|\nabla u\|^2_{L^2}$ that $\langle J'(|u|), |u|\rangle\leq0$. By the continuity of $J(\tau|u|)$, there exists $\tau_{|u|}\in(0, 1]$ satisfying $J'(\tau_{|u|}|u|)=0$, i.e. $\tau_{|u|}|u|\in\mathcal{N}$. Moreover, Lemma \ref{preliminary-lemma-2} guarantees the uniqueness of $\tau_{|u|}$. Thus,
\begin{align*}
J(\tau_{|u|}|u|)&=J(\tau_{|u|}|u|)-\frac12\langle J'(\tau_{|u|}|u|), \tau_{|u|}|u|\rangle\\
&=\frac{\mu}4\tau_{|u|}^4\int\phi_{|u|}|u|^2dx+\frac{p-2}{2p}\tau_{|u|}^p\|u\|^p_{L^p}\\
&\leq\frac{\mu}4\int\phi_u|u|^2dx+\frac{p-2}{2p}\|u\|^p_{L^p}\\
&=J(u)-\frac12\langle J'(u), u\rangle=c_g.
\end{align*}
By the definition of $c_g$, we have $\tau_{|u|}=1$. Therefore, $|u|\in\mathcal{N}$, $J(|u|)=c_g$ and $\|\nabla|u|\|_{L^2}=\|\nabla u\|_{L^2}$. Then, $|u|$ satisfies the equation
\begin{align*}
-\Delta|u|+\omega|u|-\mu\phi_{|u|}|u|=|u|^{p-1}.
\end{align*}
Since $|u|$ is continuous by Theorem \ref{theorem1}, by the strong maximum principle we conclude that $|u|>0$ in $\mathbb{R}^3$.
\end{proof}

Next, we prove that any ground state solutions of Eq.(\ref{main2}) are radial. We follow the arguments of \cite{Moroz2013}, which relies on polarizations. So we first recall some elements of the theory of polarizations (see in \cite{Brock2000, Moroz2015, Schaftingen2008}).

Assume that $H\subset\mathbb{R}^3$ is a closed half-space and that $\sigma_H$ is the reflection with respect to $\partial H$. The polarization $u^H: \mathbb{R}^3\rightarrow\mathbb{R}$ of $u: \mathbb{R}^3\rightarrow\mathbb{R}$ is defined for $x\in\mathbb{R}^3$ by
\begin{equation} \nonumber
u^H(x)=\left\{
\begin{aligned}
&\max\left\{u(x), u\left(\sigma_H(x)\right)\right\}, &\hbox{if}~~ x\in H,\\
&\min\left\{u(x), u\left(\sigma_H(x)\right)\right\}, &\hbox{if}~~ x\notin H.
\end{aligned}
\right.
\end{equation}

We will use the following standard property of polarizations (see in \cite[Lemma 5.3]{Brock2000}).

\begin{lemma}[Polarization and Dirichlet integrals]\label{property-radial-1}
If $u\in H^1(\mathbb{R}^3)$, then $u^H\in H^1(\mathbb{R}^3)$ and
$$\int|\nabla u^H|^2dx=\int|\nabla u|^2dx.$$
\end{lemma}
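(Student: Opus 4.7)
The plan is to reduce everything to pointwise manipulations of $u$ and its reflection, and then exploit the fact that $\sigma_H$ is an affine isometry of $\mathbb{R}^3$. First I would introduce the companion function $v(x) := u(\sigma_H(x))$. Since $\sigma_H$ has unit Jacobian, a change of variables shows $v \in H^1(\mathbb{R}^3)$ with $|\nabla v(x)| = |\nabla u(\sigma_H(x))|$ and $\|v\|_{H^1} = \|u\|_{H^1}$.

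Next I would establish that $u^H \in H^1(\mathbb{R}^3)$. The key identity, verified pointwise from the definition of polarization and the elementary relation $\max(a,b) - \min(a,b) = |a-b|$, is
\[
u^H = \min(u,v) + |u-v|\,\chi_H.
\]
Both $\min(u,v)$ and $|u-v|$ belong to $H^1(\mathbb{R}^3)$ by the standard lattice properties of Sobolev functions. Because $\sigma_H$ fixes $\partial H$ pointwise, one has $u = v$ on $\partial H$ in the trace sense, so the trace of $|u-v|$ on $\partial H$ vanishes, and extending $|u-v|\,\chi_H$ by zero across $\partial H$ preserves weak differentiability. Hence $u^H \in H^1(\mathbb{R}^3)$, with weak gradient
\[
\nabla u^H =
\begin{cases}
\nabla u & \text{on } \{u \geq v\} \cap H,\\
\nabla v & \text{on } \{u < v\} \cap H,\\
\nabla v & \text{on } \{u \geq v\} \cap H^c,\\
\nabla u & \text{on } \{u < v\} \cap H^c,
\end{cases}
\]
which follows from the classical formulas for $\nabla\max$ and $\nabla\min$ applied on $H$ and $H^c$ respectively.

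Finally, set $E := \{u > v\} \cap H$ and $F := \{u < v\} \cap H$. A direct verification shows that $\sigma_H(E) \subset \{u < v\} \cap H^c$ and $\sigma_H(F) \subset \{u > v\} \cap H^c$, and these two sets partition $H^c$ up to a null set. The change of variables $y = \sigma_H(x)$ gives
\[
\int_F |\nabla v(x)|^2\,dx = \int_{\sigma_H(F)} |\nabla u(y)|^2\,dy,\qquad \int_{\sigma_H(F)} |\nabla v(x)|^2\,dx = \int_F |\nabla u(y)|^2\,dy,
\]
and an identical pair of identities holds for $E$ in place of $F$. Summing the four contributions coming from $E$, $F$, $\sigma_H(E)$, $\sigma_H(F)$ yields $\int |\nabla u^H|^2\,dx = \int |\nabla u|^2\,dx$.

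The main obstacle is the $H^1$-regularity of $u^H$: the piecewise definition involves multiplication by the characteristic function $\chi_H$, which in general would destroy weak differentiability and create a surface Dirac term in the distributional gradient. The decisive point is the matching condition $u = v$ on $\partial H$, which forces the two halves $\max(u,v)|_H$ and $\min(u,v)|_{H^c}$ to paste together across $\partial H$ without a trace jump. Once this regularity is secured, the identity for the Dirichlet integral reduces to bookkeeping using the unit Jacobian of $\sigma_H$.
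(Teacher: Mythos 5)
The paper offers no proof of this lemma at all --- it is quoted verbatim from Brock and Solynin (Lemma 5.3 of \emph{Trans. Amer. Math. Soc.} 352 (2000)) --- so any self-contained argument is necessarily a different route, and yours is essentially the standard one and is correct in substance. The identity $u^H=\min(u,v)+|u-v|\chi_H$ with $v=u\circ\sigma_H$ is verified pointwise; the lattice properties of $H^1$ together with the vanishing of the trace of $|u-v|$ on $\partial H$ (valid because $\sigma_H$ fixes $\partial H$ pointwise, so $|u-v|\big|_H\in H^1_0(H)$ and its extension by zero remains in $H^1(\mathbb{R}^3)$) give $u^H\in H^1(\mathbb{R}^3)$; and the Dirichlet integral is recovered by folding $H^c$ onto $H$ through the measure-preserving isometry $\sigma_H$. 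One statement needs correcting: $\sigma_H(E)$ and $\sigma_H(F)$ do \emph{not} in general partition $H^c$ up to a null set, because the coincidence set $\{u=v\}$ can have positive measure (for instance when $u$ is symmetric with respect to $\partial H$, so that $u\equiv v$). This does not damage the argument, since on $\{u=v\}$ one has $\nabla u=\nabla v$ almost everywhere (the standard Stampacchia property of Sobolev functions), so the contribution of $\{u=v\}\cap H^c$ to $\int|\nabla u^H|^2\,dx$ equals its contribution to $\int|\nabla u|^2\,dx$ directly; you should record this and sum over the five pieces $E$, $F$, $\sigma_H(E)$, $\sigma_H(F)$ and $\{u=v\}$ rather than four. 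With that small repair the proof is complete, and it has the advantage over the paper's bare citation of making explicit exactly which properties of $\sigma_H$ (unit Jacobian, pointwise fixing of $\partial H$) the conclusion rests on.
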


We shall also use a polarization inequality with equality cases.

\begin{lemma}[Polarization and nonlocal integrals]\label{property-radial-2}
Let $u\in L^{\frac65}(\mathbb{R}^3)$ and $H\subset\mathbb{R}^3$ be a closed half-space. If $u\geq0$, then
\begin{align*}
\iint\frac{1-e^{-\frac{|x-y|}a}}{|x-y|}u(x)u(y)dxdy\leq\iint\frac{1-e^{-\frac{|x-y|}a}}{|x-y|}u^H(x)u^H(y)dxdy,
\end{align*}
with equality if and only if either $u^H=u$ or $u^H=u\circ\sigma_H$.
\end{lemma}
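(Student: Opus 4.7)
The plan is to follow the standard polarization technique (as in Brock-Solynin \cite{Brock2000} and Van Schaftingen \cite{Schaftingen2008}), reducing the integral inequality to a pointwise four-point rearrangement inequality on $H\times H$. I would first decompose $\mathbb{R}^3\times\mathbb{R}^3$ into the four pieces $H\times H$, $H\times H^c$, $H^c\times H$, $H^c\times H^c$, and change variables by $\sigma_H$ on each piece involving $H^c$. Writing $\mathcal{K}(r):=(1-e^{-r/a})/r$, both sides of the inequality take the common form
\begin{align*}
&\iint_{H\times H}\mathcal{K}(|x-y|)\bigl[v(x)v(y)+v(\sigma_H(x))v(\sigma_H(y))\bigr]\,dx\,dy \\
&\qquad+\iint_{H\times H}\mathcal{K}(|x-\sigma_H(y)|)\bigl[v(x)v(\sigma_H(y))+v(\sigma_H(x))v(y)\bigr]\,dx\,dy,
\end{align*}
with $v=u$ on the left and $v=u^H$ on the right.

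Next, a short computation gives $\mathcal{K}'(r)=r^{-2}\bigl(e^{-r/a}(1+r/a)-1\bigr)<0$ for $r>0$ (the numerator vanishes at $r=0$ and has derivative $-re^{-r/a}/a^{2}<0$), so $\mathcal{K}$ is strictly decreasing. Combined with the geometric fact that $|x-y|<|x-\sigma_H(y)|$ for $x,y$ in the interior of $H$, this yields $\mathcal{K}(|x-y|)>\mathcal{K}(|x-\sigma_H(y)|)$ for almost every $(x,y)\in H\times H$. The main inequality then follows from the elementary four-point estimate
\begin{align*}
\kappa_1(\alpha\gamma+\beta\delta)+\kappa_2(\alpha\delta+\beta\gamma)\leq\kappa_1(AC+BD)+\kappa_2(AD+BC),
\end{align*}
valid for $\alpha,\beta,\gamma,\delta\geq 0$ and weights $\kappa_1>\kappa_2\geq 0$, where $A=\max(\alpha,\beta)$, $B=\min(\alpha,\beta)$, $C=\max(\gamma,\delta)$, $D=\min(\gamma,\delta)$. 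Since the identity $(\alpha+\beta)(\gamma+\delta)=(A+B)(C+D)$ reduces this estimate to $AC+BD\geq\alpha\gamma+\beta\delta$, a short case check on the signs of $\alpha-\beta$ and $\gamma-\delta$ confirms it, with equality iff $\alpha=\beta$, $\gamma=\delta$, or the two pairs are ordered the same way. Applying this pointwise with $(\alpha,\beta,\gamma,\delta)=(u(x),u(\sigma_H(x)),u(y),u(\sigma_H(y)))$ and $(\kappa_1,\kappa_2)=(\mathcal{K}(|x-y|),\mathcal{K}(|x-\sigma_H(y)|))$ and integrating over $H\times H$ produces the claimed inequality.

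For the equality case, the strict inequality $\kappa_1>\kappa_2$ a.e.\ forces equality in the pointwise four-point inequality for almost every $(x,y)\in H\times H$, which means that the orderings of the pairs $(u(x),u(\sigma_H(x)))$ and $(u(y),u(\sigma_H(y)))$ agree for a.e.\ $(x,y)$. The main obstacle, and the only nonroutine step, is extracting from this simultaneous-ordering condition a global dichotomy. I would argue by setting
\begin{align*}
H_+:=\{x\in H:\ u(x)>u(\sigma_H(x))\},\qquad H_-:=\{x\in H:\ u(x)<u(\sigma_H(x))\},
\end{align*}
and observing that if both $H_+$ and $H_-$ had positive Lebesgue measure, then choosing $x\in H_+$ and $y\in H_-$ in a set of positive product measure would violate the agreement of orderings. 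Hence exactly one of $H_+$, $H_-$ is a null set, which translates by the definition of polarization into $u^H=u$ or $u^H=u\circ\sigma_H$ almost everywhere, respectively. This completes the proposed proof.
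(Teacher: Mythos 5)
Your proof is correct and follows the same route the paper relies on: the paper omits the argument entirely, deferring to \cite[Lemma 5.3]{Moroz2013}, whose proof is exactly your reduction to the four-point rearrangement inequality on $H\times H$ combined with the strict monotonicity of the kernel $r\mapsto(1-e^{-r/a})/r$ and the geometric fact $|x-y|<|x-\sigma_H(y)|$ for $x,y$ in the interior of $H$. Your treatment of the equality case via the sets $H_+$ and $H_-$ and Fubini is also the standard one and is carried out correctly.
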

\begin{proof}
The proof of Lemma \ref{property-radial-2} is similar to the proof of \cite[Lemma 5.3]{Moroz2013}, and it will be omitted.
\end{proof}

The last tool that we need is a characterization of symmetric functions by polarizations (see in \cite[Lemma 5.4]{Moroz2013} or \cite[Proposition 3.15]{Schaftingen2008}).

\begin{lemma}[Symmetry and polarization]\label{property-radial-3}
Assume that $u\in L^2(\mathbb{R}^3)$ is nonnegative. There exist $x_0\in\mathbb{R}^3$ and a nonincreasing function $v: (0, \infty)\rightarrow\mathbb{R}$ such that for almost every $x\in\mathbb{R}^3$, $u(x)=v(|x-x_0|)$ if and only if for every closed half-space $H\subset\mathbb{R}^3$, $u^H=u$ or $u^H=u\circ\sigma_H$.
\end{lemma}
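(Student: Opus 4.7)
The ``only if'' direction is a direct geometric verification: for any closed half-space $H$ and any $x \in H$, one has $|x - x_0| \leq |\sigma_H(x) - x_0|$ if and only if $x_0 \in H$. Hence if $u(\cdot) = v(|\cdot - x_0|)$ with $v$ nonincreasing, the definition of polarization immediately yields $u^H = u$ when $x_0 \in H$, $u^H = u \circ \sigma_H$ when $x_0 \notin H$, and both identities hold in the borderline case $x_0 \in \partial H$.

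For the nontrivial converse, my plan is to invoke the Brock--Solynin approximation theorem: for every nonnegative $u \in L^2(\mathbb{R}^3)$ there exists a sequence $\{H_n\}_{n \geq 1}$ of closed half-spaces such that the iterated polarization $u^{H_1 H_2 \cdots H_n}$ converges in $L^2(\mathbb{R}^3)$ to the Schwarz symmetric decreasing rearrangement $u^\ast$ centered at the origin. The key observation is that the hypothesis on $u$ propagates through iterated polarizations: using the identity $(u \circ \tau)^H = u^{\tau(H)} \circ \tau$, valid for any isometry $\tau$ of $\mathbb{R}^3$, a short induction on $n$ shows that at each step one has $u^{H_1 \cdots H_n} = u \circ \tau_n$ for some Euclidean isometry $\tau_n$ (a composition of at most $n$ reflections with the identity). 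Consequently $u \circ \tau_n \to u^\ast$ in $L^2(\mathbb{R}^3)$.

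The final step is to promote this $L^2$-convergence to an isometric identification of $u$ with a radial nonincreasing profile. If $u \equiv 0$ the statement is trivial, so assume $u^\ast \not\equiv 0$. Writing $\tau_n = T_{a_n} \circ R_n$ with $R_n \in O(3)$ and $a_n \in \mathbb{R}^3$, the strong $L^2$-convergence together with the tightness of $u^\ast$ forces the sequence $\{a_n\}$ to remain in a bounded region of $\mathbb{R}^3$: otherwise the $L^2$-mass of $u \circ \tau_n$ over any fixed ball would tend to zero along a subsequence, contradicting the non-triviality of $u^\ast$ on that ball. Since $O(3)$ is compact, a subsequence of $\tau_n$ converges to some isometry $\tau_\infty$, and passing to the limit gives $u \circ \tau_\infty = u^\ast$ almost everywhere. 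Setting $x_0 := \tau_\infty^{-1}(0)$ and defining $v(s) := u^\ast(se)$ for an arbitrary unit vector $e$ yields the required nonincreasing radial profile with $u(x) = v(|x - x_0|)$ almost everywhere.

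I expect the main obstacle to be the compactness argument for the sequence $\{\tau_n\}$: because the Euclidean group is noncompact, one must rule out escape of the translation parts to infinity by combining the $L^2$-tightness of the limit $u^\ast$ with the $L^2$-decay of $u$. Once this compactness is secured, the rest of the argument is essentially bookkeeping --- extracting a convergent subsequence in $\mathbb{R}^3 \times O(3)$ and unpacking the definition of $u^\ast$ to read off $x_0$ and $v$.
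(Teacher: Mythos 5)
Your argument is correct in substance, but note that the paper does not prove this lemma at all: it is quoted verbatim from the literature (Lemma 5.4 in Moroz--Van Schaftingen and Proposition 3.15 in Van Schaftingen--Willem, both cited in the text), so there is no in-paper proof to compare against. What you have written is essentially a reconstruction of the standard proof from those references. The ``only if'' direction is a correct elementary verification, including the boundary case $x_0\in\partial H$. For the converse, the three ingredients you identify are the right ones and each checks out: (i) the approximation of the Schwarz symmetrization by iterated polarizations (the per-function version is in Brock--Solynin, already reference \cite{Brock2000} of the paper; the universal-sequence version is due to Van Schaftingen); (ii) the conjugation identity $(u\circ\tau)^H=u^{\tau(H)}\circ\tau$, which follows from $\sigma_{\tau(H)}=\tau\circ\sigma_H\circ\tau^{-1}$ and which, by induction, turns the hypothesis into $u^{H_1\cdots H_n}=u\circ\tau_n$ --- this is also precisely where the hypothesis for \emph{every} closed half-space (not merely those containing the origin) is needed, since $\tau_1(H_2),\dots$ are arbitrary; and (iii) the compactness argument in the Euclidean group, where the escape of the translation parts is correctly excluded by comparing the $L^2$-mass of $u\circ\tau_n$ on a fixed ball (which tends to $0$ if $|a_n|\to\infty$, since $u\in L^2$) with that of $u^*\not\equiv0$. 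Two minor corrections: the limit $\tau_\infty$ should be taken along a subsequence using continuity of $\tau\mapsto u\circ\tau$ in $L^2$ (which you implicitly use and which is standard), and the center should be $x_0=\tau_\infty(0)$ rather than $\tau_\infty^{-1}(0)$, since $u(\tau_\infty(y))=u^*(y)=v(|y|)$ gives $u(x)=v(|\tau_\infty^{-1}(x)|)=v(|x-\tau_\infty(0)|)$. Neither affects the validity of the approach.
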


Now, we are ready to prove the radial symmetry result.

\begin{lemma}\label{property-radial-4}
Let $p\in[4, 6]$ and satisfy the conditions in Theorem \ref{theorem1}. If $u$ be a positive ground state solution of Eq.(\ref{main2}), then there exists $x_0\in\mathbb{R}^3$ and a nonincreasing positive function $v: (0, \infty)\rightarrow\mathbb{R}$ such that for almost every $x\in\mathbb{R}^3$, $u(x)=v(|x-x_0|)$.
\end{lemma}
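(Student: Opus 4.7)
The plan is to apply the polarization technique of Moroz and Van Schaftingen: by Lemma \ref{property-radial-3}, it suffices to prove that for every closed half-space $H\subset\mathbb{R}^3$ either $u^H=u$ or $u^H=u\circ\sigma_H$, where $u^H$ denotes the polarization of $u$ with respect to $H$. Once this is established, Lemma \ref{property-radial-3} produces $x_0$ and a nonincreasing $v$ with $u(x)=v(|x-x_0|)$ a.e., and $v>0$ is inherited from the hypothesis $u>0$.

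Fix such an $H$. Since $u>0$, Lemma \ref{property-radial-1} gives $\|\nabla u^H\|_{L^2}=\|\nabla u\|_{L^2}$, and the standard pointwise-rearrangement properties of polarization yield $\|u^H\|_{L^q}=\|u\|_{L^q}$ for every $q\in[1,\infty]$. Since $|u|^2\in L^{6/5}(\mathbb{R}^3)$ and $(|u|^2)^H=|u^H|^2$ (because $t\mapsto t^2$ is monotone on $[0,\infty)$), Lemma \ref{property-radial-2} applies and gives
$$\int \phi_u|u|^2\,dx\leq\int \phi_{u^H}|u^H|^2\,dx,$$
with equality precisely when $u^H=u$ or $u^H=u\circ\sigma_H$. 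Because the nonlocal term enters $J$ with a negative coefficient, this translates into the pointwise inequality $J(\tau u^H)\leq J(\tau u)$ for every $\tau>0$.

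By Lemma \ref{preliminary-lemma-2} there exists a unique $\tau_H>0$ with $\tau_H u^H\in\mathcal{N}$, and Corollary \ref{preliminary-corollary} identifies $J(\tau_H u^H)=\max_{\tau>0}J(\tau u^H)$. Combining this with the above pointwise comparison and with $u\in\mathcal{N}$,
$$c_g\leq J(\tau_H u^H)=\max_{\tau>0}J(\tau u^H)\leq\max_{\tau>0}J(\tau u)=J(u)=c_g,$$
so in fact $J(\tau_H u^H)=J(\tau_H u)=J(u)=c_g$. The uniqueness of the maximizer of $\tau\mapsto J(\tau u)$ supplied by Lemma \ref{preliminary-lemma-2} then forces $\tau_H=1$, so $u^H\in\mathcal{N}$ and $J(u^H)=c_g$.

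With $\|\nabla u^H\|_{L^2}$, $\|u^H\|_{L^2}$ and $\|u^H\|_{L^p}$ unchanged, the identity $J(u^H)=J(u)$ reduces to $\int\phi_{u^H}|u^H|^2\,dx=\int\phi_u|u|^2\,dx$, which is the equality case in Lemma \ref{property-radial-2}. Hence $u^H=u$ or $u^H=u\circ\sigma_H$, and Lemma \ref{property-radial-3} finishes the argument. The main delicate point is precisely the sign of the nonlocal term: for attractive Choquard-type problems the polarization inequality directly yields $J(u^H)\leq J(u)$, but in our repulsive setting it points the wrong way and has to be absorbed through the Nehari rescaling $\tau_H$, using that $\tau\mapsto J(\tau v)$ has a unique strict maximizer on any ray.
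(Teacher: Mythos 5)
Your argument is correct, and it reaches the key identity $J(u^H)=J(u)$ by a genuinely different mechanism than the paper. The paper follows the Moroz--Van Schaftingen path argument: it polarizes the whole path $\gamma$ supplied by Lemma \ref{preliminary-lemma-5}, notes $J(\gamma^H(t))\leq J(\gamma(t))<c_g$ for $t\neq\frac12$ while $\max_t J(\gamma^H(t))\geq c=c_g$, and concludes that the maximum of $J\circ\gamma^H$ must sit at $t=\frac12$, forcing $J(u^H)=c_g$. You instead stay on the Nehari manifold: rescale $u^H$ by the unique $\tau_H$ with $\tau_H u^H\in\mathcal{N}$ (Lemma \ref{preliminary-lemma-2}), sandwich $c_g\leq J(\tau_H u^H)=\max_\tau J(\tau u^H)\leq\max_\tau J(\tau u)=J(u)=c_g$ using Corollary \ref{preliminary-corollary}, and use uniqueness of the maximizer of $\tau\mapsto J(\tau u)$ to get $\tau_H=1$. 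Your route avoids Lemma \ref{preliminary-lemma-5} and the identity $c=c_g$ altogether, needing only the Nehari characterization of $c_g$, which is available since $p\in[4,6]$; the paper's route is the one that would survive if the fibering map $\tau\mapsto J(\tau u)$ lost its unique-maximum structure (e.g.\ for $p<4$). From $J(u^H)=J(u)$ onward the two proofs coincide: preservation of the local terms isolates the nonlocal integrals, the equality case of Lemma \ref{property-radial-2} gives $u^H=u$ or $u^H=u\circ\sigma_H$, and Lemma \ref{property-radial-3} finishes. One small correction to your closing commentary: the polarization inequality does \emph{not} ``point the wrong way'' here --- since $\int\phi_u|u|^2\,dx$ increases under polarization and enters $J$ with the factor $-\frac{\mu}{4}$, you get $J(\tau u^H)\leq J(\tau u)$ directly, exactly as in the attractive Choquard setting (and exactly as your own displayed computation shows); the rescaling $\tau_H$ is needed only because $u^H$ need not lie on $\mathcal{N}$, not to reverse any inequality.
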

\begin{proof}
By Lemma \ref{property-positive}, we can assume that $u>0$. In view of Lemma \ref{preliminary-lemma-5}, there exists a path $\gamma\in C([0, 1], \mathbb{R}^3)$ such that $\gamma(\frac12)=u$ and $\gamma(t)\geq0$ for every $t\in[0, 1]$. For every half-space $H\subset\mathbb{R}^3$, we define the path $\gamma^H: [0, 1]\rightarrow H^1(\mathbb{R}^3)$ by $\gamma^H(t)=(\gamma(t))^H$. From Lemma \ref{property-radial-1} and $\|u^H\|_{L^r}=\|u\|_{L^r}$ with $r\in[1, \infty)$, we have $\gamma^H\in C([0, 1], H^1(\mathbb{R}^3))$. By Lemmas \ref{property-radial-1} and \ref{property-radial-2}, we obtain that $J(\gamma^H(t))\leq J(\gamma(t))$ for every $t\in[0, 1]$ and then $\gamma^H\in\Gamma$. Hence,
$$\max\limits_{t\in[0, 1]}J(\gamma^H(t))\geq c_g.$$
Since for every $t\in[0, 1]\setminus\{\frac12\}$,
$$J(\gamma^H(t))\leq J(\gamma(t))<J(u)=c_g,$$
we deduce that
\begin{align*}
J\left(\gamma^H(\frac12)\right)=J\left((\gamma(\frac12))^H\right)=J(u^H)=c_g.
\end{align*}
Therefore, $J(u^H)=J(u)$, which implies that
\begin{align*}
\int\phi_{u^H}|u^H|^2dx=\int\phi_u|u|^2dx.
\end{align*}
By Lemma \ref{property-radial-2}, we have $u^H=u$ or $u^H=u\circ\sigma_H$. Since this
holds for arbitrary $H$, we conclude by Lemma \ref{property-radial-3} that $u$ is radial and radially
decreasing.
\end{proof}

\begin{proof}[Proof of Theorem \ref{theorem4}]
(1) and (2) are the direct results of Lemmas \ref{property-positive} and \ref{property-radial-4}.

Next, we prove (3). We follow the arguments of Theorem 4.1 in \cite{Hajaiej2004}. Let $u\in\mathcal{N}$ be a ground state solution of Eq.(\ref{main2}), by Lemma \ref{property-positive}, for any $\theta\in\mathbb{R}$, we have
\begin{align*}
\langle J'(e^{i\theta}|u|), e^{i\theta}|u|\rangle=\langle J'(u), u\rangle=0,
\end{align*}
and
\begin{align*}
J(e^{i\theta}|u|)=J(|u|)=c_g.
\end{align*}
Thus, $e^{i\theta}|u|$ is also a ground state solution of Eq.(\ref{main2}). Next, we show $u=e^{i\theta}|u|$ for some $\theta\in\mathbb{R}$. Denote $u(x)=u_1(x)+iu_2(x)$. By Lemma \ref{property-positive}, we know $|u|=(u^2_1+u^2_2)^{\frac12}>0$ and $\|\nabla|u|\|_{L^2}=\|\nabla u\|_{L^2}$. Hence,
\begin{align}\label{proof-theorem2-7}
\int\sum\limits_{j=1}\limits^{3}\frac{(u_1\partial_ju_2-u_2\partial_ju_1)^2}{u_1^2+u_2^2}dx=0.
\end{align}
Since $u$ satisfies Eq.(\ref{main2}), it follows that $u_1$ and $u_2$ satisfy
\begin{equation}\label{proof-theorem2-8}
\left\{
\begin{aligned}
&-\Delta u_1+\omega u_1-\mu\phi_uu_1=|u|^{p-2}u_1, &x\in\mathbb{R}^3,\\
&-\Delta u_2+\omega u_2-\mu\phi_uu_2=|u|^{p-2}u_2, &x\in\mathbb{R}^3.
\end{aligned}
\right.
\end{equation}
Using the elliptic regularity theory, we deduce that $u_1, u_2\in C(\mathbb{R}^3)$. Indeed, since $|u|$ ia a solution to Eq.(\ref{main2}) by Lemma \ref{property-positive}, it follows from the proof of the Step 2 of Theorem \ref{theorem1} that $\phi_u|u|^2\in L^{\infty}(\mathbb{R}^3)$ and $|u|\in W^{2, s}_{loc}(\mathbb{R}^3)$ for any $s>1$. Hence, $|u|\in C(\mathbb{R}^3)$ and then $|u|\in L^{\infty}_{loc}$. From (\ref{proof-theorem2-8}), $u_1, u_2\in H^1(\mathbb{R}^3)$ are solutions to the equation
$$-\Delta w+\omega w=F$$
with
\begin{align*}
F:=\mu\phi_uw+|u|^{p-2}w=\mu\phi_u|u|\frac{w}{|u|}+|u|^{p-1}\frac{w}{|u|}\in L^{\infty}_{loc}(\mathbb{R}^3).
\end{align*}
Thus, $u_1, u_2\in C(\mathbb{R}^3)$ by Theorem 4.13 in \cite{Han1997}.

Let $\Omega:=\{x\in\mathbb{R}^3: u_2(x)=0\}$. The continuity of $u_2$ implies that $\Omega$ is closed. Suppose now that $x_0\in\Omega$, since $|u(x_0)|>0$, there exists an open ball $B$ with centre $x_0$ such that $u_1(x)\neq0$ for all $x\in B$. Therefore, for $x\in B$,
\begin{align*}
\frac{(u_1\partial_ju_2-u_2\partial_ju_1)^2}{u^2_1+u^2_2}=\left[\partial_j\left(\frac{u_2}{u_1}\right)\right]^2\frac{u_1^4}{u_1^2+u_2^2}, \quad j=1, 2, 3,
\end{align*}
which combined with (\ref{proof-theorem2-7}) gives that
\begin{align*}
\int_B\Big|\nabla\left(\frac{u_2}{u_1}\right)\Big|^2\frac{u_1^4}{u_1^2+u_2^2}dx=0.
\end{align*}
Thus, $\nabla\left(\frac{u_2}{u_1}\right)\equiv0$ on $B$ and then there exists a constant $C\in\mathbb{R}^3$ such that $\frac{u_2}{u_1}\equiv C$ on $B$. Since $x_0\in B$ and $u_2(x_0)=0$, we have that $C=0$, this means that $\Omega$ is also an open subset of $\mathbb{R}^3$. Hence, either $u_2\equiv0$ or $u_2\neq0$ for all $x\in\mathbb{R}^3$.

If $u_2\equiv0$ on $\mathbb{R}^3$, then $|u|=|u_1|>0$ on $\mathbb{R}^3$ and so $u(x)=u_1(x)=e^{i\theta}|u|$, where $\theta=0$ if $u_1>0$ and $\theta=\pi$ if $u_1<0$. If $u_2\neq0$ for all $x\in\mathbb{R}^3$. Then,
\begin{align*}
\frac{(u_1\partial_ju_2-u_2\partial_ju_1)^2}{u^2_1+u^2_2}=\left[\partial_j\left(\frac{u_1}{u_2}\right)\right]^2\frac{u_2^4}{u_1^2+u_2^2}, \quad j=1, 2, 3,
\end{align*}
for all $x\in\mathbb{R}^3$ and it follows from (\ref{proof-theorem2-7}) that $\nabla\left(\frac{u_1}{u_2}\right)\equiv0$ on $\mathbb{R}^3$. Thus, there exists a constant $C\in\mathbb{R}$ such that $u_1\equiv Cu_2$ on $\mathbb{R}^3$. Therefore, in complex notation, $u=(C+i)u_2$ and $|u|=|C+i||u_2|$. Let $\beta\in\mathbb{R}$ be such that $C+i=|C+i|e^{i\beta}$ and let $\tilde{\beta}=0$ if $u_2>0$ and $\tilde{\beta}=\pi$ if $u_2<0$ on $\mathbb{R}^3$. Setting $\theta=\beta+\tilde{\beta}$, we have
$$u=(C+i)u_2=|C+i|e^{i\beta}e^{i\tilde{\beta}}|u_2|=e^{i\theta}|u|.$$
This proves (3).

Finally, we show (4). By Lemmas \ref{property-positive} and \ref{property-radial-4}, we know that there exists $x_0\in\mathbb{R}^3$ and a nonincreasing positive function $v: (0, \infty)\rightarrow\mathbb{R}$ such that $|u(x)|=v(|x-x_0|)$ for almost every $x\in\mathbb{R}^3$. Hence, $w:=|u(x+x_0)|$ is a positive and radial nonincreasing solution to Eq.(\ref{main2}), that is, $w$ satisfies the following equation:
\begin{align}\label{proof-theorem2-22}
-\Delta w+\omega w=\mu\phi_ww+w^{p-1}.
\end{align}

In the following, we use the Moser iteration technique (see \cite{Gilbarg1983, Kang2024}). The proof is divided into three steps.\\
\textbf{Step 1.} There exists $C>0$ such that $\|w\|_{L^{\infty}}\leq C$.

Let $A_n=\{x\in\mathbb{R}^3: w(x)\leq n\}$, $B_n=\mathbb{R}^3\setminus A_n$, where $n\in\mathbb{N}$. For any $q>0$, define
\begin{equation} \nonumber
u_n=\left\{
\begin{aligned}
&w^{2q+1}, \quad &x\in A_n,\\
&n^{2q}w, \quad &x\in B_n.
\end{aligned}
\right.
\end{equation}
and
\begin{equation} \nonumber
v_n=\left\{
\begin{aligned}
&w^{q+1}, \quad &x\in A_n,\\
&n^{q}w, \quad &x\in B_n.
\end{aligned}
\right.
\end{equation}
Thus, $u_n, v_n\in H^1(\mathbb{R}^3)$ and $v^2_n=wu_n\leq w^{2(q+1)}$. By simple calculation, we have
\begin{equation} \nonumber
\nabla u_n=\left\{
\begin{aligned}
&(2q+1)w^{2q}\nabla w, \quad &x\in A_n,\\
&n^{2q}\nabla w, \quad &x\in B_n.
\end{aligned}
\right.
\end{equation}
and
\begin{equation} \nonumber
\nabla v_n=\left\{
\begin{aligned}
&(q+1)w^{q}\nabla w, \quad &x\in A_n,\\
&n^{q}\nabla w, \quad &x\in B_n.
\end{aligned}
\right.
\end{equation}
Then, taking $u_n$ as a test function in Eq.(\ref{proof-theorem2-22}), we get
\begin{align*}
\int\nabla w\cdot\nabla u_ndx+\omega\int wu_ndx=\mu\int\phi_w wu_ndx+\int w^{p-1}u_ndx,
\end{align*}
which and $\omega>0$ imply that
\begin{align}\label{proof-theorem2-16}
\int_{A_n}(2q+1)w^{2q}|\nabla w|^2dx+\int_{B_n}n^{2q}|\nabla w|^2dx\leq\mu\int\phi_wwu_ndx+\int w^{p-1}u_ndx.
\end{align}
Since
\begin{align}\label{proof-theorem2-17}
\int|\nabla v_n|^2dx=\int_{A_n}(q+1)^2 w^{2q}|\nabla w|^2dx+\int_{B_n}n^{2q}|\nabla w|^2dx,
\end{align}
then combining (\ref{proof-theorem2-16}) and (\ref{proof-theorem2-17}), and by Step 2 in the proof of Theorem \ref{theorem1} and Lemma \ref{L-infty}, we have
\begin{align*}
\frac{2q+1}{(q+1)^2}\int|\nabla v_n|^2dx&=(2q+1)\int_{A_n}w^{2q}|\nabla w|^2dx+\frac{2q+1}{(q+1)^2}\int_{B_n}n^{2q}|\nabla w|^2dx\\
&\leq\mu\int\phi_wwu_ndx+\int w^{p-1}u_ndx\\
&\leq\int(\mu\phi_w+w^{p-2})v^2_ndx\\
&\leq\varepsilon\int|\nabla v_n|^2dx+C(\varepsilon, w)\int v^2_ndx.
\end{align*}
Therefore, there exists $C$, depending on $\varepsilon, q, \mu, w$, such that
\begin{align}\label{proof-theorem2-18}
\int|\nabla v_n|^2dx\leq C\int v^2_ndx.
\end{align}
Moreover, since $v_n=w^{q+1}$ in $A_n$ and $v^2_n\leq w^{2(q+1)}$ on $\mathbb{R}^3$, it follows from (\ref{proof-theorem2-18}) that
\begin{align*}
\left(\int_{A_n}w^{6(q+1)}dx\right)^{\frac13}&=\left(\int_{A_n}v^6_ndx\right)^{\frac13}\\
&\leq\left(\int v^6_ndx\right)^{\frac13}\\
&\leq K^{-1}\int|\nabla v_n|^2dx\\
&\leq C\int w^{2(q+1)}dx,
\end{align*}
where $K$ is defined in (\ref{best-constant}).
Let $n\rightarrow\infty$, we infer that
\begin{align*}
\left(\int w^{6(q+1)}dx\right)^{\frac13}\leq C\int w^{2(q+1)}dx,
\end{align*}
that is, for any $q\geq2$, it holds that $w\in L^{2(q+1)}(\mathbb{R}^3)$ and
\begin{align}\label{proof-theorem2-19}
\|w\|_{L^{6(q+1)}}\leq C^{\frac1{2(q+1)}}\|w\|_{L^{2(q+1)}}.
\end{align}
In the following, we will use an iteration argument. Let $q_1$ be a positive constant such that $2(q_1+1)=6$. Noting that $w\in L^6(\mathbb{R}^3)$, by (\ref{proof-theorem2-19}), we can obtain $w\in L^{6(q_1+1)}(\mathbb{R}^3)$ and
\begin{align*}
\|w\|_{L^{6(q_1+1)}}\leq C^{\frac1{2(q_1+1)}}\|w\|_{L^{2(q_1+1)}}.
\end{align*}
Choosing $q_2$ satisfying $2(q_2+1)=6(q_1+1)$, we see that $q_2>q_1$ and $w\in L^{2(q_2+1)}(\mathbb{R}^3)$. Thus, by (\ref{proof-theorem2-19}), we get $w\in L^{6(q_2+1)}(\mathbb{R}^3)$ and
\begin{align*}
\|w\|_{L^{6(q_2+1)}}\leq C^{\frac1{2(q_2+1)}}\|w\|_{L^{2(q_2+1)}}.
\end{align*}
Continuing with this iteration, we get a sequence $\{q_n\}^{\infty}_{n=1}$, where $2(q_{n+1}+1)=6(q_n+1)$, such that $w\in L^{6(q_n+1)}(\mathbb{R}^3)$ and
\begin{align*}
\|w\|_{L^{6(q_n+1)}}\leq C^{\frac1{2(q_n+1)}}\|w\|_{L^{2(q_n+1)}}.
\end{align*}
Obviously, we proceed the $k$ times iterations that
\begin{align*}
\|w\|_{L^{6(q_k+1)}}\leq C^{\sum\limits_{j=1}\limits^{k}\frac1{2(q_k+1)}}\|w\|_{L^6}.
\end{align*}
Due to $\lim\limits_{k\rightarrow\infty}C^{\sum\limits_{j=1}\limits^{k}\frac1{2(q_k+1)}}<\infty$, there exists $C>0$, depending on $\varepsilon, q, \mu, K, w$, such that
\begin{align}\label{proof-theorem2-20}
\limsup\limits_{k\rightarrow\infty}\|w\|_{L^{6(q_k+1)}}\leq C.
\end{align}

we now claim that
\begin{align}\label{proof-theorem2-21}
\lim\limits_{k\rightarrow\infty}\|w\|_{L^{6(q_k+1)}}=\|w\|_{L^{\infty}}.
\end{align}
Indeed, if $\|w\|_{L^{\infty}}=\infty$, for any $n\in\mathbb{N}$, there is $\widetilde{\Omega}_n\subset\mathbb{R}^3$ with $|\widetilde{\Omega}_n|<\infty$ such that $w(x)>n$ for all $x\in\widetilde{\Omega}_n$. Hence, $\|w\|_{L^{6(q_k+1)}}\geq n|\widetilde{\Omega}_n|^{\frac1{6(q_k+1)}}$, it means that $\liminf\limits_{k\rightarrow\infty}\|w\|_{L^{6(q_k+1)}}\geq n$. By the arbitrariness of $n$, we have $\liminf\limits_{k\rightarrow\infty}\|w\|_{L^{6(q_k+1)}}=\infty$, which is contradicts with (\ref{proof-theorem2-20}). Thus, $\|w\|_{L^{\infty}}<\infty$. Let $k\geq1$, then $6(q_k+1)>6$ and
\begin{align*}
\left(\int w^{6(q_k+1)}dx\right)^{\frac1{6(q_k+1)}}\leq\left(\|w\|^{6q_k}_{L^{\infty}}\int w^6dx\right)^{\frac1{6(q_k+1)}}
=\|w\|^{1-\frac1{q_k+1}}_{L^{\infty}}\|w\|^{\frac1{q_k+1}}_{L^6},
\end{align*}
which implies that
$$\limsup\limits_{k\rightarrow\infty}\|w\|_{L^{6(q_k+1)}}\leq\|w\|_{L^{\infty}}.$$
Moreover, for any $\varepsilon>0$, taking $\widetilde{\Omega}_{\varepsilon}:=\{x\in\mathbb{R}^3: \|w\|_{L^{\infty}}-\varepsilon\leq w(x)\leq\|w\|_{L^{\infty}}\}$, then $|\widetilde{\Omega}_{\varepsilon}|<\infty$ and
\begin{align*}
\left(\int w^{6(q_k+1)}dx\right)^{\frac1{6(q_k+1)}}\geq(\|w\|_{L^{\infty}}-\varepsilon)
|\widetilde{\Omega}_{\varepsilon}|^{\frac1{6(q_k+1)}},
\end{align*}
which implies that
$$\liminf\limits_{k\rightarrow\infty}\|w\|_{L^{6(q_k+1)}}\geq\|w\|_{L^{\infty}}-\varepsilon.$$
Therefore, we get (\ref{proof-theorem2-21}) by the arbitrariness of $\varepsilon$. In view of (\ref{proof-theorem2-20}) and (\ref{proof-theorem2-21}), we have $\|w\|_{L^{\infty}}\leq C$. \\
\textbf{Step 2.} There exist $r>0$ and $C> 0$ such that for all $|x|\geq r$, we have
\begin{align}\label{proof-theorem2-9}
\int\frac{|w(x-z)|^2}{|z|}dz\leq C|x|^{-\frac12}.
\end{align}

In fact, for $|x|\geq r$, we divide the integral in (\ref{proof-theorem2-9}) into three regions, i.e. $\mathbb{R}^3=\Omega_1\cup\Omega_2\cup\Omega_3$, where $\Omega_1:=\left\{z: z\in\mathbb{R}^3\setminus B_{4|x|}(x)\right\}$, $\Omega_2:=\left\{z: z\in B_{4|x|}(x)\setminus B_{\frac{|x|}2}(x)\right\}$ and $\Omega_3:=\left\{z: z\in B_{\frac{|x|}2}(x)\right\}$. Then,
\begin{align}\nonumber
\int_{\mathbb{R}^3}\frac{|w(x-z)|^2}{|z|}dz&=\left(\int_{\Omega_1}+\int_{\Omega_2}+\int_{\Omega_3}\right)\frac{|w(x-z)|^2}{|z|}dz\\ \label{proof-theorem2-10}
&:=I_1+I_2+I_3.
\end{align}
For $|x|\geq r$ and $z\in\Omega_1$, we have $|z|-|x|\geq|x|$ and $|z|\geq\frac12(|x|+|z|)\geq\frac12|x-z|$. It follows from the proof of the Step 1 of Theorem \ref{theorem1} that
\begin{align}\label{proof-theorem2-11}
\|w(x-z)\|^2_{H^1}=\|w||^2_{H^1}\leq Cc.
\end{align}
By using Lemma \ref{radial-inequality} with $s=2$, the Sobolev embedding theorem and (\ref{proof-theorem2-11}), for any $\delta\in(0, 1)$, we obtain
\begin{align}\nonumber
I_1&\leq C_1\int_{\Omega_1}\frac1{|x|^{\delta}|x-z|^{1-\delta}}|x-z|^{-3}\|w(x-z)\|^2_{L^2}dz\\ \nonumber
&\leq C_1|x|^{-\delta}\int_{4|x|}^{\infty}r^{\delta-4}r^2dr\\ \label{proof-theorem2-12}
&\leq C_1|x|^{-1}.
\end{align}
For $|x|\geq r$ and $z\in\Omega_2$, we have $|z|\leq4|x|$ and $|z-x|\geq\frac{|x|}2$. By using Lemma \ref{radial-inequality} with $s=2$, the Sobolev embedding theorem and (\ref{proof-theorem2-11}), we obtain
\begin{align}\nonumber
I_2&\leq C_2\int_{\Omega_2}\frac1{|z|}\left(\frac{|x|}2\right)^{-3}\|w(x-z)\|^2_{L^2}dz\\ \nonumber
&\leq C_2|x|^{-3}\int_{0}^{4|x|}\frac1{r}r^2dr\\ \label{proof-theorem2-13}
&\leq C_2|x|^{-1}.
\end{align}
For $|x|\geq r$ and $z\in\Omega_3$, we have $|z-x|\leq\frac{|x|}2$ and $|z|\geq\frac{|x|}2$.
By using Lemma \ref{radial-inequality} with $s=\frac{12}5$, the Sobolev embedding theorem and (\ref{proof-theorem2-11}), we obtain
\begin{align}\nonumber
I_3&\leq C_3\int_{\Omega_3}\frac2{|x|}|z-x|^{-\frac52}\|w(x-z)\|^2_{L^\frac{12}5}dz\\ \nonumber
&\leq C_3|x|^{-1}\int_{0}^{\frac{|x|}2}r^{-\frac52}r^2dr\\ \label{proof-theorem2-14}
&\leq C_3|x|^{-\frac12}.
\end{align}
Substituting (\ref{proof-theorem2-12}), (\ref{proof-theorem2-13}) and (\ref{proof-theorem2-14}) into (\ref{proof-theorem2-10}), we obtain that there exists $C>0$ such that (\ref{proof-theorem2-9}) holds for all $|x|\geq r$. \\
\textbf{Step 3.} There exists a constant $M>0$ such that
$$w(x)\leq Me^{-\frac{\sqrt{\omega}}2|x|}, \quad \forall~ x\in\mathbb{R}^3.$$

Let $\vartheta\in C^{\infty}(\mathbb{R}^3, [0, 1])$ such that $|\nabla\vartheta|\leq\frac2{R-r}$ and
\begin{equation} \nonumber
\vartheta=\left\{
\begin{aligned}
&1, \quad |x|\geq R,\\
&0, \quad |x|\leq r.
\end{aligned}
\right.
\end{equation}
Here $R>1$ and $r\in(0, R)$. For all $\iota\geq2$, multiplying both sides of Eq.(\ref{proof-theorem2-22}) by $\varphi:=\vartheta^2 w^{2\iota+1}\in H^1(\mathbb{R}^3)$ and integrate on $\mathbb{R}^3$, we get
\begin{align}\label{proof-theorem2-23}
\int\nabla w\cdot\nabla\varphi dx+\omega\int w\varphi dx=\mu\int\phi_ww\varphi dx+\int w^{p-1}\varphi dx.
\end{align}
Since $\omega>0$ and $\nabla\varphi=(2\iota+1)\vartheta^2 w^{2\iota}\nabla w+2\vartheta w^{2\iota+1}\nabla\vartheta$, then from (\ref{proof-theorem2-23}) we have
\begin{align}\nonumber
\int(2\iota+1)\vartheta^2 w^{2\iota}|\nabla w|^2dx&\leq\bigg|\int 2\vartheta w^{2\iota+1}\nabla\vartheta\cdot\nabla wdx\bigg|+\mu\int\phi_ww\varphi dx+\int w^{p-1}\varphi dx\\
\label{proof-theorem2-24}
&:=L_1+L_2+L_3.
\end{align}
For $L_1$, from Young inequality, we obtain
\begin{align*}
L_1&=\bigg|\int 2\vartheta w^{2\iota+1}\nabla\vartheta\cdot\nabla wdx\bigg|\\
&=\bigg|\int \vartheta w^{\iota}\nabla w\cdot2w^{\iota+1}\nabla\vartheta dx\bigg|\\
&\leq\frac12\int\vartheta^2 w^{2\iota}|\nabla w|^2dx+\int w^{2(\iota+1)}|\nabla\vartheta|^2dx\\
&\leq\frac12\int\vartheta^2 w^{2\iota}|\nabla w|^2dx+\frac4{(R-r)^2}\int_{|x|\geq r}w^{2(\iota+1)}dx.
\end{align*}
For $L_2$, by Step 2, we have
\begin{align*}
\lim\limits_{|x|\rightarrow\infty}\phi_w(x)\leq\lim\limits_{|x|\rightarrow\infty}\int\frac{|w(x-z)|^2}{|z|}dz=0.
\end{align*}
Thus,
\begin{align*}
L_2=\mu\int\phi_ww\varphi dx\leq C\int_{|x|\geq r}w^{2(\iota+1)}dx.
\end{align*}
For $L_3$, by Step 1, we get
\begin{align*}
L_3=\int w^{p-1}\varphi dx\leq C\int_{|x|\geq r}w^{2(\iota+1)}dx.
\end{align*}
Then, from (\ref{proof-theorem2-24}), there exists a positive constant $C>0$ such that
\begin{align}\label{proof-theorem2-25}
\left(2\iota+\frac12\right)\int\vartheta^2 w^{2\iota}|\nabla w|^2dx\leq C\left[1+\frac4{(R-r)^2}\right]\int_{|x|\geq r}w^{2(\iota+1)}dx.
\end{align}
Moreover, for all $\iota>0$, by using the Sobolev embedding theorem and (\ref{proof-theorem2-25}), we have
\begin{align*}
\left(\int_{|x|\geq R}(w^{\iota+1})^6dx\right)^{\frac13}&\leq\left(\int(\vartheta w^{\iota+1})^6dx\right)^{\frac13}\\
&\leq K^{-1}\int\big|\nabla(\vartheta w^{\iota+1})\big|^2dx\\
&=K^{-1}\int\big|w^{\iota+1}\nabla\vartheta+(\iota+1)\vartheta w^{\iota}\nabla w\big|^2dx\\
&\leq C\int\left(w^{2(\iota+1)}|\nabla\vartheta|^2+(\iota+1)^2\vartheta^2 w^{2\iota}|\nabla w|^2\right)dx\\
&\leq C\frac4{(R-r)^2}\int_{|x|\geq r}w^{2(\iota+1)}dx+C(\iota+1)\left[1+\frac4{(R-r)^2}\right]\int_{|x|\geq r}w^{2(\iota+1)}dx\\
&\leq C(\iota+1)\left[1+\frac4{(R-r)^2}\right]\int_{|x|\geq r}w^{2(\iota+1)}dx,
\end{align*}
which implies that
\begin{align}\label{proof-theorem2-26}
\|w\|_{L^{6(\iota+1)}(|x|\geq R)}\leq[C(\iota+1)]^{\frac1{2(\iota+1)}}\left[1+\frac4{(R-r)^2}\right]^{\frac1{2(\iota+1)}}
\|w\|_{L^{2(\iota+1)}(|x|\geq r)}.
\end{align}
In order to use the Moser iteration, let $\iota_1=2$, $\iota_{n+1}+1=3(\iota_n+1)$, $r_n=R-\frac{R}{2^{n+1}}$, then, from (\ref{proof-theorem2-26}) we get
\begin{align}\nonumber
\|w\|_{L^{6(\iota_n+1)}(|x|\geq R)}\leq&\|w\|_{L^{6(\iota_n+1)}(|x|\geq r_n)}\\ \nonumber
\leq&[C(\iota_n+1)]^{\frac1{2(\iota_n+1)}}\left[1+\frac4{(r_n-r_{n-1})^2}\right]^{\frac1{2(\iota_n+1)}}
\|w\|_{L^{2(\iota_n+1)}(|x|\geq r_{n-1})}\\ \nonumber
\leq&[C(\iota_n+1)]^{\frac1{2(\iota_n+1)}}\left(2\times4^{n+2}\right)^{\frac1{2(\iota_n+1)}}
\|w\|_{L^{6(\iota_{n-1}+1)}(|x|\geq r_{n-1})}\\ \nonumber
\leq&[C(\iota_n+1)]^{\frac1{2(\iota_n+1)}}\left(2\times4^{n+2}\right)^{\frac1{2(\iota_n+1)}}
[C(\iota_{n-1}+1)]^{\frac1{2(\iota_{n-1}+1)}}\\ \nonumber
&\left(2\times4^{n+1}\right)^{\frac1{2(\iota_{n-1}+1)}}\|w\|_{L^{2(\iota_{n-1}+1)}(|x|\geq r_{n-2})}\\ \nonumber
&\cdots\\
\label{proof-theorem2-27}
\leq&(2C)^{\sum\limits_{j=1}\limits^{n}\frac1{2(\iota_j+1)}}2^{\sum\limits_{j=1}\limits^{n}
\frac{j+2}{\iota_j+1}}\prod\limits_{j=1}\limits^{n}(\iota_j+1)^{\frac1{2(\iota_j+1)}}
\|w\|_{L^6(|x|\geq r_0)}.
\end{align}
Since $\iota_1=2$, $\iota_{n+1}+1=3(\iota_n+1)$, we can see that $\iota_n+1=3^n$ for all $n\in\mathbb{N}$. Then,
\begin{align*}
\lim\limits_{n\rightarrow\infty}(2C)^{\sum\limits_{j=1}\limits^{n}\frac1{2(\iota_j+1)}}
=\lim\limits_{n\rightarrow\infty}(2C)^{\frac12\sum\limits_{j=1}\limits^{n}\frac1{3^j}}<\infty,
\end{align*}
\begin{align*}
\lim\limits_{n\rightarrow\infty}\prod\limits_{j=1}\limits^{n}(\iota_j+1)^{\frac1{2(\iota_j+1)}}
=\lim\limits_{n\rightarrow\infty}\prod\limits_{j=1}\limits^{n}(3^j)^{\frac1{2\times 3^j}}=\lim\limits_{n\rightarrow\infty}3^{\frac12\sum\limits_{j=1}\limits^{n}\frac{j}{3^j}}<\infty,
\end{align*}
and
\begin{align*}
\lim\limits_{n\rightarrow\infty}2^{\sum\limits_{j=1}\limits^{n}\frac{j+2}{\iota_j+1}}
=\lim\limits_{n\rightarrow\infty}2^{\sum\limits_{j=1}\limits^{n}\frac{j+2}{3^j}}<\infty.
\end{align*}
Therefore, letting $n\rightarrow\infty$ in (\ref{proof-theorem2-27}), similar to (\ref{proof-theorem2-21}), we can conclude that
$$\|w\|_{L^{\infty}(|x|\geq R)}\leq C\|w\|_{L^6(|x|\geq\frac{R}2)}.$$
Thus, for any $\varepsilon>0$ fixed, choosing $R>1$ large enough one infers $\|w\|_{L^{\infty}(|x|\geq R)}\leq\varepsilon$. This shows that
\begin{align}\label{proof-theorem2-28}
w(x)\rightarrow0 \quad\hbox{as}~ |x|\rightarrow\infty.
\end{align}

Now, for any $\mu>0$, by (\ref{proof-theorem2-28}) and the fact that $\omega>0$, there exists $\widetilde{R}>0$ large enough such that
\begin{align*}
-\Delta w=(-\omega+\mu\phi_w+w^{p-2})w\leq-\frac{\omega}4w, \quad\hbox{for all}~ |x|>\widetilde{R}.
\end{align*}
Let $\zeta(x)=M_1e^{-\frac{\sqrt{\omega}}2|x|}$, where $M_1>0$ satisfies
\begin{align*}
M_1e^{-\frac{\sqrt{\omega}}2\widetilde{R}}\geq w(x), \quad\hbox{for}~ |x|=\widetilde{R}.
\end{align*}
By simple calculation, we have $\Delta\zeta\leq\frac{\omega}4\zeta$ for all $x\neq0$. Set $\nu:=\zeta-w$, we get
\begin{equation} \nonumber
\left\{
\begin{aligned}
&-\Delta\nu+\frac{\omega}4\nu\geq0, \quad &|x|>\widetilde{R},\\
&\nu(x)\geq0, \quad &|x|=\widetilde{R},\\
&\lim\limits_{|x|\rightarrow\infty}\nu(x)=0.
\end{aligned}
\right.
\end{equation}
Thus, it follows from maximum principle that $\nu\geq0$, that is,
\begin{align*}
w(x)\leq M_1e^{-\frac{\sqrt{\omega}}2|x|}, \quad\hbox{for all}~ |x|\geq\widetilde{R}.
\end{align*}
Moreover, since $w$ is continuous function, there exists $M_2>0$ such that
\begin{align*}
w(x)e^{\frac{\sqrt{\omega}}2|x|}\leq M_2, \quad\hbox{for all}~ |x|\leq\widetilde{R}.
\end{align*}
Hence, letting $M:=\max\{M_1, M_2\}>0$, we obtain
\begin{align*}
w(x)\leq Me^{-\frac{\sqrt{\omega}}2|x|}, \quad\hbox{for all}~ x\in\mathbb{R}^3.
\end{align*}
The proof is complete.
\end{proof}

\renewcommand{\theequation}
{\thesection.\arabic{equation}}
\setcounter{equation}{0}
\section{Asymptotic behavior} \noindent

In this section, we consider the asymptotic behavior of the solutions to Eq.(\ref{main2}) with respect to $a>0$ and prove Theorem \ref{theorem3}.

\begin{proof}[Proof of Theorem \ref{theorem3}]
Let $\{u^a, \phi^a\}\subset H^1_r(\mathbb{R}^3)\times\mathcal{D}_r$ be the family of the solutions of Eq.(\ref{main2}), where we are using the notation
$$\phi^a=\phi^a_{u^a}=\mathcal{K}\ast|u^a|^2=\int\frac{1-e^{-\frac{|x-y|}a}}{|x-y|}|u^a(y)|^2dy.$$
In contrast to the previous sections we use the explicit dependence on $a$ also in the functional. Then, for each $a>0$, the functions $\{u^a\}$ are critical point of $J$ at the mountain-pass value $c>0$. Thus,
$$\langle J'(u^a), u^a\rangle=0, \quad c=J(u^a).$$
In view of (\ref{proof-theorem1-5}) and (\ref{proof-theorem1-6}), we have that $\{u^a\}$ is bounded in $H^1_r(\mathbb{R}^3)$. Therefore, there exists $u^0\in H^1_r(\mathbb{R}^3)$ such that, up to subsequences,
\begin{equation} \nonumber
\left\{
\begin{aligned}
&u^a\rightharpoonup u^0 \quad\hbox{in}~~ H_r^1(\mathbb{R}^3),\\
&u^a\rightarrow u^0 \quad\hbox{in}~~ L^q(\mathbb{R}^3), ~\forall~ q\in(2, 6),\\
&u^a\rightarrow u^0 \quad\hbox{a.e. in}~~ \mathbb{R}^3,
\end{aligned}
\right.
\end{equation}
as $a\rightarrow0$.
In particular
$$(u^a)^2\rightarrow(u^0)^2 ~~\hbox{in}~~ L^{\frac65}(\mathbb{R}^3), ~~ \hbox{as}~~a\rightarrow0.$$
By (2) of Lemma \ref{asymptotic-lemma}, we have $\phi^a\rightarrow\phi^0$ in $D_r^{1, 2}(\mathbb{R}^3)$ and $a\Delta\phi^a\rightarrow0$ in $L_r^2(\mathbb{R}^3)$ as $a\rightarrow0$, where $\phi^0\in D^{1, 2}(\mathbb{R}^3)$ is the unique solution of $-\Delta\phi=4\pi|u^0|^2$
in $\mathbb{R}^3$.

For any $\varphi\in C^{\infty}_c(\mathbb{R}^3)$, we can have
\begin{align}\label{proof-theorem3-1}
\int\nabla u^a\cdot\nabla\varphi dx+\omega\int u^a\varphi dx-\mu\int\phi^au^a\varphi dx=\int|u^a|^{p-2}u^a\varphi dx.
\end{align}
By standard arguments, as $a\rightarrow0$, we obtain
\begin{align}\label{proof-theorem3-2}
\int\nabla u^a\cdot\nabla\varphi dx\rightarrow\int\nabla u^0\cdot\nabla\varphi dx,
\end{align}
\begin{align}\label{proof-theorem3-3}
\int u^a\varphi dx\rightarrow\int u^0\varphi dx,
\end{align}
\begin{align}\label{proof-theorem3-4}
\int|u^a|^{p-2}u^a\varphi dx\rightarrow\int|u^0|^{p-2}u^0\varphi dx.
\end{align}
Moreover, since $\phi^a\rightarrow\phi^0$ in $L^6(\mathbb{R}^3)$, $u^a\rightarrow u^0$ in $L^{\frac{12}5}(\mathbb{R}^3)$ as $a\rightarrow0$. Using the H\"{o}lder inequality, we get
\begin{align}\label{proof-theorem3-5}
\int\phi^a u^a\varphi dx\rightarrow\int\phi^0 u^0\varphi dx.
\end{align}
Then, by (\ref{proof-theorem3-2})-(\ref{proof-theorem3-5}), we have
\begin{align}\label{proof-theorem3-6}
\int\nabla u^0\cdot\nabla\varphi dx+\omega\int u^0\varphi dx-\mu\int\varphi^0u^0\varphi dx=\int|u^0|^{p-2}u^0\varphi dx,
\end{align}
which shows that $(u^0, \phi^0)$ is a solution of Eq.(\ref{main3}).

In the sequel, we prove that $u^a\rightarrow u^0$ in $H^1_r(\mathbb{R}^3)$ as $a\rightarrow0$. Taking $\varphi=u^a$ and $\varphi=u^0$ respectively in (\ref{proof-theorem3-1}), we obtain
\begin{align}\nonumber
\int&\nabla u^a\cdot\nabla(u^a-u^0)dx+\omega\int u^a(u^a-u^0)dx-\mu\int\phi^au^a(u^a-u^0)dx\\ \label{proof-theorem3-7}
&=\int|u^a|^{p-2}u^a(u^a-u^0)dx.
\end{align}
Similarly, taking $\varphi=u^a$ and $\varphi=u^0$ respectively in (\ref{proof-theorem3-6}), then
\begin{align}\nonumber
\int&\nabla u^0\cdot\nabla(u^a-u^0)dx+\omega\int u^0(u^a-u^0)dx-\mu\int\phi^0u^0(u^a-u^0)dx\\ \label{proof-theorem3-8}
&=\int|u^0|^{p-2}u^0(u^a-u^0)dx.
\end{align}
By the H\"{o}lder inequality, we have
\begin{align}\label{proof-theorem3-9}
\int\phi^au^a(u^a-u^0)dx\leq\|\phi^a\|_{L^6}\|u^a\|_{L^{\frac{12}5}}\|u^a-u^0\|_{L^{\frac{12}5}}\rightarrow0
\end{align}
and
\begin{align}\label{proof-theorem3-10}
\int|u^a|^{p-2}u^a(u^a-u^0)dx\leq\|u^a\|^{p-2}_{L^p}\|u^a\|_{L^p}\|u^a-u^0\|_{L^p}\rightarrow0,
\end{align}
as $a\rightarrow0$. Similarly, we easily get
\begin{align}\label{proof-theorem3-11}
\int\phi^0u^0(u^a-u^0)dx\rightarrow0 \quad\hbox{and}\quad
\int|u^0|^{p-2}u^0(u^a-u^0)dx\rightarrow0, ~~\hbox{as}~~ a\rightarrow0.
\end{align}
Thus, from (\ref{proof-theorem3-7})-(\ref{proof-theorem3-11}), we have
\begin{align*}
\int|\nabla(u^a-u^0)|^2dx+\omega\int|u^a-u^0|^2dx\rightarrow0, ~~\hbox{as}~~ a\rightarrow0.
\end{align*}
It follows from $\omega>0$ that $u^a\rightarrow u^0$ in $H^1_r(\mathbb{R}^3)$. The proof of Theorem \ref{theorem3} is finished.
\end{proof}

 \par

\end{document}